\documentclass[a4paper]{article}
\pdfoutput=1
\usepackage[margin=1in]{geometry}
\usepackage[utf8]{inputenc}
\usepackage[dvipsnames]{xcolor}
\usepackage[bookmarksnumbered,linktocpage,hypertexnames=false,colorlinks=true,linkcolor=NavyBlue,urlcolor=NavyBlue,citecolor=ForestGreen,anchorcolor=green,breaklinks=true,pdfusetitle]{hyperref}
\usepackage{bbm,braket,microtype,amsmath,amssymb,amsthm,amsfonts,dsfont,dynkin-diagrams,mathtools,colortbl,booktabs,graphicx,enumitem,xspace,float,mathdots,ellipsis,caption,subcaption,mleftright,multirow,adjustbox}
\usepackage[T1]{fontenc}
\usepackage{textcomp}
\usepackage[english]{babel}
\usepackage{thmtools} \usepackage[capitalize,nameinlink]{cleveref}
\usepackage[dvipsnames]{xcolor}
\usepackage[boxsize=0.2em]{ytableau} 

\usepackage{tikz}

\newtheorem*{theorem*}{Theorem}
\newtheorem{theorem}{Theorem}[section]\crefname{theorem}{Theorem}{Theorems}
\newtheorem{lemma}[theorem]{Lemma}\crefname{lemma}{Lemma}{Lemmas}
\newtheorem{claim}[theorem]{Claim}\crefname{claim}{Claim}{Claims}
\newtheorem{proposition}[theorem]{Proposition}\crefname{proposition}{Proposition}{Propositions}
\crefname{observation}{Observation}{Observations}
\newtheorem{corollary}[theorem]{Corollary}\crefname{corollary}{Corollary}{Corollaries}
\crefname{conjecture}{Conjecture}{Conjecture}
\theoremstyle{definition}
\newtheorem{definition}[theorem]{Definition}\crefname{definition}{Definition}{Definitions}
\newtheorem{problem}[theorem]{Problem}\crefname{problem}{Problem}{Problems}
\newtheorem{remark}[theorem]{Remark}\crefname{remark}{Remark}{Remarks}
\crefname{example}{Example}{Examples}
\crefname{condition}{Condition}{Conditions}
\numberwithin{equation}{section}

\AddToHook{cmd/appendix/before}{\crefalias{section}{appendix}}

\usepackage{accents}
\DeclareMathAccent{\wtilde}{\mathord}{largesymbols}{"65}
\DeclareMathOperator{\asymR}{\underaccent{\wtilde}{R}}

\DeclareMathOperator{\trank}{R}

\DeclareMathOperator{\slrank}{SR}

\DeclareMathOperator{\GL}{GL}

\DeclareMathOperator{\Lie}{Lie}

\DeclareMathOperator{\rank}{rank}

\DeclareMathOperator{\SL}{SL}

\DeclareMathOperator{\supp}{supp}
\DeclareMathOperator{\SU}{SU}

\DeclareMathOperator{\U}{U}

\DeclareMathOperator{\End}{End}

\newcommand{\Proj}{\mathbb{P}}
\newcommand{\R}{\mathbb{R}}
\newcommand{\C}{\mathbb{C}}
\newcommand{\Z}{\mathbb{Z}}

\newcommand{\N}{\mathbb{N}}
\newcommand{\F}{\mathbb{F}}

\newcommand{\eps}{\varepsilon}

\DeclarePairedDelimiter\abs{\lvert}{\rvert}
\DeclarePairedDelimiter\norm{\lVert}{\rVert}

\DeclarePairedDelimiter\parens{\lparen}{\rparen}

\newcommand{\ot}{\otimes}

\newcommand{\dists}{\mathcal{P}}
\newcommand{\distsnc}{\dists_{\mathrm{lam}}}
\newcommand{\distssing}{\dists_{\mathrm{s}}}
\newcommand{\bipartitions}{\mathcal{B}}

\newcommand{\unit}[1]{\mathsf{U}_{\smash{#1}}}
\newcommand{\unitk}[2]{\mathsf{U}_{#1}^{#2}}
\newcommand{\W}{\mathsf{W}}
\newcommand{\Werner}{\mathsf{Q}} \newcommand{\degenleq}{\trianglelefteq}
\newcommand{\degengeq}{\trianglerighteq}
\newcommand{\asympleq}{\lesssim}
\newcommand{\asympgeq}{\gtrsim}

\DeclareMathOperator{\capa}{cap}

\newcommand{\mat}[3]{\langle{#1},{#2},{#3}\rangle}

\newcommand{\T}{\mathcal{T}}
\newcommand{\Tnc}{\T_{\mathrm{lq}}}
\newcommand{\semiring}{\mathcal{R}}
\newcommand{\asympspec}{\mathcal{X}}
\newcommand{\kron}{\mathop{\raisebox{-0.25ex}{$\boxtimes$}}}
\usepackage{scalerel}

\allowdisplaybreaks[4]
\usepackage[inline, final]{showlabels}
\renewcommand{\showlabelfont}{\ttfamily\scriptsize}
\def\showlabelsetlabel#1{\raise0ex\hbox{\showlabelfont \textcolor{blue}{#1}}}

\setlist[enumerate]{label=(\roman*),ref=(\roman*)}

\newif\ifanonymous
\anonymousfalse

\usepackage{authblk}
\title{On quantum functionals for higher-order tensors}
\ifanonymous \author{anonymous}
\else
\author[1]{Alonso Botero}
\author[2]{Matthias Christandl} 
\author[2]{Thomas C. Fraser} 
\author[3]{Itai Leigh} 
\author[2]{\texorpdfstring{Harold~Nieuwboer}{Harold Nieuwboer}}
\affil[1]{Departamento de F{\'i}sica, Universidad de los Andes, Bogot{\'a}, Colombia}
\affil[2]{Department of Mathematical Sciences, University of Copenhagen, Copenhagen, Denmark}
\affil[3]{Blavatnik School of Computer Science and AI, Tel-Aviv University, Tel-Aviv, Israel}
\fi
\date{}

\begin{document}

\maketitle

\begin{abstract}
Upper and lower quantum functionals, introduced by Christandl, Vrana and Zuiddam (STOC 2018, J. Amer. Math. Soc. 2023), are families of monotone functions of tensors indexed by a weighting on the set of subsets of the tensor legs. Inspired by quantum information theory, they were crafted as obstructions to asymptotic tensor transformations, relevant in algebraic complexity theory. For tensors of order three, and more generally for weightings on singletons for higher-order tensors, the upper and lower quantum functionals coincide and are spectral points in Strassen's asymptotic spectrum.
Moreover, the singleton quantum functionals characterize the asymptotic slice rank, whereas general weightings provide upper bounds on asymptotic partition rank.
It has been an open question whether the upper and lower quantum functionals also coincide for other cases, or more generally, how to construct further spectral points, especially for higher-order tensors. 

In this work, we show that upper and lower quantum functionals generally do not coincide, but that they anchor new spectral points. With this we mean that there exist new spectral points, which equal the quantum functionals on the set of tensors on which upper and lower coincide. The set is shown to include embedded three-tensors and W-like states and concerns all laminar weightings, significantly extending the singleton case. 
\end{abstract}

\section{Introduction}

\subsection{Background and motivation}
The superstar of linear algebra is the linear operation $\psi\colon U\to V$ for $V,U$ finite dimensional spaces over a field $\F$, or in its concrete embodiment (by choosing bases), the matrix $T_{ij}=(T(e_i))_j$. 
A linear operation $\psi$ can also be considered as a bilinear functional $(v,u)\mapsto v^t \psi u\in\F$. 
As linear operations associate two vector spaces, or equivalently, as matrices have two indices, they constitute $2$-tensors. 
Tensors generalize matrices into multidimensional arrays of numbers with $k$ indices, known as $k$-tensors, which represent $k-1$-linear operations or $k$-linear functionals. 
Tensors have found applications in various fields including 
algebraic complexity, additive combinatorics, data compression and quantum information. 
In particular, the case of $3$-tensors has received significant attention, led by the desire to understand the computational aspects of bilinear operations such as polynomial multiplication~\cite{karatsuba1960multiplication} and matrix multiplication~\cite{Strassen1969,WINOGRAD1971lowerbound}. 
The algebraic complexity of these bilinear operations, meaning the number of arithmetic operations in the field (and specifically multiplication) required for their computation, has been shown to be tightly related to a parameter, $\trank(\psi)$, generalizing the rank of matrices, called tensor rank~\cite[\S 4]{Strassen1973vermeidung}. The asymptotic algebraic complexity of these bilinear operations is therefore related to the asymptotic regularization of tensor rank, called asymptotic tensor rank $\asymR(\psi)=\lim_{n\to\infty}(\trank(\psi^{\boxtimes n}))^{1/n}$, where $\psi^{\boxtimes n}$ denotes taking the $n$-fold Kronecker product of $n$ copies of $\psi$.\footnote{
For two $k$-tensors $\psi=\sum_{i_1,\dots,i_k}\psi_{i_1,\dots,i_k}e_{i_1}\otimes \cdots\otimes e_{i_k}$ and $\varphi=\sum_{j_1,\dots,,j_k}\varphi_{j_1,\dots,j_k}e_{j_1}\otimes \cdots\otimes e_{j_k}$, their Kronecker product is the $k$-tensor $\psi\boxtimes \varphi=\sum_{i_1,\dots,i_k,j_1,\dots,j_k}\psi_{i_1,\dots,i_k}\cdot \varphi_{j_1,\dots,j_k}(e_{i_1}\otimes e_{j_1})\otimes\cdots\otimes(e_{i_k}\otimes e_{j_k})$.}

While the correspondence between tensor rank and algebraic complexity has been shown to break for higher order tensors (where $k>3$)~\cite{brand2026bilinearcomplexityworksbreaks}, a more general viewpoint does generalize: restrictions and asymptotic restrictions of tensors are single-use and asymptotic kinds of reductions between $(k-1)$-linear operations appropriate for algebraic complexity, when linear operations on either each input vector or the output vector are free. The tensor rank and its asymptotic regularization in particular describe the minimal number of independent $k$-set-multilinear products needed to compute $\psi$ as a $k$-linear functional, either for one computation of $\psi$ or per-copy of $\psi$ in a concatenated computation of it, $\psi^{\kron n}$. 
In other words, the tensor rank of a tensor $\psi$ and its asymptotic regularization encode the possibility of obtaining $\psi$ from the distinguished family of unit tensors $\unitk{r}{k}=\sum_{i=1}^re_i^{\ot k}$.
Specifically, a $k$-tensor $\psi$ \textit{restricts} to a $k$-tensor $\varphi$, denoted $\psi \geq \varphi$, when there exist linear transformations $M_1,\ldots,M_k$ such that $(M_1\otimes\cdots\otimes M_k)\psi=\varphi$.
From this perspective, the tensor rank $\trank(\psi)$ of a $k$-tensor measures the smallest positive integer $r$ such that $\unitk{r}{k}$ restricts to $\psi$.
The restriction relation between tensors admits of an asymptotic regularization known as \textit{asymptotic restriction}.
A $k$-tensor $\psi$ \textit{asymptotically restricts} to a $k$-tensor $\varphi$, denoted $\psi \asympgeq \varphi$, when there exist sequences of linear transformations $\{M_{1,i},\ldots,M_{k,i}\}_{i\in\N}$ such that $(M_{1,i}\otimes\cdots\otimes M_{k,i})\psi^{\kron n+o(n)}=\varphi^{\boxtimes n}$.\footnotemark{}
Both non-asymptotic and asymptotic restrictions partially order the space of all $k$-tensors by their computational resource.
\footnotetext{While the asymptotic and non-asymptotic restriction partial orders coincide for matrices, 
they are distinct for $k$-tensors when $k\geq3$.
For example, the unit tensor $\unitk{2}{3}$, also known as the Greenberger-Horn-Zeiliger tensor in quantum information, is incomparable to $W_3=e_2\otimes e_1\otimes e_1+e_1\otimes e_2\otimes e_1+e_1\otimes e_1\otimes e_2$ under the restriction order~\cite{dur2000threeQubitsCanBeEntangled}, meaning $\unitk{2}{3}\not\geq W_3$ and $\unitk{2}{3}\not\leq W_3$, while under the asymptotic restriction partial order they become comparable, specifically $\unitk{2}{3}\asympgeq W_3$ and $\unitk{2}{3}\not\asympleq W_3$.}

In additive combinatorics, $k$-tensors can encode (weighted) $k$-uniform hypergraphs. Using such an encoding for $k=3$ and a different generalization of matrix rank, called slice rank, and its asymptotic regularization, Tao reproved Ellenberg's and Gijswijt's bounds for sizes of cap sets~\cite{ellenbergLargeSubsets2017,tao2016capsetSymmetric,taoSawin2016onSliceRank}. The slice rank $\slrank(\psi)$ of a $k$-tensor $\psi$ is the minimal number of slices that sum to it, where a slice is a tensor product $v\otimes u$ where $v\in V_i$ is a vector in one of the local vector spaces and $u\in\bigotimes_{j\in[k]\setminus i}V_j$ is an arbitrary $k-1$-tensor on the rest. On $3$-tensors slice rank identifies with another useful parameter, which differs from it for $k\geq4$: the partition rank. This parameter is defined similarly, but allows in its decompositions more than just slices -- it allows partition tensors, $v\otimes u$ for any $\ell$-tensor $v$ and $(k-\ell)$-tensor $u$, for any $1\leq\ell\leq k-1$. The difference between these parameters is under active investigation, see e.g.~\cite{lampert2026sliceRankITCS}.\footnote{Beyond the single-shot parameters, Lampert and Moshkovitz consider asymptotic questions that are incompatible with the framework of this paper, namely taking $k\to\infty$.}

From the perspective of quantum information theory, normalized complex $k$-tensors coincide with the space of (pure) states for $k$-partite quantum systems, and the restriction preorder between tensors coincides with the resource theory of $k$-partitite \textit{entanglement} under stochastic local operations and classical communication (SLOCC) \cite{bennett2000exact,christandl2024tensor}.
Moreover, as there is a foundational interest in quantum information theory to understand and quantify \textit{genuine} $k$-partite entanglement, i.e., $k$-partite entanglement which cannot be generated using $(k-1)$-partite entanglement \cite{verstraete2002four,schmid2023understanding},
there is a parallel interest in understanding the resource theories of $k$-tensors which are \textit{genuine} in the sense they go beyond those pertaining to $(k-1)$-tensors.

To better understand the space of $k$-tensors under the asymptotic restriction partial order, Strassen developed the theory known as the asymptotic spectrum of tensors~(\cite{strassen1986spectrum} FOCS 1986, \cite{Strassen1987relative,Strassen1988spectrum,Strassen1991degeneration}). 
Central to Strassen's theory is to consider functions $F$ on the space of $k$-tensors, denoted by $\T^{k}$, which preserve their algebraic structure and restriction relations.
Specifically, a function $F\colon \T^k\to\R$ is said to be a restriction \textit{monotone} if $\psi\geq \varphi$ implies $F(\psi)\geq F(\varphi)$, multiplicative over Kronecker products if $F(\psi\boxtimes \varphi)=F(\psi) F(\varphi)$, additive over direct sums if $F(\psi \oplus \varphi) = F(\psi) + F(\varphi)$, and normalized if $F(\unitk{r}{k})=r$. 
These properties of $F$ can be concisely expressed by saying $F\colon \T^k\to\R_{\geq0}$ is a homomorphism of unital preordered semirings. 
A function $F$ satisfying these properties are called universal spectral points, and the space of all universal spectral points, also called the \textit{asymptotic spectrum} of $k$-tensors, is denoted by $\asympspec(\T^k)$. 
The fundamental theorem of Strassen's theory is a duality between asymptotic restriction and universal spectral points: $\psi\asympgeq \varphi$ if and only if $\forall F \in \asympspec(\T^k):\;F(\psi)\geq F(\varphi)$.
One consequence of this duality is a characterization of asymptotic tensor rank as a maximization over the asymptotic spectrum, $\asymR(\psi)=\max_{F\in\asympspec(\T^{k})} F(\psi)$.

Strassen established the duality in greater generality. In particular, for any unital sub-semiring $\T'\subset\T^k$ (i.e. $\T'$ is closed under Kronecker products and direct sums, and includes the unit tensors), denote the set of unital semiring homomorphisms $F'\colon \T'\to\R_{\geq0}$ preserving the restriction partial order by $\asympspec(\T')$ and call them spectral points for $\T'$. Then for any $\psi,\varphi\in\T'$, $\psi\asympgeq \varphi$ iff $\forall F'\in\asympspec(\T'), F'(\psi)\geq F'(\varphi)$. Moreover, any asymptotic spectral point $F'\in\asympspec(\T')$ has an extension $F\in\asympspec(\T^k)$ (i.e. $F|_{\T'}=F'$).

An example of a universal spectral point is a flattening rank: for a non-empty proper subset $S\subset[k]$, the flattening rank $\trank_S(\psi)$ of a $k$-tensor $\psi\in\F^{d_1}\otimes\cdots\otimes \F^{d_k}$, is the matrix rank of $\psi$ as a linear transformation $\psi_S:\otimes_{i\in [k] \setminus S}(\F^{d_i})^*\to\otimes_{i\in S}\F^{d_i}$. 

Strassen also constructed restriction monotones without establishing universality for them~\cite{Strassen1991degeneration}, showing they serve as spectral points for the sub-semiring of \textit{oblique} $3$-tensors, which includes the $2\times 2$ matrix multiplication tensor $\mat222$. 
The construction of these spectral points depends on two parameterized functions, called upper and lower support functionals, denoted $\zeta^\theta$ and $\zeta_\theta$ respectively, and parameterized by a probability distribution over the local vector spaces, $\theta\in\dists([3])$. 
For each parameter $\theta$, the upper support functional upper-bounds the lower support functional $\zeta^\theta(\psi)\geq\zeta_\theta(\psi)$, justifying their names.
Both support functionals are normalized, restriction monotones, and the upper support functional was also shown to be additive $\zeta^\theta(\psi\oplus \varphi) = \zeta^\theta(\psi) + \zeta^\theta(\varphi)$. Meanwhile, the upper support functionals were originally only shown to be sub-multiplicative $\zeta^\theta(\psi\kron \varphi)\leq\zeta^\theta(\psi)\cdot\zeta^\theta(\varphi)$ (and were very recently shown to be multiplicative over the complex numbers~\cite{sakabe2026strassen}), and the lower support functionals are generally super-additive $\zeta_\theta(\psi\oplus \varphi)\geq\zeta_\theta(\psi)+\zeta_\theta(\varphi)$ and super-multiplicative $\zeta_\theta(\psi\kron \varphi)\geq\zeta_\theta(\psi)\cdot\zeta_\theta(\varphi)$. 
Consequently, whenever the upper and lower support functionals agree, they are additive and multiplicative, making them spectral points. Strassen called a tensor $\psi$ \emph{robust} if for every $\theta\in\dists([3])$, $\zeta^\theta(\psi)=\zeta_\theta(\psi)$. We will refer to these tensors as \emph{support-robust}. Strassen showed that oblique tensors are support-robust, making the support functionals spectral points for the sub-semiring of oblique tensors. On the other hand, B{\"u}rgisser showed that over algebraically closed fields the lower support functionals of distributions with full support are strictly super-additive (on some tensors), implying they can not agree with the upper support functionals on all tensors, i.e. not all tensors are support-robust, and more importantly that they are not universal spectral points~\cite[\S 3]{Burgisser1990phd}.

Following the recipe of constructing pairs of upper and lower functionals, Christandl, Vrana and Zuiddam recently developed a notion of upper $F^{\theta}$ and lower $F_{\theta}$ \textit{quantum} functionals for complex tensors (STOC 2018 \cite{cvz18universalSTOC}, J. Amer. Math. Soc. 2023 \cite{cvz23universal}).
The parameter $\theta$ here is a probability distribution over the set of bipartitions of the $k$ local vector spaces,
where a bipartition is complementary disjoint pair of subsets $\{S,\overline{S}\}=\{S,[k]\setminus S\}$ for $\emptyset\neq S\subsetneq[k]$, also denoted by $S|\overline{S}$.
This distribution yields a certain interpolation between the flattening ranks across the bipartitions: if $\theta$ is supported only on a single bipartition, the upper quantum functional evaluates to the flattening rank across this bipartition.
The breakthrough contribution of Christandl, Vrana and Zuiddam was to prove, for all distributions $\theta$ supported on \textit{singleton} bipartitions only\footnote{A singleton bipartition or singleton-vs-the-rest bipartition is a bipartition of the form $S=\{i\}$ and $\overline{S} = [k]\setminus \{i\}$.}, that $F^{\theta}$ and $F_{\theta}$ coincide for all tensors and therefore they constitute \textit{universal} spectral points.
The proof of universality for singleton-supported quantum functionals relies on the notion of a \textit{moment polytope} from geometric invariant theory which establishes a deep correspondence between the representation theory of the group $G = \GL_{d_1} \times \cdots \times \GL_{d_k}$ and singular values of singleton flattenings of tensors~\cite{klyachkoCoherentStatesEntanglement2002,klyachkoQuantumMarginalProblem2004,christandlNonzeroKroneckerCoefficients2007,entanglementPolytope2013,sawickiConvexityMomentumMap2014}. 
Very recently, it was shown by Sakabe, Doğan and Walter~\cite{sakabe2026strassen}, that the singleton-supported lower quantum functionals coincide with Strassen's upper support functional $F_{\theta} = \zeta^{\theta}$. This yields an independent proof that $F_\theta = \zeta^{\theta}$ is a universal spectral point. Their proof relies only on convex analysis on Hadamard manifolds, in particular recent work of Hirai~\cite{hiraiGeneralizedGradientFlows2025}, bypassing the invariant-theoretic upper quantum functional~$F^\theta$.

By contrast, relatively little is known about the upper and lower quantum functionals for non-singleton-supported distributions~$\theta$.
This is the focus of the present paper.
To this end, we let $\dists^k$ denote the set of all probability distributions over bipartitions of $[k]$, while $\distssing^k \subseteq \dists^k$ denotes the subset of singleton-supported distributions.
In their original paper~\cite{cvz23universal}, Christandl, Vrana and Zuiddam managed to additionally show all the properties of the upper and lower quantum functionals, except for their agreement, for a larger subset of distributions -- those whose support satisfies a condition they called \textit{non-crossing}, and we rebrand as \textit{laminar}. The set of distributions with laminar support is denoted by $\distsnc^{k} \subseteq \dists^{k}$.
A set of bipartitions is laminar if for every pair of bipartitions, $S|\overline{S}$ and $T|\overline{T}$, either $S \subseteq T$ or $S \subseteq \overline{T}$ or~$T \subseteq S$ or~$\overline{T} \subseteq S$, i.e., $S$ cannot cross the boundary of the bipartition $T|\overline{T}$ (see \cref{fig:non-crossing-illustration}).\footnote{Note that for the special case of $3$-tensors, where $k=3$, the distinction between singleton-supported, laminar and arbitrary distributions vanishes, i.e., $\distssing^3=\distsnc^3=\dists^3$. For higher order tensors however, where $k\geq4$, these sets of distributions are distinct, i.e., $\distssing^k\subsetneq\distsnc^k\subsetneq\dists^k$.}

\begin{figure}[h]
    \centering
    \tikzset{every picture/.style={line width=0.75pt}} \resizebox{0.9\textwidth}{!}{
\begin{tikzpicture}[x=0.75pt,y=0.75pt,yscale=-1,xscale=1]

\begin{scope}[color=ForestGreen]
\draw  [line width=3] (136,53)-- (136,26) -- (245.5,26) -- (245.5,53) ;

\draw  [line width=3]  (144,53)--(144,34.76) -- (204.5,34.76) -- (204.5,53) ;

\draw  [line width=3]   (213,53) -- (213,34.76) -- (238.5,34.76) -- (238.5,53) ;

\draw [line width=3] (285,53)-- (285,34.76) -- (310.5,34.76) -- (310.5,53) ;

\draw [line width=3]  (253,53)--(253,26) -- (317.5,26) -- (317.5,53) ;

\end{scope}
\begin{scope}[color=BrickRed]
\draw [line width=3] (370,52)-- (370,33.76) -- (429.5,33.76) -- (429.5,52) ;

\draw [line width=3] (393,53) -- (393,26) -- (502.5,26) -- (502.5,53) ;
\end{scope}

\draw (151,47) node [anchor=north west][inner sep=0.75pt] [align=left] {$A$};
\draw (185,47) node [anchor=north west][inner sep=0.75pt] [align=left] {$B$};
\draw (220,47) node [anchor=north west][inner sep=0.75pt]  [align=left] {$C$};
\draw (259,47) node [anchor=north west][inner sep=0.75pt] [align=left] {$D$};
\draw (292,47) node [anchor=north west][inner sep=0.75pt]  [align=left] {$E$};
\draw (374,47) node [anchor=north west][inner sep=0.75pt]  [align=left] {$A$};
\draw (408,47) node [anchor=north west][inner sep=0.75pt]  [align=left] {$B$};
\draw (443,47) node [anchor=north west][inner sep=0.75pt]  [align=left] {$C$};
\draw (478,47) node [anchor=north west][inner sep=0.75pt]  [align=left] {$D$};
\draw (511,47) node [anchor=north west][inner sep=0.75pt]  [align=left] {$E$};

\end{tikzpicture}
}
\caption{The left figure illustrates the \textcolor{ForestGreen}{laminar} family of bipartitions $\textcolor{ForestGreen}{\{AB|CDE,C|ABDE,ABC|DE,E|ABCD,DE|ABC\}}$, while the right figure illustrates a family~$\textcolor{BrickRed}{\{AB|CDE,BCD|AE\}}$ that is \textcolor{BrickRed}{not laminar}.}
    \label{fig:non-crossing-illustration}
\end{figure}
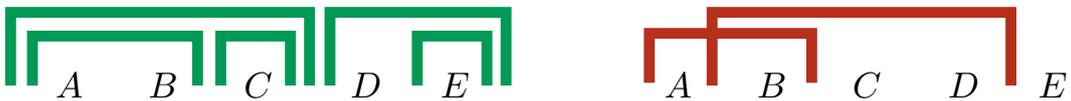

The laminar quantum functionals can provide more information than the singleton-supported quantum functionals. For instance, it is known that the minimum over the singleton-supported quantum functionals is precisely the asymptotic slice rank, whereas the laminar upper quantum functionals yield upper bounds on asymptotic partition rank~\cite[Thm. 5.3, Cor. 5.7]{cvz23universal}. This is connected to the ``one-versus-the-rest'' requirement in slice rank decompositions, whereas partition rank measures rank using rank-$1$ terms across any bipartition.
A concrete example of the separation can be obtained by considering the tensor product of a rank-$2$ unit tensor shared between~$A$ and $B$, and one shared between~$C$ and~$D$.
This has partition rank~$1$ (as it has rank~$1$ across the~$AB|CD$ bipartition), which is detected by the upper quantum functional whose weight is solely on the bipartition~$AB|CD$, but its (asymptotic) slice rank is~$2$, as every singleton quantum functional evaluates to~$2$ on this tensor (it has uniform singleton marginals).
Although this example merely witnesses that the flattening rank across~$AB|CD$ is~$1$, it was recently shown by Lampert and Moshkovitz~\cite{lampert2026sliceRankITCS} that the partition rank of the determinant of~$4 \times 4$ matrices is~$3$, yet its (asymptotic) slice rank is easily seen to be at least~$4$ (uniform singleton marginals), and its flattening rank across any non-singleton bipartition is~$6$.
The laminar upper quantum functionals do not witness this (as the lower quantum functionals achieve their maximum value, hence the same is true for the upper quantum functionals), but it inspires belief that the quantum functionals could separate asymptotic partition rank from asymptotic slice rank; we leave this to future investigation.

Much remains unknown about the laminar quantum functionals. 
In particular, the question of whether any non-singleton-supported (upper or lower) quantum functional that is genuinely $k$-partite\footnote{A universal spectral point for $\ell$-tensors $F\in\asympspec(\T^\ell)$ induces universal spectral points for $k$-tensors, for every $k>\ell$, by grouping local spaces: for every fixed partition $S_1\sqcup \cdots\sqcup S_\ell=[k]$, the induced universal spectral point takes $\psi\in V_1\otimes \cdots\otimes V_k$ and evaluates $F$ on it as an $\ell$-tensor by $\psi\in\bigotimes_{i=1}^\ell(\otimes_{j\in S_i}V_j)$. By a genuinely $k$-partite functional, we mean a functional of $k$-tensors which does not arise in this way.} could agree, and could hence be a universal spectral point, was left unresolved in~\cite{cvz23universal}.
This is the primary focus of the present paper.

\subsection{Results}
We can divide this paper's results into three broad strokes:
    First, we separate the upper and lower quantum functionals parametrized by laminarly-supported distributions that are not singleton-supported, i.e. $\theta \in \distsnc \setminus \distssing$, and in particular, we show that the lower quantum functionals are not universal spectral points.
    Second, we observe that the construction does yield new spectral points for a sub-semiring of tensors, denoted by $\Tnc$, we call the \emph{laminar-quantum-robust} tensors. 
    The sub-semiring $\Tnc$ includes various tensors including iterated matrix multiplication tensors and more generally any graph tensor \cite[Sec.~2.1]{Christandl2019tensorSurgery}.
    We establish that this sub-semiring also contains other tensors, such as the~$\W_4$-tensor from quantum information theory.
    Third, as spectral points for the sub-semiring $\Tnc$, laminar quantum functionals are different than the ones constructed by singleton-supported distributions $\distssing$, even after all possible groupings of local spaces. An extension result of Strassen (\cite[Prop.~3.3]{Strassen1988spectrum}, see also~\cite[Thm.~4.16]{wigdersonAsymptoticSpectraTheorya}) implies that these can be extended into universal spectral points that have yet to be identified.

\paragraph{Laminarly-supported distributions generally are not universal.} We show that any laminarly-supported distribution $\theta$ which has some positive mass $\theta(S|\overline{S})>0$ for a genuine multiparty bipartition $1<|S|,|\overline{S}|$ fails to construct a universal spectral point.
In particular, while singleton-supported quantum functionals were shown to be universal spectral points in~\cite{cvz23universal} by showing their upper and lower quantum functional agree on all tensors, this agreement no longer holds for laminar quantum functionals:
\begin{theorem*}[\cref{thm:upper lower separation general k}]
    For every $\theta\in\distsnc\setminus\distssing$ there exists a tensor $\psi$ for which the upper and lower quantum functionals of $\theta$ disagree: $F_\theta(\psi)< F^\theta(\psi)$.
\end{theorem*}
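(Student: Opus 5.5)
The plan is to reduce to a single "witness" bipartition and then exploit the difference between the two definitions. The upper functional $F^\theta$ optimizes the weighted entropy $E_\theta=\sum_{S}\theta(S|\overline S)\,H(\text{marginal on }S)$ over the whole orbit closure $\overline{G\cdot\psi}$. Equivalently, it optimizes over the corresponding moment-type polytope. The lower functional $F_\theta$ only takes the supremum over states in the orbit $G\cdot\psi$ itself, with $G=\GL_{d_1}\times\cdots\times\GL_{d_k}$ acting locally. For singleton bipartitions these coincide, because the singleton marginals are exactly the moment map data of $G$. For a non-singleton bipartition $S|\overline S$, the marginal on $S$ is \emph{not} controlled by the moment map of $G$, since only $\prod_{i\in S}\GL_{d_i}$ acts on that side rather than $\GL(\bigotimes_{i\in S}V_i)$. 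So I expect tensors whose optimal $S$-spectrum is reachable only in the limit, or only by non-local operations, to separate the two functionals.

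Concretely, fix $\theta\in\distsnc\setminus\distssing$ and a bipartition $S|\overline S$ with $\theta(S|\overline S)>0$ and $|S|,|\overline S|\ge2$. Pick $a,a'\in S$ and $b,b'\in\overline S$. I would build $\psi$ as a tensor product $\psi=\omega\otimes\tau$ of two factors:
\begin{enumerate}
\item A small "gadget" $\omega$ living on the four legs $a,a',b,b'$, with trivial one-dimensional factors on the remaining legs.
\item A tensor $\tau$ made of unit tensors $\unitk{r}{2}$ placed across pairs of legs and chosen compatibly with the laminar family $\supp\theta$. Its role is that every bipartition in $\supp\theta$ already sees a maximally mixed, hence optimal, marginal from $\tau$.
\end{enumerate}
Laminarity is what allows such a $\tau$ to exist, and to be chosen so that it does not interact with the choice of orbit element for $\omega$. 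Then $E_\theta$ splits additively, and the comparison of $F_\theta(\psi)$ with $F^\theta(\psi)$ reduces to comparing the sup over the orbit and over the orbit closure of $E_\theta$ restricted to $\omega$.

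For the gadget, the $S$-flattening of $\omega$ should have rank $2$, with the $S$-side image spanned by two vectors in $V_a\otimes V_{a'}$, for instance $e_1\otimes e_1$ and $e_1\otimes e_2+e_2\otimes e_1$. No element of $\GL_{d_a}\times\GL_{d_{a'}}$ (together with the $\overline S$ side) maps these to an orthogonal pair of equal norm, yet such a configuration can be approached in the closure by degenerating. This should make the $S$-marginal entropy strictly less than $1$ on the orbit but arbitrarily close to $1$ near the boundary. Since the $S$-marginal entropy of any state in the orbit closure is at most $\log\rank\psi_S=1$, the upper functional $F^\theta$ would then attain the full value $2^{\theta(S|\overline S)}$ from this factor, while $F_\theta$ would be strictly smaller.

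Alternatively, if the direct computation is awkward, I would argue via additivity. $F^\theta$ is additive, by the earlier results of Christandl, Vrana and Zuiddam for laminar $\theta$. So it suffices to exhibit $\psi_1,\psi_2$ with $F_\theta(\psi_1\oplus\psi_2)>F_\theta(\psi_1)+F_\theta(\psi_2)$, in the spirit of B\"urgisser's argument for support functionals.

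The main obstacle is the strict inequality for the lower functional, namely showing that the supremum over the orbit cannot reach the closure value. The supremum need not be attained, so I must show it stays bounded away and is not merely unattained. My first attempt would be to use the compactness of the normalized orbit of a suitably chosen $\omega$ whose orbit is closed modulo scaling. On a closed orbit the sup is a max, and I would then show algebraically that the maximizing configuration (orthonormal equal-weight $S$-Schmidt vectors) is not of the form $(g_a\otimes g_{a'})\{u,v\}$. A secondary difficulty is checking that the singleton and other laminar terms of $\theta$ cannot compensate; the factor $\tau$ is designed precisely to pin those terms at their maximum.
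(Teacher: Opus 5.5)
There is a genuine gap, and it begins with the roles you assign to the two functionals. The lower functional $E_\theta(\psi)=\sup_g H_\theta(g\psi)$ is the supremum of a function that is continuous on $\Proj(V)$, taken over an orbit with compact projective closure. So it already equals $\max_{[\varphi]\in\overline{\SL\cdot[\psi]}}H_\theta(\varphi)$, and there is no separate ``orbit versus closure'' problem. The upper functional is not an orbit-closure optimization at all. It is defined by non-vanishing of $\prod_b P^{V_b}_{\lambda^{(b)}}\psi^{\kron n}$, where for a non-singleton $b=\{S,\overline S\}$ the projector is isotypic for $\GL(V_S)$ on the grouped legs.

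Your mechanism therefore yields nothing. A gadget whose $S$-entropy is $<1$ on the orbit but tends to $1$ at the boundary has $E_\theta$ equal to the full value. Passing to a closed orbit removes the distinction altogether and leaves you no tool to show that $F^\theta$ is large. Your remark that only $\prod_{i\in S}\GL_{d_i}$, rather than $\GL(V_S)$, acts on the $S$ side is the right intuition, but it is never turned into an argument.

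The concrete gadget fails as well. The spectrum of $\rho_S$ depends on the Gram data of both the $S$-side and the $\overline S$-side vectors. Your image $e_1\ot e_1,\ e_1\ot e_2+e_2\ot e_1$ is already orthogonal, and it becomes orthonormal under $\mathrm{diag}(1,2^{-1/2})^{\ot 2}$. In fact $\W_4$ has exactly this $AB$-image, has a maximally mixed rank-$2$ $AB$-marginal, and lies in $\Tnc^4$. More fundamentally, a maximally mixed rank-$2$ marginal on the $4$-dimensional space $V_a\ot V_{a'}$ says nothing about the singleton marginals, so there is no rigidity to exploit.

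The missing idea is Kempf--Ness rigidity combined with full flattening rank. Take, as the paper does, $\mathsf{S}_p$ with $p\in(0,1)$, $p\neq\tfrac12$. It has uniform singleton marginals and $\trank_{AB|CD}=4=\trank_A\trank_B$, but its $AB$-spectrum is $(\tfrac p2,\tfrac p2,\tfrac{1-p}2,\tfrac{1-p}2)$. Since $E_\theta\le E^\theta\le M^\theta$, the argument on the lower side runs as follows:
\begin{enumerate}
\item If $E_\theta=M^\theta$, then some $\varphi$ in the orbit closure has maximally mixed full-rank $AB$- and $CD$-marginals.
\item These force uniform singleton marginals for $\varphi$.
\item By Kempf--Ness, $\varphi$ then lies in the local-unitary orbit of $\mathsf{S}_p$, so $H_\theta(\mathsf{S}_p)=M^\theta$, a contradiction.
\end{enumerate}

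On the upper side you need an explicit non-vanishing. One has $P^{AB|CD}_{(1^4)}\prod_{X\in\{A,B,C,D\}}P^X_{(2,2)}=P^{AB|CD}_{(1^4)}$ and $\norm{P^{AB|CD}_{(1^4)}\mathsf{S}_p^{\ot 4}}^2=\abs{\det(\mathsf{S}_p)_{AB|CD}}^2\neq0$. This saturates every term of $M^\theta$ simultaneously, and it is what disposes of your worry about the other laminar terms.

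Your auxiliary factor $\tau$ is unnecessary. The paper embeds with one-dimensional extra legs and pushes $\theta$ forward to the four legs, where laminarity forces zero weight on $AC|BD$ and $AD|BC$. Your claim that the functional ``splits additively'' on $\omega\ot\tau$ is also unjustified, since $F_\theta$ is only super-multiplicative and $F^\theta$ only sub-multiplicative.

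Finally, the direct-sum route is logically sound: sub-additivity of $F^\theta$, which is all that is established, together with $F_\theta\le F^\theta$ suffices. But without an explicit pair witnessing strict super-additivity of $F_\theta$, it merely restates the problem.
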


We give both qualitative and quantitative separations.
The main tool is a classical result due to Kempf and Ness (see \cref{thm:kempf-ness-general}), which provides a rigidity result for states with maximal entropy for each marginal on the local vector spaces: these are unique, up to unitary change of bases, in their~$\SL$-orbit closures.
This allows us to establish tension between having maximal entropy on all single local spaces, and having maximal entropy across non-singleton bipartitions.

The qualitative separation (\cref{prop:psi p gives separation}) is achieved as follows. We define a single-parametric family of $4$-tensors which have both maximal entropies for the marginals on the local spaces and maximal flattening rank across the bipartition $AB|CD$, while its \emph{entropy} across this bipartition is sub-maximal. Thus lower quantum functionals with non-zero weights on $AB|CD$ do not attain the maximal value, while we show that all the corresponding upper quantum functionals do. Embedding these tensors as $k$-tensors gives separations for every $k\geq4$. 

The quantitative separation comes from a new upper bound on the lower quantum functional's evaluation on a tensor $\psi$, which involves the determinant~$\det(\psi_{AB|CD})$ of the flattening of~$\psi$ across the bipartition~$AB|CD$. This bound allows us to show that the separation occurs generically for tensors in~$(\C^d)^{\ot 4}$ (\cref{cor:generic-separation}).
We also exhibit a continuous family~$\mathsf{Q}_\gamma$ of tensors for which the upper bound is tight in~\cref{sec:werner-gamma}.

This new bound also allows us to consider the lower quantum functionals by themselves, showing they are not universal spectral points:
\begin{theorem*}[\cref{prop:general-lower-func-not-spectral}]
    For every $\theta\in\dists\setminus\distssing$, the lower quantum functional~$F_\theta$ is not a universal spectral point: $F_\theta\notin\asympspec(\T^k)$.
\end{theorem*}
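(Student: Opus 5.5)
To show that $F_\theta$ is not a universal spectral point, I will exhibit a failure of one of the defining properties. The natural candidates are additivity and multiplicativity. By \cite{cvz23universal}, $F_\theta$ is always normalized, monotone, super-additive and super-multiplicative, so it suffices to find tensors on which one of these inequalities is strict. I will aim for a failure of multiplicativity via asymptotic regularization.

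The mechanism is Strassen's duality together with the new determinant bound. Suppose for contradiction that $F_\theta\in\asympspec(\T^k)$. Then $F_\theta$ is invariant under degeneration and asymptotic equivalence. Moreover, since every universal spectral point is bounded by the asymptotic flattening ranks, $F_\theta(\psi)$ would be determined by the asymptotic class of $\psi$.

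I would pick $\theta\in\dists\setminus\distssing$ with $\theta(S|\overline S)>0$ for some $S$ with $|S|,|\overline S|\ge2$, and after grouping legs reduce to a 4-tensor situation $AB|CD$. Take a generic tensor $\psi\in(\C^d)^{\ot 4}$, embedded appropriately. Such a $\psi$ has maximal flattening rank across every bipartition and uniform singleton marginals after $\SL$-normalization (Kempf--Ness). Generically it also lies in the orbit closure of, or asymptotically restricts to and from, a tensor $\varphi$ attaining maximal value on every flattening simultaneously, such as a suitable graph or unit-type tensor with maximally mixed $AB|CD$ marginal. For instance, a generic tensor of format $d^4$ degenerates to or from structured tensors. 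The cleaner route is to use that $\psi^{\kron n}$ and a tensor $\varphi_n$ with maximal entropy across all bipartitions in $\supp\theta$ are related by restriction up to $o(n)$ factors. Then $F_\theta(\varphi)=\prod$ of maximal values, whereas the determinant bound (\cref{cor:generic-separation}) gives $F_\theta(\psi)<$ that maximal value. Multiplicativity plus monotonicity would then force equality, which is a contradiction.

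Concretely, the steps are as follows:
\begin{enumerate}
\item Reduce to a single non-singleton bipartition with positive weight, via grouping and the embedding used in \cref{thm:upper lower separation general k}.
\item Choose $\psi$ with $F_\theta(\psi)$ strictly below the upper value, using the determinant upper bound.
\item Show that $\psi$ is asymptotically equivalent to, or asymptotically restricts to, a tensor $\varphi$ with $F_\theta(\varphi)=F^\theta(\psi)$. For example, use that the upper functional is attained, $F^\theta(\psi)=\lim F_\theta(\text{degenerations of }\psi^{\kron n})^{1/n}$. This follows from the definition of $F^\theta$ as a maximum over the orbit closure of the moment polytope, combined with the fact that points in the moment polytope of $\psi^{\kron n}$ are approximated by degenerations.
\item Conclude that the regularization $\lim F_\theta(\psi^{\kron n})^{1/n}$ exceeds $F_\theta(\psi)$, contradicting multiplicativity.
\end{enumerate}

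The main obstacle is step 3: exhibiting, inside the asymptotic orbit closure of $\psi^{\kron n}$, tensors whose $AB|CD$ entropy approaches the maximum. I would first try to use the tensor power trick on the separation family $\psi_p$ from \cref{prop:psi p gives separation}. There the upper functional equals the maximal value, and I would show that $F_\theta(\psi_p^{\kron n})^{1/n}\to F^\theta(\psi_p)$ by constructing explicit degenerations of $\psi_p^{\kron n}$ onto approximately maximally entangled $AB|CD$ states, for instance by projecting onto typical type classes. This would show strict super-multiplicativity and hence $F_\theta\notin\asympspec(\T^k)$.
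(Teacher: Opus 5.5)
Your plan depends entirely on Step~3, and that step is not available. The identity $F^\theta(\psi)=\lim_n F_\theta(\psi^{\kron n})^{1/n}$ does not follow from the definitions. For a bipartition $S|\overline S$ with $\abs{S},\abs{\overline S}\ge 2$, the projector $P^{V_b}_\lambda$ is built from $\GL(V_S)$, which is not part of the local group $\GL_{d_1}\times\dotsb\times\GL_{d_k}$. So $F^\theta$ is not a maximum over a moment polytope of an orbit closure; that description holds only for $\theta\in\distssing$. As a general identity, it would make $F^\theta$ super-multiplicative and hence multiplicative, which the paper leaves open.

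You also give no argument on the specific tensor $\mathsf S_p$, and the suggested mechanism fails there. The $AB|CD$ flattening of $\mathsf S_p^{\kron n}$ is diagonal (support $\{(a,b,a,b)\}$), with squared entries $\propto p^{\,n-\abs{a\oplus b}}(1-p)^{\abs{a\oplus b}}$. Restricting $A,B$ to type classes only cuts out a product set of pairs $(a,b)$, on which these weights still vary. For the weight-$n/2$ classes, a direct count gives $AB|CD$ entropy $n\,H_{AB|CD}(\mathsf S_p)+o(n)$, so there is no per-copy gain. You therefore have no tensor on which $F_\theta$ is shown to be strictly super-multiplicative.

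Moreover, Step~1 and the use of $F^\theta(\mathsf S_p)$ require laminar $\theta$. The embedding argument uses $\theta(ACX|BDX')=\theta(ADX|BCX')=0$, and for non-laminar $\theta$ the upper functional depends on the ordering and need not be normalized (\cref{cl:crossing-not-normalised}). The statement, however, concerns all $\theta\in\dists\setminus\distssing$.

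Your second paragraph has the right intuition: a spectral point should attain its maximum on generic tensors, while the determinant bound keeps $F_\theta$ strictly below that maximum on an open set. Single-copy orbit closures cannot deliver this. The relation $\psi\in\overline{\GL\cdot\varphi}$ gives $F(\psi)\le F(\varphi)$, which is the wrong direction, and generic orbit closures are far too small to contain a fixed structured tensor. The missing ingredient, used in the paper, is the theorem of \cite{christandlAsymptoticTensorRank2025}: every universal spectral point is Zariski-lower-semicontinuous, meaning all sublevel sets $F^{-1}((-\infty,r])$ are Zariski-closed. The paper's proof then runs as follows.
\begin{enumerate}
\item Take local dimension $3$ on $A,B\in S$ and $C,D\in\overline S$, and $1$ elsewhere. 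The AME tensor $\mathsf L$ has uniform marginals across every bipartition. Hence $E_\theta(\mathsf L)=M^\theta(\mathsf L)$ is the maximum of $E_\theta$ on this format, for every $\theta$, laminar or not.
\item By \cref{cor:Edetab bound} and \cref{lem:semistable det upper bound gives opens and dense}, there is a non-empty Euclidean-open set $U$ on which $E_{AB}$ is bounded by a constant strictly below $\log(d_Ad_B)$.
\item Using $E_\theta\le\sum_{b'}\theta_{b'}E_{b'}$, it follows that $E_\theta$ is bounded strictly below $M^\theta(\mathsf L)$ on $U$.
\item A Zariski-closed set containing $U$ is the whole space. So the corresponding sublevel set would contain $\mathsf L$, a contradiction.
\end{enumerate}
No tensor-power computation is needed.
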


We also show that some distributions with non-laminar support over $4$-parties have upper quantum functionals that are not even normalized on the unit $4$-tensor $\unitk{2}{4}$ (\cref{cl:crossing-not-normalised}), so they also are not spectral points\footnote{The definition of the upper quantum functionals becomes unclear when considering distributions with non-laminar supports, because the projectors defining it do not commute. We use the implicit suggestion in \cite[Rem. 3.15]{cvz23universal} to fix an order of the bipartitions, alongside their weights (we do not change the notation and leave the ordering implicit).}.

The quantum functionals were shown to be non-trivial universal spectral points when the distributions are singleton-supported.  
All of the above suggests that the natural way to extend them by considering other distributions might not be a source for yet newer universal spectral points.

\paragraph{Laminar-quantum-robust tensors.} Despite the above non-universality result for laminar quantum functionals, one might wonder whether they remain spectral points for some strict sub-semiring of tensors. 
This would render laminar quantum functionals potentially useful for characterizing the asymptotic properties of tensors in this strict subfamily. 
If so, this would put laminar quantum functionals in a similar position to Strassen's original support functionals, which were shown to be spectral points merely for a sub-semiring including the motivating matrix multiplication tensor\footnote{As mentioned above, very recently the upper support functionals were shown to be universal for complex tensors after all~\cite{sakabe2026strassen}. This does not change the fact that they were developed as spectral points for a sub-semiring in order to investigate the matrix multiplication tensor.}. 
In this paper, we establish that this is indeed the case. 

Denote by $\Tnc^k\subset\T^k$ the set of $k$-tensors on which all the laminarly-supported distributions have agreeing upper and lower quantum functionals: $\Tnc^k = \{ \psi \in \T^k \mid \forall \theta \in \distsnc, \, F^\theta(\psi) = F_\theta(\psi) \}$ (\cref{def:Tnck}). Using the sub/super-additivity/multiplicativity properties of the quantum functionals together with the inequality relation $F_{\theta}(\psi) \leq F^{\theta}(\psi)$ between them, we show the subset $\Tnc^k$ is closed under direct sums and Kronecker product.
As $\Tnc^k$ also includes the unit tensors, $\Tnc^k$ constitutes a unital sub-semiring~\cref{prop:Tnck is a semiring}. 
Following Strassen's naming conventions~\cite{Strassen1991degeneration} we refer to the set $\Tnc^k$ as the semiring of laminar-quantum-robust tensors.

In the case of $3$-tensors the definition of $\Tnc^3$ is degenerate: $3$ parties have only singleton-supported distributions and these are universal spectral points by~\cite{cvz23universal}. Thus, the laminar-quantum-robust semiring for $3$-tensors is just all the $3$-tensors, i.e., $\Tnc^3 = \T^3$. 
This property of $3$-tensors is to be contrasted with $k$-tensors for any $k \geq 4$, where \cref{thm:upper lower separation general k} establishes $\Tnc^k \subsetneq \T^k$ is a proper subset.

This triviality for $3$-tensors can be used for higher-order tensors by considering embeddings of them higher-order tensors $\psi\mapsto\tilde\psi:=\psi\otimes e_1^{\otimes k-3}$ (\cref{def:embedded tensor}). Taking direct sums of Kronecker products of such tensors constructs a rich sub-semiring we denote by $\T_3^k\subset\Tnc^k$. More generally the embedding of any $\psi\in\Tnc^\ell$, for $\ell< k$, is in $\Tnc^k$ (\cref{lem:embedded-tensors}). $\Tnc^k$ trivially includes Absolutely Maximally Entangled (AME) tensors, which are well-studied in quantum information theory, as well as the determinant tensor~$\det_k = e_1 \wedge \dotsb \wedge e_k \in (\C^k)^{\otimes k}$  (\cref{remark:AME-states-are-in-Tnc}).
As we prove in this paper, $\Tnc^4$ also includes the tensor $\W_4=e_2 \ot e_1 \ot e_1 \ot e_1 + e_1 \ot e_2 \ot e_1 \ot e_1 + e_1 \ot e_1 \ot e_2 \ot e_1 + e_1 \ot e_1 \ot e_1 \ot e_2$ (\cref{thm:W4-non-crossing-agree}), which is not in $\T_3^4$, showing the embeddings of $3$-tensors do not generate the whole laminar-quantum-robust sub-semiring $\T_3^k \subsetneq \Tnc^k$.

We summarize our results into the following theorem, as well as  \cref{fig:venn-semirings}:

\begin{theorem*}
    For any $k\geq4$, the laminar-quantum-robust sub-semiring is strictly contained in the semiring of all $k$-tensors and strictly contains the semiring generated by embeddings of $3$-tensors: 
    \[\T_3^k\subsetneq\Tnc^k\subsetneq\T^k.\]
\end{theorem*}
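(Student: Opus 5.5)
The summary theorem is assembled from results announced in the introduction, so I will prove it by combining them; the only genuinely new content is the strictness of the two inclusions.

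\emph{The inclusions.} For $\T_3^k\subseteq\Tnc^k$, I will use \cref{prop:Tnck is a semiring}, which says $\Tnc^k$ is a unital sub-semiring. It therefore suffices to show that the generators $\tilde\psi=\psi\otimes e_1^{\otimes k-3}$ of $\T_3^k$ lie in $\Tnc^k$. Since $\Tnc^3=\T^3$, because every $\theta\in\dists^3$ is singleton-supported and hence universal by \cite{cvz23universal}, this follows from \cref{lem:embedded-tensors} with $\ell=3$. The inclusion $\Tnc^k\subseteq\T^k$ holds by definition.

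\emph{$\Tnc^k\neq\T^k$.} Take any $\theta\in\distsnc^k\setminus\distssing^k$. This set is nonempty for $k\ge4$; for instance, the point mass on the bipartition $\{1,2\}|[k]\setminus\{1,2\}$. Then \cref{thm:upper lower separation general k} gives a tensor $\psi$ with $F_\theta(\psi)<F^\theta(\psi)$, so $\psi\notin\Tnc^k$.

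\emph{$\T_3^k\neq\Tnc^k$.} This is the main obstacle. I will use $\W_4$, which lies in $\Tnc^4$ by \cref{thm:W4-non-crossing-agree}. For $k>4$ I will pass to its embedding $\widetilde{\W_4}=\W_4\otimes e_1^{\otimes k-4}$, which lies in $\Tnc^k$ by \cref{lem:embedded-tensors} with $\ell=4$.

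It then remains to show $\widetilde{\W_4}\notin\T_3^k$, using an invariant that all of $\T_3^k$ satisfies but $\widetilde{\W_4}$ violates. Every generator $\tilde\psi$ is supported on three legs: its flattening rank across any bipartition separating two of the legs $4,\dots,k$ from each other, or one of them from the rest, equals $1$. These flattening ranks are multiplicative under $\kron$ and additive under $\oplus$. However, a sum of generators may be embedded on different triples of legs, so a single such rank does not separate $\T_3^k$ from $\widetilde{\W_4}$. I therefore need a property preserved under sums and products whose triples vary.

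The invariant I plan to use is the following. Every $\varphi\in\T_3^k$ is a direct sum of Kronecker products of tensors $\tilde\psi_j$, each supported on some triple of legs. Each $\tilde\psi_j$ has flattening rank $1$ across every singleton bipartition outside its triple. I then want to show that every element of $\T_3^k$ satisfies an inequality among flattening ranks that $\W_4$ violates. For $\W_4$, every singleton flattening has rank $2$ and every $2|2$ flattening has rank $2$. A product of $3$-tensors on the triples $\{1,2,3\}$ and $\{2,3,4\}$ can reproduce such ranks, so flattening ranks alone are probably insufficient.

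The fallback is to work with quantum functionals directly. I would compare $F^\theta(\W_4)$ for $\theta$ weighted on $12|34$ together with the singletons against the corresponding values on products of embedded $3$-tensors. These are additive and multiplicative on $\T_3^k$, so the spectrum $\asympspec$ restricted to $\T_3^k$ factors through the spectra of $3$-tensors. A contradiction would follow if $F^\theta(\W_4)$ is incompatible with every such factorization, for instance because its value is not the entropy-maximising combination realisable by legs grouped three at a time.

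More concretely, I would try the simplest route first. Since $\W_4$ is a single tensor with rank-$2$ flattenings, any representation of $\W_4$ as a direct sum of products of embedded tensors is nearly forced: a direct sum with two or more nonzero summands has singleton flattening rank at least $2$ on every leg where each summand is nonzero. I would then do a case analysis of small decompositions of a tensor with singleton flattening ranks $(2,2,2,2)$, showing that $\W_4$ is neither a direct sum nor a nontrivial Kronecker product (it is irreducible: $\W_4$ is not in the $\GL$-orbit of a product, since its $2|2$ flattening ranks are $2$ rather than a product $1\cdot 2$ pattern over the triple). It would then have to be a single embedded $3$-tensor, which is impossible because all four of its singleton flattening ranks equal $2$, whereas an embedded $3$-tensor has rank $1$ on some leg.

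This case analysis, which concerns membership of a tensor in the semiring up to equality rather than up to restriction, is the step I expect to need the most care. In particular, I must interpret $\T_3^k$ up to local isomorphism.
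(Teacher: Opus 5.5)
Your overall structure is the paper's. The inclusions come from \cref{prop:Tnck is a semiring}, $\Tnc^3=\T^3$ and \cref{lem:embedded-tensors}. The strictness $\Tnc^k\neq\T^k$ comes from \cref{thm:upper lower separation general k}. The strictness $\T_3^k\neq\Tnc^k$ comes from $\W_4\in\Tnc^4$ (embedded for $k>4$) plus a case analysis showing that $\W_4$ is not a nontrivial direct sum, not a nontrivial Kronecker product, and not a single embedded $3$-tensor.

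The gap is the direct-sum case for $k=4$. Your only tool there is the flattening-rank count. It merely reduces $\W_4\cong\psi\oplus\varphi$ (both summands nonzero) to the case where both summands are rank-one $1\times1\times1\times1$ tensors, i.e.\ $\W_4\cong\unitk{2}{4}$. No flattening-rank argument can exclude this case, because $\unitk{2}{4}\in\T_3^4$ has every flattening rank equal to $2$, exactly like $\W_4$. This, and not a product over the triples $\{1,2,3\},\{2,3,4\}$, is why flattening ranks alone cannot separate $\W_4$ from $\T_3^4$.

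You need an invariant that distinguishes $\W_4$ from $\unitk{2}{4}$ up to local isomorphism. The paper uses stability. $\unitk{2}{4}$ has uniform one-party marginals, so it is semistable by \cref{cor:kempf-ness-tensor}. $\W_4$ is unstable: $g=\mathrm{diag}(t^{-1},t)^{\ot 4}\in\SL_2^{\times 4}$ gives $g\W_4=t^{-2}\W_4\to 0$ as $t\to\infty$, and (semi)stability is invariant under local $\GL$. Alternatively, contracting the last leg of $\W_4$ with $e_1^*$ gives $\W_3$, which has tensor rank $3$, so $\W_4$ cannot have tensor rank $2$ like $\unitk{2}{4}$.

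Two smaller points. First, your assertion that a Kronecker product of $3$-tensors on $\{1,2,3\}$ and $\{2,3,4\}$ can realise all flattening ranks $2$ is false, and it would undercut your own irreducibility step. Suppose all singleton ranks of $\psi\kron\varphi$ equal $2$ and neither factor is rank-one. Then each leg gets its factor $2$ from exactly one of $\psi,\varphi$. So the factors live on complementary nonempty sets $T,\overline{T}$, and $\trank_T(\psi\kron\varphi)=\trank_T(\psi)\trank_T(\varphi)=1$, while every flattening rank of $\W_4$ is $2$. This is exactly the paper's argument.

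Second, for $k>4$ the direct-sum case is automatic. Legs $5,\dots,k$ of $\W_4\ot e_1^{\ot k-4}$ have flattening rank $1$, so only one summand is possible, and flattening ranks suffice there. The spectral-point fallback is unnecessary.
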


\begin{figure}[h]
    \centering
    \tikzset{every picture/.style={line width=0.75pt}} \resizebox{0.9\textwidth}{!}{
\begin{tikzpicture}[x=0.75pt,y=0.75pt,yscale=-1,xscale=1,line join=round]

\draw  [draw opacity=1, thin][fill={rgb, 255:red, 65; green, 117; blue, 5 }  ,fill opacity=1.0 ] (134,37.6) .. controls (134,27.88) and (141.88,20) .. (151.6,20) -- (447.9,20) .. controls (457.62,20) and (465.5,27.88) .. (465.5,37.6) -- (465.5,90.4) .. controls (465.5,100.12) and (457.62,108) .. (447.9,108) -- (151.6,108) .. controls (141.88,108) and (134,100.12) .. (134,90.4) -- cycle ;
\draw  [draw opacity=1, thin][fill={rgb, 255:red, 126; green, 211; blue, 33 }  ,fill opacity=1 ] (144,66.5) .. controls (144,46.89) and (205.45,31) .. (281.25,31) .. controls (357.05,31) and (418.5,46.89) .. (418.5,66.5) .. controls (418.5,86.11) and (357.05,102) .. (281.25,102) .. controls (205.45,102) and (144,86.11) .. (144,66.5) -- cycle ;
\draw  [draw opacity=1, thin][fill={rgb, 255:red, 200; green, 233; blue, 170 }  ,fill opacity=1 ] (203.5,50.4) .. controls (203.5,44.66) and (208.16,40) .. (213.9,40) -- (313.6,40) .. controls (319.34,40) and (324,44.66) .. (324,50.4) -- (324,81.6) .. controls (324,87.34) and (319.34,92) .. (313.6,92) -- (213.9,92) .. controls (208.16,92) and (203.5,87.34) .. (203.5,81.6) -- cycle ;
\draw   (219.75,47) -- (228.5,60) -- (211,60)--cycle;
\draw  [fill=gray, fill opacity=0.5] (272.75,70) -- (281.5,83) -- (264,83)--cycle;
\draw    (272.75,70) -- (289.5,74) -- (264,83)--cycle;
\draw    (272.75,70) -- (289.5,74) -- (281.5,83)--cycle;
\draw  [draw opacity=1, thin][fill={rgb, 255:red, 219; green, 248; blue, 28 }  ,fill opacity=0.59 ] (301.96,88.62) .. controls (299.41,80.61) and (307.21,70.97) .. (319.38,67.09) .. controls (331.55,63.22) and (343.49,66.57) .. (346.04,74.58) .. controls (348.59,82.59) and (340.79,92.23) .. (328.62,96.11) .. controls (316.45,99.98) and (304.51,96.63) .. (301.96,88.62) -- cycle ;

\draw[white] (442,27) node [anchor=north west][inner sep=0.75pt]  []  {$\mathcal{T}$};
\draw (360,42) node [anchor=north west][inner sep=0.75pt]  []  {$\mathcal{T}_{\mathrm{lq}}$};
\draw[white] (428,74.4) node [anchor=north west][inner sep=0.75pt]  []  {$\mathsf{S}_{p}$};
\draw (298,44.4) node [anchor=north west][inner sep=0.75pt]  []  {$\mathcal{T}_{3}$};
\draw (231,47.4) node [anchor=north west][inner sep=0.75pt]  [font=\scriptsize]  {$=\langle 2,2,2\rangle $};
\draw (218,72.4) node [anchor=north west][inner sep=0.75pt]  [font=\scriptsize]  {$\mathrm{Dome}=$};
\draw (313,68.4) node [anchor=north west][inner sep=0.75pt]  [font=\scriptsize]  {$\mathsf{AME}$};
\draw (327,80) node [anchor=north west][inner sep=0.75pt]  [font=\small]  {$\mathsf{L}$};
\draw (355,65) node [anchor=north west][inner sep=0.75pt] [font=\small] {$\det_k$};
\draw (385,65) node [anchor=north west][inner sep=0.75pt]  [font=\small]  {$\mathsf{W}_{4}$};

\end{tikzpicture}
}
\caption{Venn diagram for the semirings $\T_3 \subsetneq \Tnc \subsetneq \T$ of $k$-tensors (when $k \geq 4$) and the set of Absolutely Maximally Entangled (AME) tensors. 
    That $\Tnc \subsetneq \T$ is witnessed by an explicit tensor $\mathsf{S}_p$ (\cref{prop:psi p gives separation}). 
    Similarly, $\W_4 \in \Tnc^4 \setminus \T_3^4$ by \cref{thm:W4-non-crossing-agree}. 
    For the definition of the tensor $\mathsf{L}$, see~\cref{eq:AME-state}. For the inclusion of~$\det_k = e_1 \wedge \dotsb \wedge e_k \in (\C^k)^{\otimes k}$ and $\mathsf{AME}$ in $\Tnc$, see \cref{eq:AME-state} and \cref{remark:AME-states-are-in-Tnc}. 
    For the inclusion of the semiring generated by embeddings of $3$-tensors, $\T_3 \subset \Tnc$, see \cref{lem:embedded-tensors}.}
    \label{fig:venn-semirings}
\end{figure}

\paragraph{Existence of unknown universal spectral points.}
One of the fundamental mechanics of Strassen's asymptotic spectrum theory is the existence of a universal spectral point extending any non-universal spectral point for any sub-semiring of the tensors.

We show that any laminarly-supported distribution $\theta\in\distsnc$ can be identified by evaluating its quantum functionals on tensors in the laminar-quantum-robust sub-semiring $\Tnc^k$: the map $\theta \mapsto (F^\theta\big( \unitk{2}{S} \big))_{\emptyset \neq S \subsetneq [k]}$ is injective (\cref{lem:recognition}), where~$\unitk{2}{S}$ is the rank-$2$ unit tensor on~$S \subseteq [k]$. This means every laminarly-supported distribution that is not singleton-supported has a unique quantum functional that differs from the rest on the laminar-quantum-robust sub-semiring. We know that the lower quantum functionals associated with these distributions are not universal spectral points (by \cref{prop:general-lower-func-not-spectral}). But there is \emph{some} extension of them to a universal spectral point, due to a general extension result of Strassen~\cite{Strassen1988spectrum,wigdersonAsymptoticSpectraTheorya} (see \cref{prop:Ftheta admits extension,lem:recognition}):
\begin{theorem*}
    For every~$\theta \in \distsnc$ there exists~$F \in \asympspec(\T^k)$, such that~$F(\psi) = F^\theta(\psi) = F_\theta(\psi)$ for all~$\psi \in \Tnc^k$.
    Moreover, if~$\theta \in \distsnc \setminus \distssing$ then $ F \neq F_\theta$.
\end{theorem*}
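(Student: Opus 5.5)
The plan is to assemble the statement from three ingredients: the semiring structure of $\Tnc^k$, Strassen's extension theorem, and the non-universality of $F_\theta$. I will use the properties of the quantum functionals from \cite{cvz23universal} for laminar $\theta$: $F^\theta$ and $F_\theta$ are normalized, monotone under restriction, $F_\theta \le F^\theta$, $F^\theta$ is sub-additive and sub-multiplicative, and $F_\theta$ is super-additive and super-multiplicative.

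First, by \cref{prop:Tnck is a semiring}, $\Tnc^k$ is a unital sub-semiring of $\T^k$. On it define $F' := F^\theta|_{\Tnc^k} = F_\theta|_{\Tnc^k}$. For $\psi,\varphi\in\Tnc^k$ the sandwich
\[
F_\theta(\psi)F_\theta(\varphi)\le F_\theta(\psi\kron\varphi)\le F^\theta(\psi\kron\varphi)\le F^\theta(\psi)F^\theta(\varphi)
\]
collapses to equalities, because the outer terms coincide. Hence $F'$ is multiplicative, and the same argument with direct sums shows it is additive. It is also normalized on unit tensors and monotone under restriction, since both functionals are. So $F'\in\asympspec(\Tnc^k)$.

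Second, I apply Strassen's extension result (\cite[Prop.~3.3]{Strassen1988spectrum}; \cref{prop:Ftheta admits extension}). It gives $F\in\asympspec(\T^k)$ with $F|_{\Tnc^k}=F'$, which is the first claim.

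For the "moreover" part, suppose $\theta\in\distsnc\setminus\distssing$. By \cref{prop:general-lower-func-not-spectral}, $F_\theta\notin\asympspec(\T^k)$. Since $F$ lies in $\asympspec(\T^k)$, we get $F\neq F_\theta$ as functions on $\T^k$. The proof needs nothing beyond this; in particular, \cref{lem:recognition} is not required for this statement, though it shows that distinct $\theta$ give distinct extensions.

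The only genuinely nontrivial inputs are the extension theorem and the non-universality of $F_\theta$, both of which are cited. The step most in need of care is checking that every monotonicity and sub/super property from \cite{cvz23universal} holds for all laminar $\theta$, not only singleton-supported ones. The paper states that \cite{cvz23universal} established all properties except agreement for laminar supports, so this check reduces to quoting that result.
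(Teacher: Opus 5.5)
Your proposal is correct and follows the paper's argument. \cref{prop:Tnck is a semiring} makes $F^\theta|_{\Tnc^k}=F_\theta|_{\Tnc^k}$ a spectral point of the sub-semiring $\Tnc^k$; Strassen's extension theorem (\cref{thm:spectral point extension}) lifts it to some $F\in\asympspec(\T^k)$; and $F\neq F_\theta$ follows from \cref{prop:general-lower-func-not-spectral}. You are also right that \cref{lem:recognition} only serves to distinguish the extensions for different $\theta$ and is not needed here; and for $k\le 3$ the ``moreover'' clause is vacuous, since $\distsnc=\distssing$.
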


We further note that if~$\theta\in\distsnc$ is not singleton-supported, i.e. $\theta\notin\distssing$, and not singleton-supported-after-grouping, then the extension $F$ is a universal spectral point that is not currently known.

Omitting the cases of singleton-supported-after-grouping relies on the observation that for any $\ell<k$, treating a $k$-tensor as an $\ell$-tensor by grouping of local vector spaces together (i.e. by a partition of $[k]$ into $\ell$ sets) induces universal spectral points in $\asympspec(\T^k)$ from the universal spectral point $\asympspec(\T^\ell)$. 
The flattening rank, for example, is a singleton-after-grouping quantum functional $\trank_S=F^{\theta}$ for $\theta(S)=1$. 
    
There is no known way to interpolate between general spectral points (\cite[\S 5]{maxim2023masters} is a discussion including candidates), and so the existence of such an extension~$F$ which behaves like~$F^\theta = F_\theta$ on~$\Tnc$ is new.
An open question is whether one can establish the existence of an extension~$F$ such that~$F_\theta \leq F \leq F^\theta$ on all tensors, for instance using the techniques from~\cite{bugarInterpolatingRenyiEntanglement2025}.

\subsection{Organization}
The paper is organized as follows.
In \cref{sec:definitions} we collect some preliminaries on asymptotic spectra, tensor embeddings, and the upper- and lower quantum functionals.
In \cref{sec:separation} we provide a separation between the upper- and lower quantum functionals for any laminarly-supported distribution that is not singleton-supported, and show the corresponding lower quantum functionals are not universal spectral points.
We develop the new determinant-based bound here, with some details deferred to \cref{sec:max_ent_with_det,sec:werner-gamma}.
In \cref{sec:new-functionals} we define and investigate the laminar-quantum-robust sub-semiring $\Tnc^k$, showing that it contains both embedded  lower-order laminar-quantum-robust tensors, as well as the tensor~$\W_4$.
In \cref{sec:char_forms} we recall the character formula for isotypic projectors, which we use in \cref{sec:crossing} to show that upper quantum functionals can be non-normalized for distributions with non-laminar supports.

\section{Preliminaries}\label{sec:definitions}
\subsection{Tensors, restriction and the asymptotic spectrum}
We first recall the key notions of Strassen's theory, based on the exposition by Wigderson and Zuiddam in~\cite{wigdersonAsymptoticSpectraTheorya}.
We specialize our discussion to tensors, and in particular to the following object:
\begin{definition}
    Let~$\T^k$ be the set of all non-zero complex $k$-tensors, 
    \begin{equation}
        \T^k \coloneq \bigcup_{d_1,\dots,d_k\in\N}(\C^{d_1} \otimes \dotsb \otimes \C^{d_k} )\setminus \{0\}.
    \end{equation}
\end{definition}

When $\T^{k}$ is equipped with the direct sum~$\oplus$ as addition, and the Kronecker product~$\kron$ as multiplication, the set~$\T^k$ forms a commutative semiring: both operations are associative and commutative, multiplication distributes over addition, and there is a multiplicative unit $k$-tensor~$\unitk{1}{k} \coloneqq e_1 \ot \dotsb \ot e_1 \in \C^1 \ot \dotsb \ot \C^1$.
The rank-$n$ unit $k$-tensor is then
\begin{equation}
    \label{eq:rank-n-unit-k-tensor}
    \unitk{n}{k} = \sum_{i=1}^n e_i \ot \dotsb \ot e_i.
\end{equation}
Note the rank $n$ unit tensor, $\unitk{n}{k}$, can also be viewed as the direct sum~$\unitk{n}{k} = \underbrace{\unitk{1}{k} \oplus \cdots \oplus\unitk{1}{k}}_{n}$.

In addition, the set $\T^{k}$ can also be equipped with various preorder relations.
\begin{definition}
    For two tensors~$\psi, \varphi \in \T^k$, say in~$\C^{d_1} \ot \dotsb \ot \C^{d_k}$ and~$\C^{d_1'} \ot \dotsb \ot \C^{d_k'}$ respectively, we say that~$\psi$ \emph{restricts to} $\varphi$, denoted~$\varphi \leq \psi$, if and only if there exist matrices~$M_j \in \C^{d_j' \times d_j}$, $j \in [k]$, such that $\varphi = (M_1 \ot \dotsb \ot M_k) \psi$.
    We say that~$\psi$ \emph{degenerates to} $\varphi$, denoted~$\varphi \degenleq \psi$, if and only if there exist sequences of matrices~$M_j(m) \in \C^{d_j' \times d_j}$, $j \in [k]$, such that $\varphi = \lim_{m \to \infty} (M_1(m) \ot \dotsb \ot M_k(m)) \psi$.
\end{definition}
The restriction relation, $\leq$, and degeneration relation, $\degenleq$, are both reflexive and transitive. 
They are also both compatible with the direct sum $\oplus$ and Kronecker product $\kron$,
meaning the set of $k$-tensors, $\T^k$, forms a preordered semiring.

Note the degeneration preorder may also be formulated more geometrically (through orbit closures) or algebraically (where the~$M_j$ are matrices with entries rational functions in a parameter~$\eps$), see e.g.~\cite{Strassen1987relative} or~\cite[Thm.~20.24]{burgisserAlgebraicComplexityTheory1997}.

The restriction preorder is additionally a \emph{Strassen preorder} as defined in~\cite{wigdersonAsymptoticSpectraTheorya}. 
Although the precise definition of a Strassen preorder is not relevant for this paper, an important property is that it is inherited by sub-semirings with the restricted order.
\begin{definition}
    Let~$\semiring$ be a semiring with a Strassen preorder~$\leq$.
    The \emph{closure}~$\asympleq$ of~$\leq$ is defined as follows: for~$a, b \in \semiring$, we say~$a \asympleq b$ if there exists~$f\colon \N \to \N$ such that~$f(n) = o(n)$ and~$a^n \leq b^n 2^{f(n)}$.
\end{definition}
\begin{definition}
    Let~$\semiring$ be a semiring with a Strassen preorder~$\leq$.
    The \emph{asymptotic spectrum}~$\asympspec(\semiring, \leq)$ is the set of all~$\leq$-monotone homomorphisms~$F\colon \semiring \to \R_{\geq 1}$.
    Its elements are called \emph{spectral points}.
    We write~$\asympspec(\semiring)$ if~$\leq$ is clear from context.
\end{definition}
One of the essential features of Strassen's theory is the following duality theorem:
\begin{theorem}[Duality, {\cite[Thm.~3.22]{wigdersonAsymptoticSpectraTheorya}}]
    Let~$\leq$ be a Strassen preorder on~$\semiring$.
    Let~$a,b \in \semiring$.
    We have~$a \asympleq b$ if and only if for every~$F \in \asympspec(\semiring, \leq)$ it holds that~$F(a) \leq F(b)$.
\end{theorem}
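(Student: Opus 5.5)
The plan is to prove the two implications of the duality theorem separately. The forward direction is elementary. For the converse I would use the Kadison--Dubois/Positivstellensatz-type representation theorem that underlies Strassen's theory.

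\emph{Forward direction.} Suppose $a \asympleq b$, so $a^n \leq b^n 2^{f(n)}$ with $f(n)=o(n)$, and let $F \in \asympspec(\semiring,\leq)$. Then $F$ is a monotone semiring homomorphism, and $F(2)=2$ because $F(1)=1$ and $F$ is additive. Monotonicity and multiplicativity give
\[ F(a)^n \leq F(b)^n 2^{f(n)}. \]
Taking $n$-th roots and letting $n\to\infty$ yields $F(a)\leq F(b)$, since $2^{f(n)/n}\to 1$.

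\emph{Reverse direction.} I would argue by contraposition: assume $a \not\asympleq b$ and construct a spectral point $F$ with $F(a)>F(b)$. I would invoke the abstract Strassen Positivstellensatz, which is exactly the content packaged into the definition of a Strassen preorder, as recorded in \cite{wigdersonAsymptoticSpectraTheorya}. The steps are as follows.

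First, use the Strassen-preorder axioms: $\leq$ is a semiring preorder, $n\leq m$ in $\N$ iff $n\leq m$ in $\semiring$, and every element is bounded by a natural number, with $b\neq 0$ dominating $1$ up to a power. These let me pass to the Grothendieck ring $\semiring^{\mathrm{gr}}$ with the cone $P$ generated by differences $y-x$ for $x\leq y$. That cone is Archimedean, i.e. $\N$ is cofinal.

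Second, apply Zorn's lemma to extend $P$ to a maximal preorder compatible with the semiring structure that still does not contain the relation $a\leq b$ asymptotically. Concretely, I would enlarge the preorder by adjoining $b+\text{(small rational slack)}\leq a$. The key claim is that $a\not\asympleq b$ means $a \leq b$ fails even after adding $2^{o(n)}$ slack to $n$-th powers, and this is what makes the extension consistent.

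Third, show that a maximal Archimedean preorder is total. Then define $F(x)=\inf\{m/n : n x \leq m\}$. Totality and Archimedean boundedness make $F$ well defined, additive, multiplicative, monotone, normalized on $\N$, and such that $F(a)>F(b)$.

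The main obstacle is the second step: converting the asymptotic failure, which involves $n$-th powers with subexponential error, into an additive/cone-level statement that survives Zorn's lemma. I would handle it by proving that $a\asympleq b$ is equivalent to the statement that for every $\epsilon>0$ there exist $n,k$ with $k\,a^n \leq k\,b^n\cdot 2^{\epsilon n}$. This is a standard ``rational slack'' reformulation, using the boundedness axiom together with repeated squaring. It lets the Kadison--Dubois argument run with rational scalars, and it is where the hypothesis $f(n)=o(n)$, rather than a fixed constant, is genuinely used.
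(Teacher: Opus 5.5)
Your forward direction is correct. The architecture of your converse is Strassen's argument, which is what stands behind the cited result; the paper itself gives no proof. That architecture is: adjoin a slack relation $kb+1\le ka$, extend by Zorn to a maximal Strassen preorder, show maximal preorders are total, and read off $F(x)=\inf\{m/n : nx\le m\}$.

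The trouble is that the step you single out as the main obstacle is not closed by the lemma you offer. Adjoining $kb+1\le ka$ produces the semiring preorder $x\preceq y \iff \exists s\in\semiring:\ x+s\,ka\le y+s\,(kb+1)$. This fails to be a Strassen preorder exactly when $m+1+s\,ka\le m+s\,(kb+1)$ for some $m\in\N$ and $s\in\semiring$. Such a witness involves an \emph{arbitrary} multiplier $s\in\semiring$ and an additive constant $m$. Your criterion ``for every $\epsilon>0$ there are $n,k$ with $k\,a^n\le k\,b^n2^{\epsilon n}$'' only handles natural-number multiplicative catalysts, and nothing in the proposal converts the witness into that form. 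The same unaddressed issue sits inside your third step (maximal $\Rightarrow$ total).

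The missing ingredient is an additive cancellation lemma: if $m+s\,x\le m+s\,y$ with $m\in\N$ and $s,y\neq 0$, then $x\asympleq y$. A proof runs as follows.
\begin{itemize}
\item Adding the relation to itself gives $m+js\,x\le m+js\,y$ for all $j$.
\item Hence $s\,(jx)\le m+js\,y\le s\,(mt+jy)$, using $1\le ts$.
\item Remove the multiplier $s$ by catalyst reuse ($sX\le sY\Rightarrow sX^p\le sY^p$) together with $1\le ts$ and $s\le r$.
\item This gives $jx\asympleq jy+mt\le (j+c)\,y$, with $c$ independent of $j$ (using $1\le t'y$).
\item Only now does your criterion, with catalyst $j^n$ and $j\to\infty$, yield $x\asympleq y$.
\end{itemize}
With this lemma both missing steps follow.
\begin{itemize}
\item A maximal Strassen preorder coincides with its own asymptotic closure. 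So adjoining $y\le x$ for an incomparable pair cannot collapse the order on $\N$, which proves totality.
\item If the extension by $kb+1\le ka$ were inconsistent for every $k$, you would get $ka\asympleq kb+1\le (k+t')\,b$ for all $k$, hence $a\asympleq b$. So some $k$ works, and it gives $F(a)\ge F(b)+1/k$.
\end{itemize}

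Three smaller remarks:
\begin{itemize}
\item The Archimedean axiom gives $1\le r\,b$ with $r$ a natural multiple, not a power.
\item If your cone in the Grothendieck ring is meant to be multiplicatively closed, note that $x\le y$ and $x'\le y'$ do not imply $xy'+x'y\le xx'+yy'$, even up to an additive catalyst. So an inconsistency found in the ring need not pull back to $\le$. Working with $\preceq$ directly in $\semiring$, as Strassen does, avoids this.
\item The case $b=0$ has to be handled separately.
\end{itemize}
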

The asymptotic spectrum also possesses the property that, for inclusions~$\semiring \subseteq \semiring'$ with the order given by the restriction of the order in~$\semiring'$, the spectral points on~$\semiring$ admit extensions to~$\semiring'$:
\begin{theorem}[{\cite[Prop.~3.3]{Strassen1988spectrum}, see also~\cite[Thm.~4.16]{wigdersonAsymptoticSpectraTheorya}}]
  \label{thm:spectral point extension}
    Let~$\semiring, \semiring'$ be semirings with Strassen preorders~$\leq_\semiring, \leq_{\semiring'}$ respectively.
    Suppose~$f\colon \semiring \to \semiring'$ is a semiring homomorphism that is also an order embedding, i.e., $a \leq_\semiring b$ if and only if~$f(a) \leq_{\semiring'} f(b)$.
    Then the map~$f^* \colon \asympspec(\semiring', \leq_{\semiring'}) \to \asympspec(\semiring, \leq_\semiring)$, given by~$F \mapsto F \circ f$, is surjective.
\end{theorem}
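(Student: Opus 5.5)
This is Strassen's extension theorem, which the paper cites rather than proves. The plan is the standard argument: reduce surjectivity to extending spectral points from a sub-semiring, then obtain an extension by a Zorn's lemma maximality argument combined with a one-step extension lemma.

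\textbf{Step 1: reduce to a sub-semiring.} Since $f$ is an order embedding, it is injective. Indeed, $f(a)=f(b)$ gives both $a\leq b$ and $b\leq a$; strictly this yields equivalence rather than equality, so we pass to the quotient by equivalence. Hence $\semiring$ may be identified with the sub-semiring $f(\semiring)\subseteq\semiring'$ equipped with the induced preorder, and this preorder is again Strassen. It then suffices to show the following: every monotone homomorphism $F_0\colon \semiring\to\R_{\geq 0}$ extends to a monotone homomorphism on $\semiring'$.

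\textbf{Step 2: Zorn's lemma.} Consider pairs $(\mathcal S,F)$ where $\semiring\subseteq\mathcal S\subseteq\semiring'$ is a sub-semiring and $F\in\asympspec(\mathcal S)$ extends $F_0$. Order these pairs by extension. Any chain has an upper bound, namely the union of the sub-semirings with the glued functional. By Zorn's lemma there is a maximal pair $(\mathcal S,F)$. It remains to show $\mathcal S=\semiring'$.

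\textbf{Step 3: one-step extension.} Given $x\in\semiring'\setminus\mathcal S$, we want a value $t\geq 0$ such that $F$ extends to $\mathcal S[x]$ with $F(x)=t$. Following Strassen, we do this in two parts.
\begin{enumerate}
\item First pass to the ordered semifield of fractions $a/b$ of $\mathcal S$, where $F$ extends to $F(a)/F(b)$. Monotonicity is preserved because the Strassen conditions (the Archimedean-type property and $n\leq n+1$) allow cancellation up to asymptotic slack.
\item Then choose $t$ between
\[
\sup\{F(a)/F(b): a\leq bx\}
\quad\text{and}\quad
\inf\{F(c)/F(d): dx\leq c\},
\]
with the inequalities read asymptotically. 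Showing $\sup\leq\inf$ is a Hahn–Banach-type separation: from $a\leq bx$ and $dx\leq c$ we get $ad\leq bdx\leq bc$, so monotonicity of $F$ on $\mathcal S$ gives $F(a)F(d)\leq F(b)F(c)$.
\end{enumerate}
To define $F$ on polynomial expressions in $x$, Strassen works with the \emph{asymptotic} preorder $\asympleq$ and uses the characterization of spectral points as maximal monotone homomorphisms into $\R$ of a suitable ordered ring. Concretely, one passes to the Grothendieck ring with the positive cone generated by the order. The result of this construction is that monotone homomorphisms correspond to the extreme points of a compact convex set of normalized monotone additive functionals; these exist by Kadison–Dubois/Krein–Milman arguments.

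The main obstacle is this last step: showing that a monotone \emph{multiplicative} extension exists, not merely an additive one. The approach is to define the spectrum via the Kadison–Dubois representation theorem applied to the preordered ring $\semiring'$. By the Positivstellensatz there, an element is asymptotically nonnegative exactly when every ring homomorphism to $\R$ that is monotone on the cone is nonnegative on it. The given $F_0$ is a point of the real spectrum of the sub-ring. Restriction of real spectra along an inclusion of Archimedean preordered rings is surjective onto the monotone points, which follows from compactness of the spectrum and the duality theorem. One shows surjectivity by contradiction: if $F_0$ were not in the image, compactness would give finitely many elements separating $F_0$ from the image. The duality theorem would then produce an asymptotic inequality in $\semiring'$ between elements of $\semiring$. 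Since $f$ is an order embedding, this inequality transfers back to $\semiring$, where it is violated by $F_0$, a contradiction.
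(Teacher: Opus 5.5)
Your Steps 2--3 do not close. Choosing $t$ between $\sup\{F(a)/F(b) : a \le bx\}$ and $\inf\{F(c)/F(d) : dx \le c\}$ only makes the extension compatible with relations that are linear in $x$ with a single term on each side. An extension to $\mathcal S[x]$ must also satisfy two further requirements. It must be well defined: if $\sum_i s_i x^i = \sum_i s_i' x^i$ in $\semiring'$, the values $\sum_i F(s_i)t^i$ and $\sum_i F(s_i')t^i$ must agree. It must also be monotone for every polynomial relation $\sum_i s_i x^i \le \sum_i s_i' x^i$, such as $a + bx^2 \le cx$, and such relations constrain $t$ by nonlinear conditions your interval does not capture. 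Showing that one $t$ satisfies all of these at once is the whole difficulty, and it is not a Hahn--Banach separation. You acknowledge this yourself, but the Zorn framework is then left open. The passage to a semifield of fractions is also unjustified, since a Strassen-preordered semiring need not allow cancellation.

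Your final paragraph is a different and viable route that makes Steps 1--3 unnecessary, but its decisive step is asserted rather than proved. Compactness of $\asympspec(\semiring')$ (Tychonoff, since $a \le r$ with $r \in \N$ forces $G(a) \le r$, and the defining conditions are closed) together with continuity of $f^*$ makes the image closed. So if $F_0$ is not in the image, you get $a_1,\dots,a_m \in \semiring$ and $\varepsilon>0$ with $\max_i |G(f(a_i)) - F_0(a_i)| \ge \varepsilon$ for all $G \in \asympspec(\semiring')$. This is a disjunction over $i$, whereas duality only converts a \emph{single} inequality $G(v) \le G(w)$, valid for all $G$, into $v \asympleq w$. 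You must encode the separation as one semiring inequality.

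A sum-of-squares construction does this. Let $p_i \in \N$ be the nearest integer to $qF_0(a_i)$ and $u = \lfloor m/4 \rfloor + 1$, with $q \in \N$ so large that $(q\varepsilon - 1/2)^2 > u$. Put $v = u + \sum_i 2qp_i a_i$ and $w = \sum_i (q^2 a_i^2 + p_i^2)$, both in $\semiring$. Then $G(f(w)) - G(f(v)) = \sum_i (qG(f(a_i)) - p_i)^2 - u \ge (q\varepsilon - 1/2)^2 - u > 0$ for every $G \in \asympspec(\semiring')$, so $f(v) \asympleq f(w)$ by duality. Since $f$ is a unital homomorphism, $f(v)^n \le 2^{g(n)} f(w)^n$ reads $f(v^n) \le f(2^{g(n)} w^n)$, and the order embedding gives $v \asympleq w$ in $\semiring$. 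Hence $F_0(v) \le F_0(w)$, contradicting $F_0(w) - F_0(v) \le m/4 - u < 0$.

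With this inserted you have a complete, short derivation from the duality theorem alone; the paper itself gives no proof and only cites Strassen and Wigderson--Zuiddam. The real-spectrum/Kadison--Dubois language can be dropped, and so can the injectivity discussion in Step 1, since the argument never needs $f$ to be injective.
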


\subsection{Embedded tensors}
When discussing higher-order tensors, it is helpful to note that every $\ell$-tensor $\psi$ can be viewed as a $k$-tensor for $k > \ell$ by tensoring onto $\psi$ unit tensors in a way which respects the aforementioend preorder relations.
We refer to $k$-tensors obtained in this way as \textit{embedded} tensors and formalize the definition as follows.
\begin{definition}
  \label{def:embedded tensor}
  Let~$\ell < k$ be positive integers and let $\psi \in \C^{d_1} \ot \dotsb \ot \C^{d_\ell}$ be an $\ell$ tensor.
  The standard embedding $\psi$ as a $k$-tensor is defined as
  \[
    \tilde\psi \coloneqq \psi \ot (\bigotimes_{j =\ell+1}^k e_1) \in \C^{d_1} \ot \dotsb \ot \C^{d_{\ell}} \ot (\C^1)^{\ot (k-\ell)}.
  \]
  More generally, let $S = (s_1, \ldots, s_\ell) \subseteq [k]$ be a given \textit{ordered} subset of $[k]$ of size $\ell$.
  Then the $S$-embedding of $\psi$ is the $k$-tensor obtained by transposing $\widetilde\psi$ so that the first $\ell$ legs of the standard embedding $\widetilde \psi$ are mapped to the positions $(s_1,\ldots, s_\ell)$ in $[k]$.
\end{definition}

\subsection{Upper and lower quantum functionals}
\label{sec:quantum_functionals}
We now recall the upper and quantum functionals as defined in~\cite{cvz23universal}. They are defined with respect to weightings on bipartitions of~$[k]$.
\begin{definition} \label{def:mutually-laminar-bipartitions}
  A bipartition of $[k]$ is an unordered pair~$\{S, \overline S\}$ with~$S \subseteq [k]$, $S \neq \emptyset$ and~$[k] \setminus S = \overline S \neq \emptyset$.
  Two bipartitions~$\{S, \overline S\}$ and~$\{T, \overline T\}$ are \emph{mutually laminar} if we can choose sides that contain each other, i.e. if there are $A\in\{S,\overline S\},B\in\{T,\overline T\}$ such that  $A\subseteq B$ (this is symmetric, as $A\subseteq B\iff\overline{B}\subseteq\overline{A}$ ).
  The set of bipartitions of~$[k]$ is denoted by~$\bipartitions^k$, or just $\bipartitions$ if $k$ is clear from the context.
\end{definition}

We often use just $S$ to denote $\{S,\overline{S}\}$. We also use letters to denote the parties in $S$ instead of numerals when $k$ is small, e.g. $AB$ is the bipartition $\{\{1,2\},[k]\setminus\{1,2\}\}$. This is sometimes denoted by $AB|CD$ (for $k=4$).

\begin{definition} \label{def:laminar-bipartitions-set}
  A set of bipartitions of $[k]$ is called \emph{laminar} if every pair of bipartitions in it is mutually laminar (as defined by~\cref{def:mutually-laminar-bipartitions}).
\end{definition}

The set of probability distributions on bipartitions of~$[k]$ is denoted~$\dists^k$, or simply~$\dists$ if~$k$ is clear from context.
A distribution~$\theta \in \dists^k$ is said to be laminarly-supported if its support is a laminar set of bipartitions, and the set of laminarly-supported distributions is denoted $\distsnc^k$ or $\distsnc$. The set of distributions which are supported only on singleton bipartitions $\{\{i\},[k]\setminus\{i\}\}$ is denoted $\distssing^k$ or $\distssing$.

A laminar set of bipartitions was originally called non-crossing in~\cite{cvz23universal}. It is closely connected to the combinatorics notion of a laminar family of sets: a family in which any two sets either have a containment relation or are disjoint. A set of bipartitions is laminar iff there are consistent choices of sides for each bipartition, such that the chosen sides constitute a laminar family of sets. To see that such a consistent choice exists, fix a bipartition~$S_0 | \overline{S_0}$ with~$\abs{S_0}$ maximal, and then for every bipartition~$S|\overline{S}$ select~$S$ if~$S \subseteq S_0$, otherwise choose $\overline{S}$.

For the definition of the upper quantum functionals, Schur--Weyl duality is essential.
Consider a finite-dimensional complex vector space~$V$.
Then one can consider the actions of~$\GL(V)$ and the symmetric group~$S_n$ on~$V^{\otimes n}$, where~$\GL(V)$ acts via~$g \cdot (v_1 \ot \dotsb \ot v_n) = gv_1 \ot \dotsb \ot gv_n$ for~$g \in \GL(V)$, and~$S_n$ acts via~$\pi \cdot v_1 \ot \dotsb \ot v_n = v_{\pi^{-1}(1)} \ot \dotsb \ot v_{\pi^{-1}(n)}$ for~$\pi \in S_n$.
These actions commute, and in fact the spans of the maps~$\GL(V) \to \End(V^{\ot n})$ and~$S_n \to \End(V^{\ot n})$ are each others commutants.
This implies that~$V^{\ot n}$ admits a decomposition into irreducibles of the form
\[
  V^{\ot n} \cong \bigoplus_{\lambda \vdash n} \mathbb S_\lambda(V) \ot [\lambda]
\]
where the~$\mathbb S_\lambda(V)$ are the irreducible~$\GL(V)$-representations labeled by~$\lambda$, and~$[\lambda]$ are the irreducible~$S_n$-representations labeled by~$\lambda$.

When~$V = V_1 \ot \dotsb \ot V_k$, one can choose to apply Schur--Weyl duality merely on a subset of the factors.
For $b = \{S, \overline{S}\} \in \bipartitions^k$ and $\lambda \vdash n$, we define $P_\lambda^{V_b} = P_\lambda^{V_S} P_{(n)}^{V_{[k]}}$.
Here $P_\lambda^{V_S}$ is the projector onto the $\lambda$-term in
\[
  (V_1 \ot \dotsb \ot V_k)^{\ot n} \cong V_S^{\ot n} \ot V_{\overline{S}}^{\ot n} \cong \bigoplus_{\lambda \vdash n} \mathbb S_\lambda(V_S) \ot [\lambda] \ot (V_{\overline S})^{\ot n}.
\]

For mutually laminar bipartitions, such projectors commute:
\begin{lemma}[{\cite[Lem.~3.2]{cvz23universal}}]
  \label{lem:nc projectors commute}
  Suppose~$b = \{S, \overline{S}\}$ and~$b' = \{T, \overline{T}\}$ are mutually laminar bipartitions of~$[k]$.
  Then for any~$\lambda, \mu \vdash n$, $P_\lambda^{V_b}$ and~$P_\mu^{V_{b'}}$ commute.
\end{lemma}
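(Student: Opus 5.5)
The plan is to write every projector involved as an element of the group algebra of~$S_n$ acting on a subset of the tensor legs, and then to use that isotypic projectors are central. For a subset~$R \subseteq [k]$ and~$\pi \in S_n$, let~$\pi_R$ denote the operator on~$(V_1 \ot \dotsb \ot V_k)^{\ot n}$ that permutes the~$n$ copies of~$V_R = \bigotimes_{j\in R} V_j$ according to~$\pi$ and acts as the identity on the copies of~$V_{\overline R}$. The standard character formula for isotypic projectors gives
\[
  P_\lambda^{V_R} = \frac{\dim[\lambda]}{n!} \sum_{\pi \in S_n} \chi_\lambda(\pi)\, \pi_R ,
  \qquad
  P_{(n)}^{V_{[k]}} = \frac{1}{n!}\sum_{\sigma \in S_n} \sigma_{[k]} .
\]
Two elementary facts will be used repeatedly. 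First, for disjoint~$R, R'$ the operators~$\pi_R$ and~$\tau_{R'}$ commute, since they act on different tensor factors. Second, if~$R \subseteq R'$ then~$\tau_{R'} = \tau_R \tau_{R'\setminus R}$, and~$P_\lambda^{V_R}$ commutes with~$\tau_R$ because~$\chi_\lambda$ is a class function, so~$\sum_\pi \chi_\lambda(\pi)\pi$ is central in~$\C[S_n]$. Combining the two facts, $P_\lambda^{V_R}$ commutes with~$\tau_{R'}$ for every~$R' \supseteq R$ and every~$R'$ disjoint from~$R$.

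\emph{Step 1: each factor commutes with the symmetric projector.} Applying the observation above with~$R' = [k] \supseteq S$ shows that~$P_\lambda^{V_S}$ commutes with every~$\sigma_{[k]}$, hence with~$P_{(n)}^{V_{[k]}}$. In particular each~$P_\lambda^{V_b}$ is a product of two commuting projectors.

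\emph{Step 2: the choice of side does not matter.} I will show that~$P_\lambda^{V_S} P_{(n)}^{V_{[k]}} = P_\lambda^{V_{\overline S}} P_{(n)}^{V_{[k]}}$. On the image of~$P_{(n)}^{V_{[k]}}$ we have~$\pi_S \pi_{\overline S} = \pi_{[k]} = 1$, so~$\pi_S = \pi_{\overline S}^{-1} = (\pi^{-1})_{\overline S}$ there. Reindexing the sum by~$\pi \mapsto \pi^{-1}$ and using~$\chi_\lambda(\pi^{-1}) = \chi_\lambda(\pi)$ (characters of~$S_n$ are real) gives the identity. Consequently~$P_\lambda^{V_b}$ depends only on the bipartition~$b$, and we may freely replace~$S$ by~$\overline S$ and~$T$ by~$\overline T$.

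\emph{Step 3: commuting across laminar bipartitions.} By mutual laminarity and Step~2, we may assume~$S \subseteq T$. Applying the observation with~$R = S$ and~$R' = T$ shows that~$P_\lambda^{V_S}$ commutes with every~$\tau_T$, hence with~$P_\mu^{V_T}$. Together with Step~1, all four projectors~$P_\lambda^{V_S}$, $P_\mu^{V_T}$, $P_{(n)}^{V_{[k]}}$ and~$P_{(n)}^{V_{[k]}}$ pairwise commute. Therefore their products~$P_\lambda^{V_b}$ and~$P_\mu^{V_{b'}}$ commute.

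The only step requiring care is Step~2, the side-independence of the definition. It relies on the reality of~$S_n$-characters and on working inside the symmetric subspace, and without it the reduction to the containment case~$S \subseteq T$ would not be available. The rest is bookkeeping with central elements of the group algebra.
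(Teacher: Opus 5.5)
Your proof is correct. It is essentially the standard argument behind the cited result: Steps~1 and~2 are exactly the two parts of \cref{lem:complementaryProjectionsOnSymmetric}, which let you choose sides with $S \subseteq T$, and Step~3 settles that containment case via centrality of $\sum_\pi \chi_\lambda(\pi)\pi$ in $\C[S_n]$ together with $\tau_T = \tau_S \tau_{T \setminus S}$. The paper gives no proof of its own and only cites \cite[Lem.~3.2]{cvz23universal}.
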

\begin{definition}[Entropy]
  For a probability distribution~$p_1, \dotsc, p_m$ on~$m$ outcomes, we define the Shannon entropy~$H(p) = - \sum_{i=1}^m p_i \log_2(p_i)$.
  Similarly, for a positive definite operator~$\rho\colon \mathcal{H} \to \mathcal{H}$ on a Hilbert space~$\mathcal{H}$, with trace~$1$, we define its von Neumann entropy~$H(\rho)$ as the Shannon entropy of its eigenvalues.
\end{definition}
\begin{definition}[Upper quantum functional]
  \label{def:upper qfunc}
  Let~$\psi \in V_1 \ot \dotsb \ot V_k$ and~$\theta \in \distsnc$. Define
  \[
    E^\theta(\psi) = \sup_{n \geq 1} \sup_{(\lambda^{(b)})} \sum_{b \in \supp \theta} \theta(b) H(\overline{\lambda^{(b)}}), \quad \overline{\lambda^{(b)}} = \frac{\lambda^{(b)}}{n},
  \]
  with the supremum over all tuples $(\lambda^{(b)})_{b \in \supp \theta}$ of Young diagrams $\lambda^{(b)} \vdash n$ such that
  \[
    \left(\prod_{b \in \supp \theta} P_{\lambda^{(b)}}^{V_b}\right) \psi^{\kron n} \neq 0.
  \]
  The product of the projectors is independent of the order, by virtue of~\cref{lem:nc projectors commute}.
  The upper quantum functional~$F^\theta$ is defined by~$F^\theta(t) = 2^{E^\theta}$.
  Note that~$F^\theta(0) = 0$.
\end{definition}

\begin{proposition}[{\cite[Thm.~3.5]{cvz23universal}}]
    \label{prop:upper qfunc properties}
    For~$\theta \in \distsnc^k$, the upper quantum functional~$F^\theta$ satisfies:
    \begin{enumerate}
        \item $F^\theta(\unitk{n}{k}) = n$,
        \item $F^\theta(\psi \oplus \varphi) \leq F^\theta(\psi) + F^\theta(\varphi)$,
        \item $F^\theta(\psi \kron \varphi) \leq F^\theta(\psi) F^\theta(\varphi)$,
        \item $F^\theta(\psi) \geq F^\theta(\varphi)$ when~$\psi \degengeq \varphi$.
    \end{enumerate}
\end{proposition}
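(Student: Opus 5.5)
We follow the strategy of the singleton case in~\cite{cvz23universal}: express membership of Young diagrams in the support of $\psi^{\kron n}$ through the nonvanishing of commuting isotypic projectors, and then use Schur--Weyl duality and Littlewood--Richardson/Kronecker combinatorics to control how supports behave under the operations in question. By \cref{lem:nc projectors commute}, all projectors $P_{\lambda^{(b)}}^{V_b}$ with $b \in \supp\theta$ commute. Their products therefore form a family of orthogonal projectors summing to the projector onto the symmetric subspace of $(V_1\ot\dotsb\ot V_k)^{\ot n}$, which contains $\psi^{\kron n}$. We then treat the four items in turn.

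\textbf{(i) Normalization.} For every bipartition $b$, the flattening of $\unitk{r}{k}$ across $b$ has rank $r$. A nonzero projection $P^{V_b}_\lambda \psi^{\kron n}\neq 0$ forces $\lambda$ to have at most $\operatorname{rank}(\psi_b)$ rows, since $\psi^{\kron n}$ lies in $W^{\ot n}\ot(\cdot)$, where $W$ is the support of the flattening. Hence $H(\overline{\lambda^{(b)}})\le \log_2 r$, which gives $F^\theta\le r$. For the matching lower bound, take all $\lambda^{(b)}$ to be a rectangular diagram $(n/r,\dots,n/r)$ with $r\mid n$. Nonvanishing of the product on $(\unitk{r}{k})^{\kron n}$ should follow by projecting onto a weight space: the component of $(\unitk{r}{k})^{\kron n}$ of type $(n/r)^r$ is a symmetrized sum of basis vectors, and we check that it has nonzero overlap with every rectangular isotypic component simultaneously. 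Alternatively, we can reduce the question to a single coordinate using the $S_r$-symmetry of the unit tensor.

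\textbf{(ii) Subadditivity and (iii) submultiplicativity.} For subadditivity, expand $(\psi\oplus\varphi)^{\kron n}$ as a sum over $m$ of $\binom nm$ copies of $\psi^{\kron m}\ot\varphi^{\kron (n-m)}$, embedded in orthogonal blocks. Each $P^{V_b}_\lambda$ restricted to a block decomposes through Littlewood--Richardson coefficients, so $\lambda^{(b)}\subseteq$ some $\mu^{(b)}+\nu^{(b)}$-type combination, with $\mu^{(b)}$ in the support for $\psi^{\kron m}$ and $\nu^{(b)}$ in the support for $\varphi^{\kron (n-m)}$. The standard estimate $H(\lambda/n)\le \frac mn H(\mu/m)+\frac{n-m}nH(\nu/(n-m))+h(m/n)+o(1)$ then gives, after maximizing over $m$, the bound $\log(2^{E^\theta(\psi)}+2^{E^\theta(\varphi)})$. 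The key technical point is that this decomposition has to be done \emph{simultaneously} for all $b$ in the laminar support. Laminarity is what makes this possible: one can apply Schur--Weyl duality inductively along the laminar tree of sides, with commuting actions at each level. Submultiplicativity is analogous, using Kronecker coefficients in place of Littlewood--Richardson coefficients: $g_{\lambda\mu\nu}\neq0$ implies $H(\lambda)\le H(\mu)+H(\nu)$.

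\textbf{(iv) Monotonicity.} For restriction, $P^{V_b}_\lambda$ commutes with $(M_1\ot\dotsb\ot M_k)^{\ot n}$ up to the usual triangularity: a nonzero $\lambda$-component of the image forces a component $\lambda'\succeq\lambda$ (in dominance order) of the source. Since entropy is Schur-concave, this yields $H(\lambda')\ge H(\lambda)$. For degeneration, nonvanishing of a projection is a Zariski-open condition on orbit closures, so $\varphi\degenleq\psi$ implies that the support of $\varphi$ is contained in the support of $\psi$. This follows by semicontinuity, or by the polynomial-parameter formulation of degeneration together with an $O(\log n)$ loss in $n$.

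\textbf{Main obstacle.} We expect the hardest step to be (ii) and (iii) in the non-singleton laminar setting: establishing the simultaneous Littlewood--Richardson and Kronecker decomposition across nested bipartitions, and the dominance argument in (iv) for projectors mixing $P_{(n)}^{V_{[k]}}$ with the $P_\lambda^{V_S}$.
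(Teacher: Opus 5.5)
The paper does not prove this statement; it quotes it from \cite[Thm.~3.5]{cvz23universal}, so I assess your sketch directly. Your plan for (ii) and (iii) via Littlewood--Richardson and Kronecker coefficients is sound. However, (i) and (iv) each contain a step that fails as written. In (i), the lower bound cannot come from taking every $\lambda^{(b)}$ exactly rectangular, because that product of projectors can vanish identically. Take $k=4$, $r=n=2$ and the laminar support $\{A|BCD,\,B|ACD,\,AB|CD\}$. On the image of $P^{A}_{(1,1)}P^{B}_{(1,1)}$ the transposition of the two copies acts by $-1$ on the $A$-factors and by $-1$ on the $B$-factors, hence by $+1$ on the $AB$-factors, so $P^{AB}_{(1,1)}P^{A}_{(1,1)}P^{B}_{(1,1)}=0$. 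The same happens for every $n=2m$ with $m$ odd, since then $g_{(m,m),(m,m),(m,m)}=0$. So the ``nonzero overlap with every rectangular component simultaneously'' you intend to check is false in general. What is missing is a concentration argument with approximately uniform diagrams. The commuting products $\prod_b P^{V_b}_{\lambda^{(b)}}$ over all tuples sum to the projector onto the symmetric subspace, so the weights $\norm{\prod_b P^{V_b}_{\lambda^{(b)}}\psi^{\kron n}}^2$ sum to $\norm{\psi}^{2n}$. On the other hand, for every $b$ each weight is at most $\norm{\psi}^{2n}\operatorname{tr}(P_{\lambda^{(b)}}\rho_b^{\ot n})\le\norm{\psi}^{2n}(n+1)^{O(1)}2^{-nD(\overline{\lambda^{(b)}}\,\|\,\operatorname{spec}\rho_b)}$ (Keyl--Werner). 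All marginals of $\unitk{r}{k}$ are uniform of rank $r$, so for large $n$ some feasible tuple has every $\overline{\lambda^{(b)}}$ within $\varepsilon$ of uniform. This gives $E^\theta(\unitk{r}{k})\ge\log r-\delta(\varepsilon)$ with $\delta(\varepsilon)\to0$. Equivalently, you can invoke $E^\theta\ge E_\theta$, which is proved this way, together with $H_b(\unitk{r}{k})=\log r$.

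In (iv), your restriction argument has the inequality backwards. If $\lambda'\trianglerighteq\lambda$, then $\overline{\lambda'}$ majorizes $\overline{\lambda}$, and Schur-concavity gives $H(\overline{\lambda'})\le H(\overline{\lambda})$, which is useless for monotonicity. No triangularity is needed at all. For arbitrary linear $M_j$, the map $M_S^{\ot n}$ with $M_S=\bigotimes_{j\in S}M_j$ intertwines the permutation actions, so $P^{V'_S}_\lambda M_S^{\ot n}=M_S^{\ot n}P^{V_S}_\lambda$, and likewise for $P_{(n)}$. Hence $\prod_b P_{\lambda^{(b)}}^{V'_b}\varphi^{\kron n}=(M_1\ot\dotsb\ot M_k)^{\ot n}\prod_b P^{V_b}_{\lambda^{(b)}}\psi^{\kron n}$, so every feasible tuple for $\varphi$ is feasible for $\psi$ with the same diagrams. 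Degeneration then follows by passing to the limit, which is what your last sentence does; note that it silently relies on this exact equivariance. Finally, the simultaneity you flag in (ii) and (iii) is milder than you fear. Fix laminar sides $S_b$ and insert the resolution $\sum\prod_b Q_b=I$ by the $S_m\times S_{n-m}$-isotypic (resp.\ $S_n\times S_n$-isotypic) projectors $Q_b$ on the $S_b$-factors; these commute among themselves. Then reorder each commuting family so that $P_{\lambda^{(b)}}$ sits next to $Q_b$. The resulting term vanishes as soon as one $c^{\lambda^{(b)}}_{\mu^{(b)}\nu^{(b)}}$ (resp.\ $g_{\lambda^{(b)}\mu^{(b)}\nu^{(b)}}$) is zero. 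Also, your entropy estimate comes from $\lambda\trianglerighteq\mu\cup\nu$, not from a comparison with $\mu+\nu$.
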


Suppose~$0 \neq \psi \in V_1 \ot \dotsb \ot V_k$ and~$S \subsetneq [k]$ is non-empty.
The~$S$-flattening of~$\psi$, denoted by~$\psi_S$, is~$\psi$ reinterpreted as a linear map~$V_{\overline{S}}^* \to V_{S}$.
\begin{definition}
    For a bipartition $b=\{S,\overline{S}\}$ ($ S,\overline{S}\neq\emptyset$), its flattening rank is the function $\trank_{b}(\psi)=\rank(\psi_S)$.
    We may also denote it by~$\trank_S(\psi)$.
\end{definition}
Note that~$\rank(\psi_S) = \rank(\psi_{\overline{S}})$, so~$R_b$ depends only on the bipartition.

Suppose now that each of the~$V_j$ is endowed with some inner product.
Then the adjoint~$\psi_S^*\colon V_{S}^* \to V_{\overline{S}}$ can be identified with a linear map~$V_{S}\to V_{\overline{S}}^*$, and so we may consider the composition
\[
  \rho_S \coloneq \frac{\psi_S \psi_S^*}{\norm{\psi}_2^2}
\]
This is a positive semidefinite operator on~$V_S$ with trace~$1$, and we will refer to it as the~$S$-marginal of~$\psi$.
For the bipartition~$b = \{S, \overline{S}\}$, we will write
\[
  H_b(\psi) = H_S(\psi) \coloneq H(\rho_S)
\]
for the marginal entropy of~$\psi$ across the bipartition~$b$.
This depends only on~$b$ and not on~$S$, as one can show that~$\rho_S$ and~$\rho_{\overline{S}}$ are isospectral.
If~$\psi = 0$ we define~$H_b(\psi) = -\infty$.

For a probability distribution $\theta\in \dists$, we let $H_\theta(\psi)$ be the weighted average of the marginal entropies of the normalization of the tensor $\psi$:
\[
    H_\theta(\psi) = \sum_{b\in B}\theta(b)H_b(\psi)
\]

\begin{definition}[Lower quantum functional]
  \label{def:lower qfunc}
  Let~$0 \neq \psi \in V_1 \ot \dotsb \ot V_k$ and~$\theta \in \dists$. Define
  \[
    E_\theta(\psi) = \sup_{\substack{A_1, \dotsc, A_k \\ A_j \in \GL(V_j)}} H_\theta ((A_1 \ot \dotsb \ot A_k) \psi)
  \]
  Then the lower quantum functional~$F_\theta$ is defined by~$F_\theta(\psi) = 2^{E_\theta}$.
  We additionally define~$F_\theta(0) = 0$.
\end{definition}
\begin{remark}
    One connection between the above and quantum information theory is the following.
    If one interprets~$\psi$ as a multipartite quantum state and uses Dirac notation instead, then~$\rho = \frac{\ket{\psi}\bra{\psi}}{\braket{\psi|\psi}} = \frac{\psi \psi^*}{\norm{\psi}^2}$ is the associated density matrix.
    The object~$\rho_S$ is precisely the \emph{reduced density matrices} of~$\rho$ on~$S$, i.e., $\rho_S$ is the \emph{partial trace} of~$\rho$ over~$\overline{S}$.
\end{remark}

We later require one additional lemma, providing a slightly different characterization of the lower quantum functional:
\begin{lemma}
  \label{lem:lower qfunc proj orbit closure}
  Let~$V = V_1 \ot \dotsb \ot V_k$, and let~$[\varphi] \in \Proj(V)$ denote the projective equivalence class of~$\varphi \in V \setminus \{0\}$.
  We have that
  \begin{align*}
    E_\theta(\psi) = \max_{[\varphi] \in \overline{\SL \cdot [\psi]}} H_{\theta}(\varphi) = \max_{\varphi \in \overline{\GL \cdot \psi}} H_\theta(\varphi).
  \end{align*}
\end{lemma}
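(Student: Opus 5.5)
The plan is to prove the two equalities by passing from the group $\GL = \GL(V_1)\times\dotsb\times\GL(V_k)$ to its orbit closure, and then to the projectivization, using two simple properties of $H_\theta$. First, $H_\theta$ is scale invariant: $H_\theta(c\varphi) = H_\theta(\varphi)$ for $c \neq 0$, because each marginal $\rho_S$ is normalized by $\norm{\varphi}_2^2$. Hence $H_\theta$ descends to a function on $\Proj(V)$. Second, $H_\theta$ is continuous on $V\setminus\{0\}$ (and hence on $\Proj(V)$). Indeed, $\varphi \mapsto \rho_S(\varphi)$ is continuous there, eigenvalues depend continuously on the matrix, and the von Neumann entropy is continuous on the compact set of density matrices, since $x\mapsto -x\log x$ is continuous on $[0,1]$. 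At points where a marginal drops rank, entropy is still continuous, so there is no problem.

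For the equality $E_\theta(\psi) = \sup_{\varphi\in\overline{\GL\cdot\psi}} H_\theta(\varphi)$, the inequality ``$\le$'' is trivial. For ``$\ge$'', let $\varphi\neq 0$ lie in the closure, say $\varphi = \lim g_m\psi$. For large $m$ we have $g_m\psi\neq0$, and continuity gives $H_\theta(g_m\psi)\to H_\theta(\varphi)$, so $H_\theta(\varphi)\le E_\theta(\psi)$. The point $0$ lies in the closure but has $H_\theta(0) = -\infty$ by convention, so it does not affect the supremum.

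For the projective formulation, I will show that the image of $\GL\cdot\psi\setminus\{0\}$ in $\Proj(V)$ is $\SL\cdot[\psi]$. Any $g=(g_1,\dots,g_k)$ factors as $g_j = c_j s_j$ with $s_j\in\SL(V_j)$ and $c_j$ a suitable root of $\det g_j$, so $g\psi$ is a scalar multiple of $s\psi$. Scale invariance then gives $\sup_{g} H_\theta(g\psi) = \sup_{s\in\SL} H_\theta([s\psi])$. By the same continuity argument on $\Proj(V)$, this supremum equals $\sup$ over $\overline{\SL\cdot[\psi]}$.

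It remains to upgrade each supremum to a maximum. I expect this step to be the only real point needing care. Projective space is compact, so $\overline{\SL\cdot[\psi]}$ is compact, and $H_\theta$ is continuous on it and real-valued, since projective points are nonzero vectors. The maximum is therefore attained at some $[\varphi]$. To transfer this to the affine closure, I need a representative of $[\varphi]$ in $\overline{\GL\cdot\psi}$. This follows because $\overline{\GL\cdot\psi}$ is a cone: scalar matrices lie in $\GL$, so $\GL\cdot\psi$ is stable under $\C^\times$. If $[s_m\psi]\to[\varphi]$, I will normalize by setting $\varphi_m = s_m\psi/\norm{s_m\psi}\in\GL\cdot\psi$. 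A subsequence converges on the unit sphere to a unit vector $\varphi'$ with $[\varphi']=[\varphi]$, and $\varphi'\in\overline{\GL\cdot\psi}$. Hence the affine supremum is attained at $\varphi'$ and equals the projective maximum. Combining all of this gives the lemma.
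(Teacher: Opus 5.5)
Your proof is correct and follows the same route as the paper. It uses scale invariance of $H_\theta$ to pass between $\GL$ and $\SL$ and between the affine and projective formulations, and continuity of $H_\theta$ on the compact space $\Proj(V)$ to turn the supremum over the orbit into a maximum over its closure. You additionally fill in details the paper leaves implicit: the cone argument giving an affine representative in $\overline{\GL\cdot\psi}$, and the treatment of $0$ in the affine closure.
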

\begin{proof}
  The equality of the two maxima is clear, as~$H_\theta$ is invariant under scalar multiplication.
  Clearly we have
  \[
    E_\theta(\psi) = \sup_{[\varphi] \in \SL \cdot [\psi]} H_\theta(\psi) \leq \max_{[\varphi] \in \overline{\SL \cdot [\psi]}} H_{\theta}(\varphi)
  \]
  The supremum equals the maximum over the orbit closure, as $H_\theta$ is a continuous function on~$\Proj(V)$.
\end{proof}

\begin{proposition}[{\cite[Thm.~3.19]{cvz23universal}}]
    \label{prop:lower qfunc properties}
    For~$\theta \in \dists$, the lower quantum functional~$F_\theta$ satisfies:
    \begin{enumerate}
        \item $F_\theta(\unitk{n}{k}) = n$,
        \item $F_\theta(\psi \oplus \varphi) \geq F_\theta(\psi) + F_\theta(\varphi)$,
        \item $F_\theta(\psi \kron \varphi) \geq F_\theta(\psi) F_\theta(\varphi)$,
        \item $F_\theta(\psi) \geq F_\theta(\varphi)$ when~$\psi \degengeq \varphi$.
    \end{enumerate}
\end{proposition}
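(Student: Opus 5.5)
The plan is to verify the four properties in turn. Normalization, super-additivity and super-multiplicativity come from explicit choices of local operators together with a direct computation of marginals. Monotonicity comes from a continuity argument based on \cref{lem:lower qfunc proj orbit closure}.

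\textbf{Normalization.} For every bipartition $b$ and every $\varphi \neq 0$ we have $H_b(\varphi) \leq \log_2 \trank_b(\varphi)$, since $\rho_S$ has rank $\trank_S(\varphi)$. Flattening ranks are invariant under $\GL(V_1)\times\dotsb\times\GL(V_k)$. They are also lower semicontinuous, hence cannot increase in a limit, which covers the orbit closure appearing in \cref{lem:lower qfunc proj orbit closure}. Since $\trank_b(\unitk{n}{k}) = n$ for all $b$, this gives $E_\theta(\unitk{n}{k}) \leq \log_2 n$. Conversely, taking $A_j = I$, every marginal of $\unitk{n}{k}$ is the maximally mixed state $\tfrac1n I_n$ on the span of $e_1^{\otimes |S|},\dots,e_n^{\otimes|S|}$. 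So $H_\theta(\unitk{n}{k}) = \log_2 n$, and $F_\theta(\unitk{n}{k}) = n$.

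\textbf{Super-additivity and super-multiplicativity.} Fix $A=(A_j)$ and $B=(B_j)$ with $H_\theta(A\psi)$ close to $E_\theta(\psi)$ and $H_\theta(B\varphi)$ close to $E_\theta(\varphi)$.
\begin{itemize}
\item For the Kronecker product, apply $(A_j\kron B_j)_j$ to $\psi\kron\varphi$. The $S$-marginal of a Kronecker product is $\rho_S^{A\psi}\otimes\rho_S^{B\varphi}$, up to reordering factors. Von Neumann entropy is additive on tensor products, so $H_\theta$ is additive, and taking suprema gives $F_\theta(\psi\kron\varphi)\geq F_\theta(\psi)F_\theta(\varphi)$.
\item For the direct sum, apply $(c_jA_j\oplus c_j'B_j)_j$, where the scalars are chosen so that $\norm{\cdot}^2$ of the two summands is split as $p$ and $1-p$ for a chosen $p\in(0,1)$. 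Only the first leg needs nontrivial scalars. The two summands are supported on orthogonal blocks in every leg, so the $S$-flattening is block diagonal and $\rho_S = p\,\rho_S^{A\psi}\oplus(1-p)\,\rho_S^{B\varphi}$. Hence
\[
H_\theta = h(p) + pH_\theta(A\psi)+(1-p)H_\theta(B\varphi),
\]
where $h$ is the binary entropy and we use $\sum_b\theta(b)=1$.
\item Maximizing over $p$ gives $\log_2\bigl(2^{H_\theta(A\psi)}+2^{H_\theta(B\varphi)}\bigr)$. Taking suprema over $A$ and $B$ yields $F_\theta(\psi\oplus\varphi)\geq F_\theta(\psi)+F_\theta(\varphi)$.
\end{itemize}

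\textbf{Monotonicity under degeneration.} I expect this to be the main obstacle, because the maps $M_j(m)$ are neither invertible nor square. The key intermediate claim is: for arbitrary linear maps $M_j$ with $\varphi = (M_1\otimes\dotsb\otimes M_k)\psi\neq0$, we have $H_\theta(\varphi)\leq E_\theta(\psi)$. The argument has three parts.
\begin{enumerate}
\item Write $M_j = \iota_j Q_j$, where $Q_j\colon V_j\to V_j$ has the same rank as $M_j$ and $\iota_j$ is an injection.
\item Marginal spectra, and hence $H_\theta$, are unchanged by isometric embeddings. Moreover, by a Gram--Schmidt/QR-type argument, an injective map changes the spectra in the same way as an invertible map on a subspace. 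So it suffices to consider maps of the form $Q_j$ followed by an invertible map.
\item Such a $Q_j$ is conjugate to a coordinate projection, which is the limit of the invertible diagonal maps $\mathrm{diag}(1,\dots,1,\varepsilon,\dots,\varepsilon)$ as $\varepsilon\to0$. Therefore $\varphi$ lies, up to isometry, in $\overline{\GL\cdot\psi}$, and \cref{lem:lower qfunc proj orbit closure} gives the claim.
\end{enumerate}
Given the claim, let $\varphi = \lim_m M(m)\psi$ and $A\in\GL$. Then $A\varphi = \lim_m (AM(m))\psi$, and $A\varphi\neq0$. Since $H_\theta$ is continuous on nonzero vectors, $H_\theta(A\varphi) = \lim_m H_\theta(AM(m)\psi)\leq E_\theta(\psi)$. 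Taking the supremum over $A$ gives $F_\theta(\varphi)\leq F_\theta(\psi)$.
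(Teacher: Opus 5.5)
The paper does not prove this proposition; it imports it from \cite[Thm.~3.19]{cvz23universal}. Your argument is correct and follows the standard route. Explicit invertible local operators give normalization and super-multiplicativity. Block-diagonal marginals, the grouping identity $H(p\rho\oplus(1-p)\sigma)=h(p)+pH(\rho)+(1-p)H(\sigma)$ and $\max_p[h(p)+pa+(1-p)b]=\log_2(2^a+2^b)$ give super-additivity. Approximating arbitrary local maps by invertible ones gives monotonicity under degeneration.

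Two statements in your monotonicity reduction are inaccurate as written, though neither affects the conclusion.
\begin{enumerate}
\item If $\dim V_j'<\dim V_j$, there is no injection $V_j\to V_j'$. Instead, take $Q_j$ to be the orthogonal projection onto $(\ker M_j)^\perp$ and $\iota_j=M_j$. This $\iota_j$ is injective only on $\operatorname{im}Q_j$, which is all the QR/polar step needs. Alternatively, first pad $V_j'$ isometrically, which does not change $H_\theta(\varphi)$.
\item A general rank-$r$ endomorphism is not conjugate to a coordinate projection (a nonzero nilpotent is a counterexample); it is only equivalent to one, $X\Pi Y$ with $X,Y$ invertible. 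With the orthogonal-projection choice of $Q_j$ your statement becomes true. In any case, all you actually use is $\overline{\GL(V_j)}=\End(V_j)$, e.g.\ $R_jQ_j+\varepsilon I\in\GL(V_j)$ for all but finitely many $\varepsilon$.
\end{enumerate}
A slightly shorter way to organize the same step is to pad each $V_j$ and $V_j'$ into a common $\C^{D_j}$. By the same QR argument, both $H_\theta$ and $E_\theta$ are unchanged by padding. You can then conclude from $\overline{\GL\cdot\varphi}\subseteq\overline{\GL\cdot\psi}$ together with \cref{lem:lower qfunc proj orbit closure}.
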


The upper and lower quantum functionals agree for singleton-supported distributions, hence form universal spectral points:
\begin{theorem}[{\cite[Thm.~3.30]{cvz23universal}}]
  \label{thm:sing-qfunc-agree}
  Let~$\theta \in \distssing$. Then~$E_\theta(\psi) = E^\theta(\psi)$.
\end{theorem}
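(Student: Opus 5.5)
The plan is to prove the two inequalities $E_\theta(\psi)\le E^\theta(\psi)$ and $E^\theta(\psi)\le E_\theta(\psi)$ separately. Both will follow from one geometric object: the moment (entanglement) polytope $\Delta(\psi)$ of the orbit closure $\overline{G\cdot\psi}$, where $G=\GL(V_1)\times\dotsb\times\GL(V_k)$. It is defined as the set of tuples of normalized marginal spectra
\[
  \bigl(\mathrm{spec}\,\rho_{\{1\}}(\varphi),\dots,\mathrm{spec}\,\rho_{\{k\}}(\varphi)\bigr),\qquad 0\neq\varphi\in\overline{G\cdot\psi}.
\]
For $\theta\in\distssing$ both functionals are maxima of the same continuous function,
\[
  \Lambda=(\lambda^{(1)},\dots,\lambda^{(k)})\;\mapsto\;\sum_i\theta(\{i\})H(\lambda^{(i)}),
\]
taken over different subsets of $\Delta(\psi)$. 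Throughout I use that $H$ of a marginal depends only on its spectrum.

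\textbf{Step 1: the support of $E^\theta$.} I will characterize, for singleton bipartitions $b_i=\{\{i\},[k]\setminus\{i\}\}$, when
\[
  \Bigl(\prod_i P^{V_{b_i}}_{\lambda^{(i)}}\Bigr)\psi^{\kron n}\neq 0 .
\]
Since $\psi^{\kron n}\in\mathrm{Sym}^n(V)$ with $V=V_1\ot\dotsb\ot V_k$, and $P^{V_{\{i\}}}_{\lambda}$ acting on the symmetric subspace is the $\GL(V_i)$-isotypic projector of type $\lambda$ (Schur--Weyl duality applied to the $i$-th factor), the product is the $G$-isotypic projector of type $\Lambda$ on $\mathrm{Sym}^n(V)$. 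Hence nonvanishing is equivalent to the $G$-submodule of $\mathrm{Sym}^n(V)$ spanned by $G\cdot\psi^{\ot n}$ containing an irreducible of type $\Lambda$. That submodule is dual to the degree-$n$ part $\C[\overline{G\cdot\psi}]_n$ of the coordinate ring of the orbit closure. So the condition says that $\Lambda$ occurs as the weight of a highest weight vector in $\C[\overline{G\cdot\psi}]_n$.

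\textbf{Step 2: invoke the Mumford--Ness--Brion theorem.} That theorem says $\Delta(\psi)$ is exactly the closure of the set of $\Lambda/n$ arising as in Step 1, and that this set is all rational points of a convex polytope.
\begin{itemize}
  \item For $E^\theta\le E_\theta$: every admissible $\Lambda/n$ in the definition of $E^\theta$ is realized as the marginal spectra of some $\varphi\in\overline{G\cdot\psi}$. Its objective value is therefore $H_\theta(\varphi)$, which is at most $\max_{\varphi\in\overline{G\cdot\psi}}H_\theta(\varphi)=E_\theta(\psi)$ by \cref{lem:lower qfunc proj orbit closure}.
  \item For $E_\theta\le E^\theta$: the maximizer $\varphi$ in \cref{lem:lower qfunc proj orbit closure} has spectra in $\Delta(\psi)$. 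Approximate them by admissible points $\Lambda/n$ and use continuity of Shannon entropy to get $E^\theta(\psi)\ge H_\theta(\varphi)-\eps$ for every $\eps>0$.
\end{itemize}
The second inequality also follows from the general bound $F_\theta\le F^\theta$, but the polytope argument gives both directions at once.

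\textbf{Main obstacle.} The delicate point is Step 1 combined with the exact form of Brion's theorem. I need to check carefully that the projectors $P^{V_b}_\lambda=P^{V_S}_\lambda P^{V_{[k]}}_{(n)}$ restricted to the symmetric subspace coincide with the isotypic projectors for the full group $G$, with the correct identification of $\lambda$ versus its dual. I also need to handle the fact that only the closure of the admissible set, not the set itself, equals $\Delta(\psi)$; that is precisely why continuity is needed. This step is special to singleton weightings: for a non-singleton bipartition $S$ the projector is isotypic for $\GL(V_S)$, which does not act on the orbit, so the moment-map argument breaks down. This is consistent with the separation the paper later establishes.
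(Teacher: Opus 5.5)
Your proposal is correct and follows the same route as the cited source: this paper gives no proof of its own and only points to the moment-polytope argument of \cite[Thm.~3.30]{cvz23universal}, which likewise uses Brion's theorem to identify the admissible normalized Young diagrams for the singleton projectors applied to $\psi^{\kron n}$ with the rational points of the moment polytope, and Ness--Mumford to identify that polytope with the set of singleton marginal spectra over $\overline{\GL\cdot\psi}$, so that both functionals maximize the same continuous function over the same compact set. The only detail you leave implicit is the case where $\supp\theta$ omits some singletons $\{j\}$; there you first insert $\sum_{\lambda\vdash n} P^{V_{\{j\}}}_\lambda = I$ for each missing party, which does not change the supremum.
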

\begin{corollary}[{\cite[Cor.~3.31]{cvz23universal}}]
  \label{cor:sing-qfunc-spectral}
  For~$\theta \in \distssing$, $F^\theta = F_\theta$ is a point in the asymptotic spectrum~$\asympspec(\T^k)$ of~$k$-tensors.
\end{corollary}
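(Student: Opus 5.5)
The plan is to combine \cref{thm:sing-qfunc-agree} with the two lists of inequalities in \cref{prop:upper qfunc properties} and \cref{prop:lower qfunc properties}, which squeeze the sub- and super-properties into equalities.

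First I check that $\distssing \subseteq \distsnc$, so that $F^\theta$ is defined and \cref{prop:upper qfunc properties} applies. Take two distinct singleton bipartitions $\{i\}|\overline{\{i\}}$ and $\{j\}|\overline{\{j\}}$ with $i \neq j$. Then $\{i\} \subseteq [k]\setminus\{j\}$, so they are mutually laminar. Next, \cref{thm:sing-qfunc-agree} gives $E_\theta = E^\theta$, hence $F := F^\theta = F_\theta$ on all of $\T^k$.

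Then I verify the defining properties of a spectral point one by one.
\begin{itemize}
\item \emph{Normalization.} $F(\unitk{n}{k}) = n$ holds by item (i) of either proposition.
\item \emph{Additivity.} Item (ii) of \cref{prop:upper qfunc properties} gives $F(\psi\oplus\varphi) \leq F(\psi)+F(\varphi)$. Item (ii) of \cref{prop:lower qfunc properties} gives the reverse inequality. So $F$ is additive.
\item \emph{Multiplicativity.} Items (iii) of the two propositions give $F(\psi\kron\varphi) \leq F(\psi)F(\varphi)$ and $F(\psi\kron\varphi) \geq F(\psi)F(\varphi)$, so $F$ is multiplicative.
\item \emph{Monotonicity.} If $\varphi \leq \psi$, take constant sequences $M_j(m) = M_j$ to see that $\varphi \degenleq \psi$. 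Item (iv) then gives $F(\varphi) \leq F(\psi)$.
\item \emph{Range.} Every $\psi \in \T^k$ is nonzero. Choose indices $i_1,\dots,i_k$ with $\psi_{i_1\dots i_k} \neq 0$, and let $M_j = e_{i_j}^{\mathsf T}\colon \C^{d_j} \to \C^1$. This restricts $\psi$ to a nonzero multiple of $\unitk{1}{k}$, and rescaling one of the $M_j$ gives exactly $\unitk{1}{k}$. Monotonicity and normalization then give $F(\psi) \geq F(\unitk{1}{k}) = 1$. Finiteness is clear, since $F_\theta(\psi)$ is bounded by $2$ raised to a weighted average of finite entropies.
\end{itemize}

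Hence $F$ is a $\leq$-monotone unital semiring homomorphism $\T^k \to \R_{\geq 1}$, that is, $F \in \asympspec(\T^k)$.

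I expect no step to be a real obstacle: all the substance lies in \cref{thm:sing-qfunc-agree}. The only points needing care are that singleton distributions are laminar, so $F^\theta$ is defined, and that restriction implies degeneration, so the monotonicity in item (iv) applies.
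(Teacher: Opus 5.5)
Your proposal is correct and follows essentially the same route as the cited source, which the paper mirrors in its proof of \cref{prop:Tnck is a semiring}: the agreement from \cref{thm:sing-qfunc-agree} squeezes the sub-additivity and sub-multiplicativity of $F^\theta$ against the super-additivity and super-multiplicativity of $F_\theta$, while normalization and monotonicity (under degeneration, hence under restriction) carry over directly. Your extra checks are also correct and fill in details the paper leaves implicit: that singleton supports are laminar, and that $F(\psi)\geq 1$ because $\psi$ restricts to $\unitk{1}{k}$.
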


\section{Separating upper and lower quantum functionals}\label{sec:separation}

In this section we establish separations between the upper and lower quantum functionals.
In~\cref{subsec:qualitative-separation} we establish that such separations exist for all~$k \geq 4$ and $\theta \in \distsnc^k \setminus \distssing^k$, using the Kempf--Ness theorem~\cite{kempf1979stability,nessStratificationNullCone1984}.
In~\cref{subsec:quantitative-separation} we strengthen these results to quantitative separations, by proving upper bounds on the lower quantum functional.
This strengthening allows us to show that separation between the upper and lower quantum functionals occurs generically, and that the lower quantum functionals are not spectral points.

\subsection{Qualitative separation}
\label{subsec:qualitative-separation}

We first establish the fact that separations exist.
\begin{theorem}
    \label{thm:upper lower separation general k}
    Let $k \geq 4$ and let $\theta \in \distsnc^k \setminus \distssing^k$ be a distribution on bipartitions of $[k]$ such that $\theta$ is laminar and also non-singleton-supported.
    Then there exists a $k$-tensor $\psi$ such that
    \begin{equation}
        E_{\theta}(\psi)<E^{\theta}(\psi).
    \end{equation}
\end{theorem}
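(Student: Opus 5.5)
The plan is to construct, for $k=4$, a one-parameter family of tensors $\psi_p\in(\C^2)^{\ot 4}$ (or $(\C^d)^{\ot 4}$) with three properties. Each singleton marginal $\rho_{\{i\}}$ is maximally mixed, so $\psi_p$ is a critical point for the $\SL$-action. The flattening $(\psi_p)_{AB}$ has full rank $d^2$. The marginal $\rho_{AB}$ is \emph{not} maximally mixed, so $H_{AB}(\psi_p)<2\log d$. For example, one can take $\psi_p=\sqrt{p}\,\phi_1+\sqrt{1-p}\,\phi_2$, where $\phi_1,\phi_2$ are orthogonal tensors with maximally mixed single-site marginals, chosen so that the $AB$-spectrum is non-uniform but nonvanishing. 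A natural choice is a superposition of the $AB|CD$ and $AC|BD$ products of maximally entangled pairs with unequal weights. We then reduce general $k$ and general $\theta$ to this case by embedding, and by choosing the parties so that the non-singleton bipartition $S|\overline S$ in $\supp\theta$ plays the role of $AB|CD$.

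\textbf{Lower functional.} By \cref{lem:lower qfunc proj orbit closure}, $E_\theta(\psi)$ is a maximum of $H_\theta$ over the orbit closure $\overline{\SL\cdot[\psi]}$. Since all singleton marginals of $\psi_p$ are maximally mixed, the moment map vanishes at $\psi_p$. The Kempf--Ness theorem (\cref{thm:kempf-ness-general}) then says that the $\SL$-orbit of $\psi_p$ is closed, and that the only points of the orbit closure whose singleton marginals are all maximally mixed are the points $K\cdot\psi_p$ with $K$ in the product of unitary groups. Every $\varphi$ in the orbit closure satisfies $H_\theta(\varphi)\le \sum_b\theta(b)\log\dim V_{b\text{-side}}$. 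Equality requires maximal entropy on every $b\in\supp\theta$. For laminar $\theta$ this forces maximal entropy on every singleton, either because the singletons are in the support or by subadditivity combined with the marginal constraints induced by the laminar bipartitions. I expect this forcing step to be the delicate point: it needs care in choosing dimensions so that maximal $S$-entropy forces maximal single-site entropies. For $k=4$ with $S=AB$ and all local dimensions equal, a maximally mixed $\rho_{AB}$ implies maximally mixed $\rho_A$ and $\rho_B$, and likewise for $CD$, so the forcing goes through. A maximizer of $H_\theta$ at the maximal value would therefore lie in $K\cdot\psi_p$. But there $H_{AB}$ is unitarily invariant and submaximal, which is a contradiction. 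Hence $E_\theta(\psi_p)$ is strictly below the maximum $\sum_b\theta(b)\log(\text{dimension})$.

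\textbf{Upper functional.} We show that $E^\theta(\psi_p)$ attains the maximum by exhibiting, for large $n$, Young diagrams near uniform rows that are not annihilated. For singleton bipartitions, \cref{thm:sing-qfunc-agree} gives upper equals lower, and the lower functional is maximal there since the singleton marginals are uniform. For $AB$, full flattening rank gives $F^{AB}=\trank_{AB}=d^2$, which is maximal. The difficulty is \emph{jointly} achieving the maxima, since the commuting projectors must be nonzero simultaneously. Here I would use that the projectors $P_\lambda^{V_b}$ for a laminar support commute. The isotypic decomposition of $\psi_p^{\kron n}$ across the $AB$ flattening is governed by the Keyl--Werner type estimate: the weight of $\psi^{\kron n}$ on $\lambda$ concentrates near the spectrum of $\rho_{AB}$. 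Nonvanishing for near-uniform $\lambda$ instead follows from full rank, since uniform $\lambda$ lies in the moment polytope of $\overline{\GL_{AB}\cdot\psi}$. I would try to get nonvanishing of the joint projection by first degenerating $\psi_p$, within its $\GL_A\times\GL_B\times\GL_C\times\GL_D$ orbit closure, to a tensor with simultaneously uniform $AB$ and singleton marginals. This is the main obstacle, and it seems to contradict the Kempf--Ness rigidity above. So the argument should instead use an element of $\GL_{AB}\times\GL_{CD}$, which is not a local operation, and this is only valid for the upper functional because $P^{V_b}$ commutes with $\GL(V_S)^{\ot n}$.

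Alternatively, and perhaps more cleanly, I would directly show $P_{\lambda}^{V_{AB}}\prod_i P_{\mu_i}^{V_i}\psi_p^{\kron n}\neq0$ for rectangular $\lambda,\mu_i$ (rows of length $n/d^2$ and $n/d$), using that $\psi_p$ lies in the $\GL_{AB}\times\GL_{CD}$ orbit of a tensor with all the required marginals uniform and that rectangular projectors are invariant under $\SL$ on the corresponding factor. Finally, we extend to $k\ge4$ via the embedding of \cref{def:embedded tensor}, grouping the parties in $S$ and $\overline S$ according to the laminar structure. Both functionals of the embedded tensor reduce to those of the $4$-tensor, because trivial legs contribute no entropy.
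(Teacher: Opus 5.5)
Your lower-functional argument and your embedding step match the paper. The lower-functional argument uses uniform singleton marginals, the fact that maximal $AB$-entropy forces maximal singleton entropies, and Kempf--Ness rigidity to place any maximizer in $K\cdot[\psi_p]$, where $H_{AB}$ is submaximal. For the embedding, you should say explicitly that laminarity forces the restricted weights on $AC|BD$ and $AD|BC$ to vanish, so the induced $4$-party distribution is laminar. Your example components $\Phi^{AB}\ot\Phi^{CD}$ and $\Phi^{AC}\ot\Phi^{BD}$ are not orthogonal (their overlap is $1/d$). The normalized superposition still has uniform singleton marginals and a full-rank, non-uniform $AB|CD$ spectrum, so it still serves.

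The genuine gap is the upper functional. You never show that the commuting product $P^{AB|CD}_\lambda\prod_i P^{V_i}_{\mu_i}$ is nonzero on $\psi_p^{\kron n}$ for maximal-entropy shapes, and you flag this yourself. The degeneration route is impossible, since it would contradict your own lower bound. The $\GL_{AB}\times\GL_{CD}$ route fails as stated: $P^{V_A}_\mu$ does not commute with $\GL(V_A\ot V_B)^{\ot n}$. So joint nonvanishing for a uniform-marginal $\psi'$ in that orbit does not transfer to $\psi_p$.

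The missing observation is that the singleton projectors are automatically satisfied on the full-height rectangular $AB$-component.
\begin{itemize}
\item For $\lambda=(m^{d^2})$, the $\lambda$-isotypic component of $V_{AB}^{\ot n}$ carries the character $\det^m$ of $\GL(V_{AB})$. Hence it consists of $\SL(V_{AB})$-invariant vectors.
\item Since $\SL(V_A)\ot\SL(V_B)\subseteq\SL(V_A\ot V_B)$ (equivalently, $\det^m$ restricts to $\det^{dm}\ot\det^{dm}$ on $\GL_d\times\GL_d$), these vectors lie in the $((dm)^d)$-isotypic components for $A$ and for $B$.
\item Because $P^{V_{AB}}_\lambda$ and $P^{V_{CD}}_\lambda$ agree on symmetric vectors, the same holds for $C$ and $D$.
\end{itemize}
Hence $P^{AB|CD}_\lambda\prod_i P^{V_i}_{((dm)^d)}\psi^{\kron n}=P^{AB|CD}_\lambda\psi^{\kron n}$. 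For $n=d^2$ and $\lambda=(1^{d^2})$, the norm of the right-hand side is proportional to $\abs{\det(\psi_{AB|CD})}$, which is nonzero by full flattening rank. This gives $E^\theta(\psi_p)\ge M^\theta(\psi_p)$ directly, with no degeneration or non-local transformation. It is exactly how the paper handles $\mathsf{S}_p$.
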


We begin with a simple upper bound on upper quantum functionals of laminarly-supported distributions.
\begin{proposition}
    \label{prop:E_upper_M_bound}
    Let $\theta \in \distsnc^k$ be a laminarly-supported distribution\footnotemark{} on bipartitions of $[k]$.
    Then for any $k$-tensor $\psi$,
    \begin{equation*}
        E^{\theta}(\psi) \leq M^{\theta}(\psi)
    \end{equation*}
    where $M^{\theta}(\psi)$ is defined as the $\theta$-sum of the logarithms of the flattening ranks of $\psi$:
    \begin{equation*}
        M^{\theta}(\psi) = \sum_{b \in \supp \theta}\theta(b) \log \trank_{b}(\psi).
    \end{equation*}
\end{proposition}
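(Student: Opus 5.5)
We must show that $E^\theta(\psi) \le \sum_b \theta(b)\log \trank_b(\psi)$. The plan is to show, for each $n$ and each admissible tuple $(\lambda^{(b)})_{b\in\supp\theta}$ with $\bigl(\prod_b P^{V_b}_{\lambda^{(b)}}\bigr)\psi^{\kron n}\neq 0$, that every individual $\lambda^{(b)}$ has at most $\trank_b(\psi)$ nonzero rows. Once that is known, the Shannon entropy of the normalized diagram $\overline{\lambda^{(b)}}$, a probability vector with at most $\trank_b(\psi)$ nonzero entries, satisfies $H(\overline{\lambda^{(b)}})\le \log \trank_b(\psi)$. Taking the $\theta$-weighted sum and then the suprema over tuples and over $n$ gives the bound $E^\theta(\psi)\le M^\theta(\psi)$.

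For the key step, fix $b=\{S,\overline S\}$ and set $r=\trank_b(\psi)=\rank(\psi_S)$. First, since the projectors commute (\cref{lem:nc projectors commute}), a nonzero product forces each factor to be nonzero on $\psi^{\kron n}$. Indeed, we may reorder so that $P^{V_b}_{\lambda^{(b)}}$ acts first on $\psi^{\kron n}$; if it annihilated $\psi^{\kron n}$, the whole product would vanish. Hence $P^{V_S}_{\lambda^{(b)}}\psi^{\kron n}\neq 0$, because $\psi^{\kron n}$ is already symmetric under the diagonal $S_n$-action, so $P^{V_{[k]}}_{(n)}$ fixes it.

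Next, let $W=\operatorname{im}\psi_S\subseteq V_S$, a subspace of dimension $r$. Regrouped as an element of $V_S\ot V_{\overline S}$, the tensor $\psi$ lies in $W\ot V_{\overline S}$. Consequently $\psi^{\kron n}$ lies in $W^{\ot n}\ot V_{\overline S}^{\ot n}$. The isotypic projector $P_\lambda^{V_S}$ is given by the central character element $\frac{\dim[\lambda]}{n!}\sum_{\pi}\chi_\lambda(\pi)\pi$ acting on the $V_S^{\ot n}$ factors, so it preserves $W^{\ot n}$ and there coincides with the $\lambda$-isotypic projector of $W^{\ot n}$. By Schur--Weyl duality for $W$, this projector is zero unless $\mathbb S_\lambda(W)\neq0$, that is, unless $\lambda$ has at most $\dim W=r$ rows. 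Therefore $\lambda^{(b)}$ has at most $r$ nonzero parts.

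The main point needing care is this last compatibility: that the $S_n$-isotypic decomposition of $V_S^{\ot n}$, restricted to the invariant subspace $W^{\ot n}$, is the isotypic decomposition of $W^{\ot n}$. Expressing the projector through the group-algebra element above makes this immediate. Everything else reduces to the elementary bound that entropy is at most the logarithm of the support size.
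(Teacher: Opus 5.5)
Your proof is correct and follows the same route as the paper's. Commutativity of the laminar projectors forces each individual $P^{V_b}_{\lambda^{(b)}}\psi^{\kron n}$ to be nonzero, each $\lambda^{(b)}$ then has at most $\trank_b(\psi)$ rows, and entropy is at most the logarithm of the support size. The paper only asserts the row bound, whereas you justify it explicitly via $\psi\in W\ot V_{\overline S}$ with $W=\operatorname{im}\psi_S$ and Schur--Weyl duality for $W$.
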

\vspace{-1em}
\footnotetext{
    In~\cref{sec:crossing}, we provide an example of a distribution that has a non-laminar support $\theta \not\in \distsnc$ on $(AB|CD)$ and $(BC|AD)$ for which the value of $E^{\theta}(\psi)$ and the validity of the upper bound $E^{\theta}(\psi) \leq M^{\theta}(\psi)$ depends on the ordering of the projection operators, $P^{AB|CD}_{\mu}$ and $P^{BC|AD}_{\nu}$, used for defining $E^{\theta}(\psi)$. Hence, the assumption that $\theta \in \distsnc$ is laminar here is essential.
}
\begin{proof}
    As $\theta$ is a distribution with laminar support, we know that the corresponding set of projection operators $\{P^{V_b}_{\lambda_b} | b \in \supp(\theta) \}$ is completely mutually commuting. As a consequence, $\prod_{b \in \supp \theta} P_{\lambda^{(b)}}^{V_b} \psi^{\ot n} \neq 0$ implies that $P^{V_b}_{\lambda_b} \psi^{\otimes n} \neq 0$ for each $b \in \supp \theta$.
    The claim then holds because the $b$-th flattening rank $\trank_{b}(\psi)$ is an upper bound on the number of rows of a Young diagram $\lambda_b \vdash k$ such that $P^{V_b}_{\lambda_b} \psi^{\otimes k} \neq 0$ and moreover the maximum entropy of all such Young diagrams is $\log \trank_{b}(\psi)$ (i.e., the entropy of a rectangular Young diagram with $\trank_{b}(\psi)$ rows).
\end{proof}
The same upper bound holds for the lower quantum functional, since~$E_\theta \leq E^\theta$.\footnote{For general~$\theta \in \dists$, the bound~$E_\theta \leq M^\theta$ can also be established by observing~$H_b(\varphi) \leq \log R_b(\varphi) \leq \log R_b(\psi)$ for any~$\varphi$ in the~$\SL_{d_1} \times \dotsb \times \SL_{d_k}$-orbit closure of~$\psi$.}
Moreover, one trivially has the bound~$H_\theta(\psi) \leq E_\theta(\psi)$.
Consequently, the following implication holds:
\begin{equation}
    \label{eq:entropy_rank_implication}
    \text{if} \quad H_{\theta}(\psi) = M^{\theta}(\psi), \quad \text{then} \quad E_{\theta}(\psi) = M^{\theta}(\psi).
\end{equation}
Of course, the converse statement does not hold in general, as there are examples of tensors for which $H_{\theta}(\psi) < E_{\theta}(\psi) = M^{\theta}(\psi)$.
Nevertheless, for tensors with maximal singleton entropies and maximal non-singleton flattening ranks, we will establish the converse to \cref{eq:entropy_rank_implication}.
The primary tool is a classical result of Kempf and Ness:\footnote{The form of the theorem as we state it, is slightly different from the original works.
The original statements involve only the orbit, rather than the orbit closure.
However, every orbit closure only contains a single closed orbit, as by a theorem of Mumford~\cite[Cor.~1.2]{mumfordGeometricInvariantTheory1994}, any two closed orbits can be separated by a polynomial invariant, which is necessarily constant on orbit closures.
Moreover, vanishing of the moment map~$\mu$ on~$x_0$ implies that the orbit of~$x_0$ is closed.}
\begin{theorem}[{\cite{kempf1979stability,nessStratificationNullCone1984}}]
  \label{thm:kempf-ness-general}
  Consider a rational action of a connected complex reductive group~$G$ on a complex vector space~$V$.
  Let~$K$ be a maximal compact subgroup of~$G$ and assume~$V$ is endowed with a~$K$-invariant inner product.
  Let~$\mu\colon \Proj(V) \to \Lie(K)^*$ be the moment map for the action.
  Then for semistable~$x \in \Proj(V)$, there exists~$x_0 \in \overline{G \cdot x}$ such that
  \[
    \mu^{-1}(0) \cap \overline{G \cdot x} = K \cdot x_0.
  \]
  Moreover, if~$x = [v]$, then there exists~$v_0 \in \overline{G \cdot x} \subseteq V$ such that~$x_0 = [v_0]$, and~$\norm{g \cdot v} \geq \norm{v_0}$ for all~$g \in G$.
\end{theorem}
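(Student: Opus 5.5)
We would prove \cref{thm:kempf-ness-general} by working on the affine cone and minimizing the norm over the orbit closure. Fix $x=[v]$ semistable, which means $0\notin\overline{G\cdot v}\subseteq V$. The norm $\norm{\cdot}$ is proper on $V$ and $\overline{G\cdot v}$ is closed. Hence $\norm{\cdot}$ attains a minimum on $\overline{G\cdot v}$ at some $v_0$, and $v_0\neq 0$ by semistability. This immediately gives the last assertion: $\norm{g\cdot v}\geq\norm{v_0}$ for all $g\in G$. We then set $x_0=[v_0]$. Since $\overline{G\cdot v}$ is $G$-stable, $t\mapsto\norm{\exp(tX)\cdot v_0}^2$ has a minimum at $t=0$ for every $X\in i\Lie(K)$. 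Its derivative at $0$ is, up to normalization by $\norm{v_0}^2$, the pairing $\langle\mu(x_0),X\rangle$. Therefore $\mu(x_0)=0$, and by $K$-equivariance of $\mu$ we get $K\cdot x_0\subseteq\mu^{-1}(0)\cap\overline{G\cdot x}$. (In the projective statement the orbit closure should be read as that of the cone; we would track scalars throughout, as the paper's later uses for $\SL$-actions only need this form.)

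For the reverse inclusion we would take $[w]\in\overline{G\cdot x}$ with $\mu([w])=0$, choosing $w\in\overline{G\cdot v}$. The key tool is convexity. For Hermitian $X\in i\Lie(K)$, we diagonalize $X$ with real eigenvalues $\alpha_j$ and write $w=\sum_j w_j$ in the corresponding eigenbasis. Then $f_w(t)=\log\norm{\exp(tX)w}^2=\log\sum_j e^{2t\alpha_j}\abs{w_j}^2$ is convex. Moreover, it is strictly convex unless $X$ acts on the support of $w$ by a single scalar. Vanishing of $\mu$ means $f_w'(0)=0$ for all such $X$. Combined with the Cartan decomposition $G=K\exp(i\Lie(K))$, this shows that $w$ minimizes the norm on $G\cdot w$. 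It also shows that $G\cdot w$ is closed. Indeed, if the orbit were not closed, the Hilbert--Mumford criterion would give a one-parameter subgroup $\exp(tX)$, $X$ rational Hermitian, with $\exp(tX)w$ converging to a point of smaller orbit. That forces all weights of $w$ to be $\geq 0$ with some $>0$, hence $f_w'(0)>0$, a contradiction.

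Next we use Mumford's result that the orbit closure $\overline{G\cdot v}$ contains a unique closed orbit, since distinct closed orbits are separated by invariants. The orbit $G\cdot v_0$ is closed by the same argument applied to $v_0$, so $G\cdot w=G\cdot v_0$. Write $w=k\exp(iY)\cdot v_0$ with $k\in K$. Both $v_0$ and $\exp(iY)v_0$ are norm-minimizers along the convex curve $t\mapsto\log\norm{\exp(itY)v_0}^2$, so this curve is constant on $[0,1]$. Strict convexity then forces $\exp(iY)$ to act on $v_0$ by a scalar of modulus one. Hence $[w]\in K\cdot x_0$.

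We expect this uniqueness step to be the main obstacle. It requires careful handling of the degenerate (non-strictly convex) directions, of the passage between projective points and affine representatives (scalars), and of the appeal to Hilbert--Mumford and Mumford's separation theorem. By contrast, existence and the identification of the derivative with the moment map are routine.
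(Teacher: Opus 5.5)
Your argument is sound. Its passage from orbits to orbit closures is exactly the reduction in the paper's footnote preceding the statement: uniqueness of the closed orbit in $\overline{G\cdot v}$ via Mumford's separation by invariants, together with the fact that $\mu=0$ forces a closed orbit.

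The difference is in what is taken as given. The paper takes the orbit-level content from Kempf--Ness and Ness as a black box: minimal vectors exist in closed orbits, $\mu=0$ characterizes minimal vectors, and the minimal vectors of one orbit form a single $K$-orbit. You rederive this from log-convexity of $t\mapsto\norm{\exp(tX)w}^2$ and the Cartan decomposition. Minimizing the norm directly on $\overline{G\cdot v}$ is a nice shortcut, since it gives $v_0$, the bound $\norm{g\cdot v}\geq\norm{v_0}$ and $\mu(x_0)=0$ at once. Being self-contained costs you two details that the citation hides.

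\textbf{Unitary one-parameter subgroup.} The Hilbert--Mumford criterion gives a one-parameter subgroup $\lambda$ of $G$, not necessarily one whose generator lies in $i\Lie(K)$ for your fixed inner product. Your weight computation $f_w'(0)\propto\sum_j\alpha_j\norm{w_j}^2>0$ needs the latter. The standard refinement supplies it:
\begin{enumerate}
\item Write $\lambda=g\lambda_0g^{-1}$ with $\lambda_0(S^1)\subseteq K$.
\item Decompose $g=kp$ with $p$ in the parabolic $P(\lambda_0)$, using $G=K\,P(\lambda_0)$.
\item Check that $k\lambda_0k^{-1}$ still drives $w$ to a point outside $G\cdot w$. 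This holds because conjugating by the parabolic factor only moves the limit by an element of $G$.
\end{enumerate}

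\textbf{Projective versus affine closure.} ``Choosing $w\in\overline{G\cdot v}$'' is not automatic for a point of the projective orbit closure, and the paper genuinely needs that case. In \cref{prop:HEM_collapse}, $\varphi\in\overline{\GL\cdot\psi}$ is only a limit of \emph{rescaled} $\SL$-translates of $\psi$, so it is not enough to work with the affine closure. The fix uses tools you already invoke:
\begin{enumerate}
\item Suppose $c_ng_nv\to w$ with $\mu([w])=0$. Then $G\cdot w$ is closed and misses $0$, so some homogeneous invariant $f$ of degree $d\geq1$ has $f(w)\neq0$.
\item From $c_n^d f(v)=f(c_ng_nv)\to f(w)\neq0$ you get $f(v)\neq0$ and $\abs{c_n}\to\abs{f(w)/f(v)}^{1/d}>0$.
\item Along a subsequence $c_n\to c\neq0$, so $w/c\in\overline{G\cdot v}$, and your argument applies.
\end{enumerate}
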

More succinctly, in $\overline{G \cdot v}$ there is at most one~$K$-orbit on which~$\mu$ evaluates to~$0$, and these form exactly the points of minimal norm inside the orbit closure.

We describe now how to interpret this result in our concrete setting. The group of interest is~$G = \SL_{d_1} \times \dotsb \times \SL_{d_k}$, acting on~$\Proj(V)$ with~$V = \C^{d_1} \otimes \dotsb \otimes \C^{d_k}$.
The dual of the Lie algebra~$\Lie(K)^*$ can be identified with tuples~$(H_1, \dotsc, H_k)$, with each~$H_j$ a $d_j \times d_j$ trace-zero Hermitian matrix.
The moment map for the action takes the following explicit form:
\begin{proposition}
  \label{prop:moment map}
  The moment map~$\mu\colon \Proj(V) \to \Lie(K)^*$ is given by
  \[
    \mu([\psi]) = \frac{1}{\norm{\psi}_2^2}(\psi_1 \psi_1^*, \dotsc, \psi_k \psi_k^*) - (\frac{I_{d_1}}{d_1}, \dotsc, \frac{I_{d_k}}{d_k}).
  \]
\end{proposition}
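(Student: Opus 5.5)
The plan is to compute the moment map directly from its defining formula, and then to identify $\Lie(K)^*$ with tuples of trace-zero Hermitian matrices via the trace pairing. Here $K = \SU_{d_1} \times \dotsb \times \SU_{d_k}$ is the maximal compact subgroup of $G = \SL_{d_1} \times \dotsb \times \SL_{d_k}$, and the inner product on $V$ is the standard one, which is $K$-invariant. I will use the standard convention for the moment map of a unitary representation on $\Proj(V)$ with the Fubini--Study form, up to a harmless normalization constant $i$ or $2$:
\[
  \mu([\psi])(X) = \frac{\langle \psi, X \cdot \psi\rangle}{i\norm{\psi}_2^2}, \qquad X \in \Lie(K).
\]

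First I compute the infinitesimal action. An element of $\Lie(K)$ is a tuple $X=(X_1,\dotsc,X_k)$ with each $X_j$ traceless and anti-Hermitian. It acts by $X\cdot\psi = \sum_j (I\ot\dotsb\ot X_j\ot\dotsb\ot I)\psi$. Next I evaluate each term. Flattening $\psi$ along the $j$-th leg gives
\[
  \langle \psi, (I\ot\dotsb\ot X_j\ot\dotsb\ot I)\psi\rangle = \operatorname{tr}(\psi_j^* X_j \psi_j) = \operatorname{tr}(X_j\,\psi_j\psi_j^*).
\]
Writing $X_j = iH_j$ with $H_j$ traceless Hermitian, this gives
\[
  \mu([\psi])(X) = \sum_j \operatorname{tr}\Bigl(H_j \,\frac{\psi_j\psi_j^*}{\norm{\psi}_2^2}\Bigr).
\]

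Finally I pass to the identification of $\Lie(K)^*$. Under the real trace pairing $(A,H)\mapsto \sum_j \operatorname{tr}(A_jH_j)$, a functional on traceless Hermitian tuples is represented by the orthogonal projection of $(\psi_j\psi_j^*/\norm{\psi}_2^2)_j$ onto the traceless part. Since each $\psi_j\psi_j^*/\norm{\psi}_2^2$ is Hermitian with trace $\norm{\psi}_2^2/\norm{\psi}_2^2 = 1$, its traceless projection is obtained by subtracting $\tfrac{1}{d_j}I_{d_j}$. This yields exactly the stated formula.

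The only delicate point is bookkeeping of conventions: the sign and factor of $i$ in the moment map, and the choice of pairing. These affect the formula only by a positive scalar, and they do not affect the zero locus $\mu^{-1}(0)$. The zero locus is what matters for applying \cref{thm:kempf-ness-general}: it consists of the tensors whose singleton marginals are all maximally mixed.
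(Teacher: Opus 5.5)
Your computation is correct. It is the standard derivation: infinitesimal action, the identity $\langle\psi,(I\otimes\dotsb\otimes X_j\otimes\dotsb\otimes I)\psi\rangle=\operatorname{tr}(X_j\psi_j\psi_j^*)$, and projection onto the traceless part via the trace pairing using $\operatorname{tr}(\psi_j\psi_j^*)=\norm{\psi}_2^2$. The paper states the proposition without proof and takes this routine calculation as known, so your argument simply fills it in.

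One small quibble: changing the sign convention (for the moment map or for $X=iH$ versus $X=-iH$) can flip the sign. So the ambiguity is a nonzero scalar rather than a positive one, which still leaves $\mu^{-1}(0)$ unchanged.
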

The Kempf--Ness theorem then implies the following:
\begin{corollary}
  \label{cor:kempf-ness-tensor}
  Let $V = \C^{d_1}\ot\cdots \ot \C^{d_k}$, $K = \SU_{d_1} \times \cdots \times \SU_{d_k}$ and $G = \SL_{d_1} \times \cdots \times \SL_{d_k}$.
  If~$\psi \in V \setminus \{0\}$ and $[\varphi] \in \overline{G \cdot [\psi]} \subseteq \Proj(V)$ are such~$\mu([\psi]) = \mu([\varphi]) = 0$, i.e., for all singletons $s$,
  \[
    \rho^\psi_{s} = \rho^{\varphi}_s = \frac{I}{d_s},
  \]
  then~$[\varphi] \in K \cdot [\psi]$.
  Moreover, $\norm{g \cdot \psi}_2 \geq \norm{\psi}_2$ for all~$g \in G$, i.e., the function~$f_\psi(g) = \norm{g \cdot \psi}_2$ is minimized at~$g = I$.
\end{corollary}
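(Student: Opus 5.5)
The corollary follows by specializing \cref{thm:kempf-ness-general} to $G = \SL_{d_1} \times \dotsb \times \SL_{d_k}$ and $K = \SU_{d_1} \times \dotsb \times \SU_{d_k}$, with the standard Hermitian inner product on $V$, which is $K$-invariant. The moment map is the one in \cref{prop:moment map}. The condition $\mu([\psi]) = 0$ is then literally the statement that every singleton marginal satisfies $\rho_s = I/d_s$, so both hypotheses translate directly into $\mu([\psi]) = \mu([\varphi]) = 0$.

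\emph{Semistability.} We need $[\psi]$ to be semistable, i.e.\ $0 \notin \overline{G \cdot \psi}$. I would argue this in either of two ways. Since $\mu([\psi]) = 0$, the remark in the footnote preceding \cref{thm:kempf-ness-general} shows that the orbit of $\psi$ is closed. Alternatively, the standard Kempf--Ness gradient computation shows that $\psi$ is a critical point of the norm on its orbit, and for a closed orbit not containing $0$ this gives semistability. The most direct route uses the invariant-theoretic consequence of Kempf--Ness: a vector with vanishing moment map lies outside the null cone.

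\emph{Uniqueness.} Apply \cref{thm:kempf-ness-general} with $x = [\psi]$. This gives $x_0$ with $\mu^{-1}(0) \cap \overline{G \cdot [\psi]} = K \cdot x_0$. Both $[\psi]$ and $[\varphi]$ lie in the left-hand side, since $[\psi]$ trivially lies in its own orbit closure. Hence $[\psi]$ and $[\varphi]$ are in the same $K$-orbit, so $[\varphi] \in K \cdot [\psi]$.

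\emph{Norm minimization.} The theorem gives $v_0$ with $[v_0] = x_0$ and $\norm{g \cdot \psi} \geq \norm{v_0}$ for all $g \in G$. We must identify $\norm{v_0}$ with $\norm{\psi}$. Here $v_0$ lies in the affine orbit closure $\overline{G \cdot \psi}$, and $[v_0] \in K \cdot [\psi]$, so $v_0 = c\, k \cdot \psi$ for some scalar $c$ and some $k \in K$. I expect this scalar to be the main obstacle, since projective information alone does not fix $\abs{c} = 1$.

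To handle it, I would use that the affine orbit $G \cdot \psi$ is closed (as $\mu([\psi]) = 0$), so $v_0 \in G \cdot \psi$. Then $c\,\psi \in G \cdot \psi$, and since $K$ preserves norms it remains to show that the minimal norm on $G \cdot \psi$ is attained at $\psi$ itself. For this I would use the convexity argument underlying Kempf--Ness directly. For Hermitian traceless $X = (X_1, \dotsc, X_k)$, the function $t \mapsto \norm{e^{tX} \cdot \psi}^2$ is convex. Its derivative at $t = 0$ is $2\langle \psi, X \cdot \psi\rangle = 2\norm{\psi}^2 \sum_j \operatorname{tr}(X_j \rho_j)$. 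This vanishes because $\rho_j = I/d_j$ and $\operatorname{tr} X_j = 0$. So $t = 0$ is a global minimum along every such geodesic. Combining this with the polar decomposition $G = K \exp(i\Lie K)$ and the $K$-invariance of the norm gives $\norm{g \cdot \psi} \geq \norm{\psi}$ for all $g$. This argument alone already proves the final claim, that $f_\psi$ is minimized at $g = I$, so I would present it directly and avoid the scalar issue entirely.
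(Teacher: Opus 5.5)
Your proof is correct. The uniqueness part matches the paper, which gives no written proof beyond ``the Kempf--Ness theorem then implies''. It specializes \cref{thm:kempf-ness-general} to $G$, $K$ and the moment map of \cref{prop:moment map}, and observes that $[\psi]$ and $[\varphi]$ both lie in $\mu^{-1}(0)\cap\overline{G\cdot[\psi]} = K\cdot x_0$.

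You differ on the norm statement. The paper implicitly reads it off the last clause of \cref{thm:kempf-ness-general}. You instead prove it directly, in two steps:
\begin{enumerate}
\item The function $t\mapsto\norm{e^{tX}\psi}^2$ is convex, and its derivative at $t=0$ vanishes because $\operatorname{tr}(X_j\rho_j)=\operatorname{tr}(X_j)/d_j=0$.
\item The polar decomposition $g_j=u_j e^{X_j}$ holds with $u_j\in\SU_{d_j}$ and $X_j$ traceless Hermitian; $\det u_j=1$ is forced by $\det g_j=1$.
\end{enumerate}
This route is self-contained for the easy direction of Kempf--Ness (vanishing moment map implies minimal norm). It also sidesteps the scalar ambiguity you correctly spotted. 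In the paper's route that ambiguity needs an extra step: write $v_0=c\,k\psi$. Taking $g=I$ gives $\abs{c}\le 1$. If $\abs{c}<1$, then $c\psi=k^{-1}v_0\in\overline{G\cdot\psi}$, so $c^n\psi\in\overline{G\cdot\psi}$ for all $n$, which puts $0$ in the orbit closure and contradicts semistability.

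One improvement is to present the convexity argument first. The conclusion $\inf_{g\in G}\norm{g\psi}=\norm{\psi}>0$ says exactly that $\capa(\psi)>0$, i.e.\ $[\psi]$ is semistable. That replaces the three cited alternatives in your semistability paragraph with a one-line consequence. Only the uniqueness statement then relies on the deep part of \cref{thm:kempf-ness-general}.
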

We now establish a partial converse to \cref{eq:entropy_rank_implication}:
\begin{proposition}
    \label{prop:HEM_collapse}
    Let $\theta\in \distsnc$ be a distribution with laminar support,
    and $\psi$ be a $k$-tensor such that 
    \begin{enumerate}
        \item the singleton entropies of $\psi$ are maximal, i.e.,
        \begin{equation*}
            \forall s \text{ singleton} : \quad H_{s}(\psi) = \log \trank_{s}(\psi),
        \end{equation*}
        \item there exists $S_1,\ldots,S_m \subseteq[k]$ such that $\bigcup_j S_j = [k]$, for all $j$, $\theta(S_j|\overline{S_j}) > 0$ and, 
        \begin{equation*}
             \trank_{S_j|\overline{S_j}}(\psi) = \prod_{s\in S_j}\trank_{s}(\psi).
        \end{equation*}
    \end{enumerate} 
    Then $E_{\theta}(\psi) = M^{\theta}(\psi)$ implies $H_{\theta}(\psi) = M^{\theta}(\psi)$.
\end{proposition}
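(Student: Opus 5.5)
We will argue by transferring the maximizer of the lower functional back to $\psi$ itself via the Kempf--Ness rigidity (\cref{cor:kempf-ness-tensor}). First, reduce to the concise case. Let $d_s = \trank_s(\psi)$ and restrict each local space to the support of $\rho^\psi_s$. Then $\psi$ lives in $\C^{d_1}\ot\dotsb\ot\C^{d_k}$, and the quantities $E_\theta(\psi)$, $M^\theta(\psi)$, $H_\theta(\psi)$ are unchanged. In these coordinates, hypothesis (i) says $\rho^\psi_s = I/d_s$ for every singleton $s$, i.e.\ $\mu([\psi])=0$ by \cref{prop:moment map}.

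Now assume $E_\theta(\psi)=M^\theta(\psi)$. By \cref{lem:lower qfunc proj orbit closure} there is $[\varphi]\in\overline{\SL\cdot[\psi]}$ with $H_\theta(\varphi)=M^\theta(\psi)$. Every flattening rank is lower semicontinuous and monotone under restriction, so $\trank_b(\varphi)\le\trank_b(\psi)$ for all $b$. Hence
\[
H_b(\varphi)\le\log\trank_b(\varphi)\le\log\trank_b(\psi)\qquad\text{for all } b.
\]
Since $H_\theta(\varphi)$ is a $\theta$-average of the left-hand sides and equals the $\theta$-average of the right-hand sides, equality holds for every $b\in\supp\theta$. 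In particular, for each $S_j$ the marginal $\rho^\varphi_{S_j}$ is maximally mixed on a subspace of dimension $\trank_{S_j}(\psi)=\prod_{s\in S_j}d_s$.

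The key step, which I expect to be the main obstacle, is to upgrade this to $\mu([\varphi])=0$. The support of $\rho^\varphi_{S_j}$ is contained in $\bigotimes_{s\in S_j}\operatorname{supp}\rho^\varphi_s$, whose dimension is $\prod_{s\in S_j}\trank_s(\varphi)\le\prod_{s\in S_j}d_s$. The dimension count therefore forces $\trank_s(\varphi)=d_s$ for every $s\in S_j$. It also forces the support of $\rho^\varphi_{S_j}$ to be all of $\bigotimes_{s\in S_j}\C^{d_s}$, so that $\rho^\varphi_{S_j}=I/\prod_{s\in S_j} d_s$. Taking partial traces then gives $\rho^\varphi_s=I/d_s$ for each $s\in S_j$. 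Because $\bigcup_j S_j=[k]$, this holds for every singleton $s$, and thus $\mu([\varphi])=0$. One should double-check here that $\varphi$ stays inside the concise space; this is automatic, since $\varphi$ is a limit of $(g_1\ot\dotsb\ot g_k)\psi$ with $g_i$ acting on $\C^{d_i}$.

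Finally, apply \cref{cor:kempf-ness-tensor} to $\psi$ and $[\varphi]$, both of which have vanishing moment map. This yields $[\varphi]\in K\cdot[\psi]$ with $K=\SU_{d_1}\times\dotsb\times\SU_{d_k}$. Marginal entropies are invariant under local unitaries and scaling, so
\[
H_\theta(\psi)=H_\theta(\varphi)=M^\theta(\psi),
\]
as claimed.
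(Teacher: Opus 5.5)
Your proof is correct and follows the same route as the paper's proof. You take a maximizer $[\varphi]$ in the orbit closure via \cref{lem:lower qfunc proj orbit closure}, force termwise equality $H_b(\varphi)=\log\trank_b(\psi)$ on $\supp\theta$, use hypothesis (ii) to make the singleton marginals of $\varphi$ uniform, and conclude with the Kempf--Ness rigidity of \cref{cor:kempf-ness-tensor}. You also spell out two details the paper leaves implicit: the reduction to the concise format, which is needed because \cref{cor:kempf-ness-tensor} is phrased with full-rank marginals $I/d_s$, and the support-containment dimension count behind the paper's step ``assumption (ii) now implies the $s$-marginal of $\varphi$ is uniform''.
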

\begin{proof}
     The assumption $E_{\theta}(\psi) = M^{\theta}(\psi)$ together with~\cref{lem:lower qfunc proj orbit closure} implies there exists~$\varphi \in \overline{\GL \cdot \psi}$ such that~$H_\theta(\varphi) = M^{\theta}(\psi)$.
     Therefore, for each $b \in \supp \theta$, we have
$\log \trank_b(\psi) = H_b(\varphi) \leq \log \trank_b(\varphi) \leq \log \trank_b(\psi)$.
     In particular, this holds for bipartitions of the form $S_j | \overline{S_j}$, and the $S_j$-marginal of $\varphi$ is uniform.
     Assumption (ii) now implies that for any $s \in S_j$, the $s$-marginal of $\varphi$ is also uniform, and hence $H_s(\varphi) = \log \trank_s(\varphi)$.
     By assumption (i), however, the eigenvalues of the $s$-marginals of $\psi$ are uniform, which, by~\cref{cor:kempf-ness-tensor} implies $\psi$ and $\varphi$ must actually lie in the same projective $\U_{d_1}\times\cdots \times \U_{d_k}$ orbit.
     This then implies $H_{\theta}(\varphi) = H_{\theta}(\psi)$.
     Since we previously established $H_{\theta}(\varphi) = M^{\theta}(\psi)$, we conclude $H_{\theta}(\psi) = M^{\theta}(\psi)$ also, which completes the claim.
\end{proof}
\begin{corollary}
    \label{cor:separations}
    Let $\psi$ be a $k$-tensor satisfying the assumptions of \cref{prop:HEM_collapse}, and $\theta\in \distsnc$ be a laminarly-supported distribution such that~$H_\theta(\psi) < M^{\theta}(\psi)$ and also $E^{\theta}(\psi) = M^{\theta}(\psi)$.
    Then
    \begin{equation*}
        E_{\theta}(\psi) < E^{\theta}(\psi).
    \end{equation*}
\end{corollary}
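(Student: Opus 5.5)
The plan is to argue by contradiction, using \cref{prop:HEM_collapse} together with the general bounds $E_\theta \leq E^\theta \leq M^\theta$ established above. The latter chain follows from \cref{prop:E_upper_M_bound} and the inequality $E_\theta \leq E^\theta$, or directly from the footnoted argument for $E_\theta \leq M^\theta$.

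Suppose, contrary to the claim, that $E_\theta(\psi) \geq E^\theta(\psi)$. By hypothesis $E^\theta(\psi) = M^\theta(\psi)$, so we get $E_\theta(\psi) \geq M^\theta(\psi)$. Combined with $E_\theta(\psi) \leq M^\theta(\psi)$, this forces $E_\theta(\psi) = M^\theta(\psi)$.

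Since $\psi$ satisfies assumptions (i) and (ii) of \cref{prop:HEM_collapse}, and $\theta$ is laminarly supported, that proposition applies. It yields $H_\theta(\psi) = M^\theta(\psi)$, which contradicts the hypothesis $H_\theta(\psi) < M^\theta(\psi)$. Hence $E_\theta(\psi) < M^\theta(\psi) = E^\theta(\psi)$.

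One point needs checking. Assumption (ii) of \cref{prop:HEM_collapse} refers to the same $\theta$: the sets $S_j$ must satisfy $\theta(S_j|\overline{S_j})>0$. I read the phrase ``satisfying the assumptions of \cref{prop:HEM_collapse}'' as being relative to this $\theta$. With that reading there is no real obstacle, and the proof is a short chain of inequalities.
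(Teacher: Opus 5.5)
Your proof is correct and is essentially the paper's argument. The paper applies the contrapositive of \cref{prop:HEM_collapse} to get $E_\theta(\psi) \neq M^\theta(\psi)$ from $H_\theta(\psi) \neq M^\theta(\psi)$, and then combines it with $E_\theta(\psi) \leq M^\theta(\psi) = E^\theta(\psi)$; your reading that assumption (ii) refers to the same $\theta$ is the intended one.
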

\begin{proof}
    By \cref{prop:HEM_collapse}, $H_\theta(\psi) \neq M^{\theta}(\psi)$ implies $E_{\theta}(\psi) < M^{\theta}(\psi)$.
    Since $M^{\theta}(\psi) = E^{\theta}(\psi)$ by assumption, the claim holds.
\end{proof}

To establish a separation between the lower and upper quantum functionals, it therefore suffices to find a $k$-tensor which satisfies the assumptions of \cref{prop:HEM_collapse} and \cref{cor:separations}.
For each $p \in [0,1]$, consider the $4$-tensor $\mathsf{S}_p \in (\mathbb C^{2})^{\ot 4}$ given by
\[
  \mathsf{S}_p =
  \sqrt{\frac{p}{2}} \parens*{e_1^{\ot 4} + e_2^{\ot 4}}
  + \sqrt{\frac{1-p}{2}} \parens*{e_1 \ot e_2 \ot e_1 \ot e_2 + e_2 \ot e_1 \ot e_2 \ot e_1}.
\]

\begin{proposition}
    \label{prop:psi p gives separation}
    Let~$\theta \in \distsnc^4$ be such that $\theta(AB|CD) > 0$.
    Then for all $p \in (0,1/2)\cup (1/2,1)$,
    \[
        E_\theta(\mathsf{S}_p) < E^{\theta}(\mathsf{S}_p).
    \]
\end{proposition}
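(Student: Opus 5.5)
The plan is to check that $\mathsf{S}_p$ and $\theta$ satisfy the hypotheses of \cref{prop:HEM_collapse} and \cref{cor:separations}. The only nontrivial point is $E^\theta(\mathsf{S}_p)=M^\theta(\mathsf{S}_p)$. I will obtain it by comparing $\mathsf{S}_p$ with $\mathsf{S}_{1/2}$, whose upper and lower functionals are easy to pin down.

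\emph{Support of $\theta$.} A bipartition of $[4]$ that is mutually laminar with $AB|CD$ must be a singleton or $AB|CD$ itself: $AC|BD$ and $AD|BC$ cross it. Hence $\supp\theta\subseteq\{A,B,C,D,AB\}$, and every bipartition in the support is either a singleton or $AB|CD$.

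\emph{Marginals of $\mathsf{S}_p$.} The four basis terms differ pairwise in at least two positions. Therefore every singleton marginal is diagonal with entries $\tfrac p2+\tfrac{1-p}2=\tfrac12$, so it equals $I/2$. This gives assumption (i) of \cref{prop:HEM_collapse}, with $\trank_s(\mathsf{S}_p)=2$.

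Viewed as a map $CD\to AB$, the flattening $(\mathsf{S}_p)_{AB}$ is diagonal in the product basis, with entries $\sqrt{p/2},\sqrt{p/2},\sqrt{(1-p)/2},\sqrt{(1-p)/2}$. Consequently:
\begin{itemize}
\item for $p\in(0,1)$, $\trank_{AB}(\mathsf{S}_p)=4=\trank_A\trank_B$;
\item taking $S_1=AB$ and $S_2=CD$ (both representing the same bipartition, which has positive weight) gives assumption (ii);
\item $H_{AB}(\mathsf{S}_p)=1+h(p)$, where $h$ is the binary entropy, and this is $<2$ for $p\neq\tfrac12$.
\end{itemize}
It follows that $H_\theta(\mathsf{S}_p)=1+\theta(AB)h(p)<1+\theta(AB)=M^\theta(\mathsf{S}_p)$, since $\theta(AB)>0$.

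\emph{Upper functional, the main step.} I first observe that $\Phi\coloneqq\mathsf{S}_{1/2}=\Phi_{AC}\ot\Phi_{BD}$, the product of the unnormalized maximally entangled pairs on $AC$ and $BD$. All its singleton and $AB$-marginals are maximally mixed, so $H_\theta(\Phi)=M^\theta(\Phi)$. By \cref{eq:entropy_rank_implication} and \cref{prop:E_upper_M_bound}, this gives $M^\theta(\Phi)=E_\theta(\Phi)\le E^\theta(\Phi)\le M^\theta(\Phi)$.

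Next, write $\mathsf{S}_p=(X\ot I_{CD})\Phi$ with $X$ the invertible positive diagonal operator on $V_A\ot V_B$ given by $X=\mathrm{diag}(\sqrt{2p},\sqrt{2(1-p)},\sqrt{2(1-p)},\sqrt{2p})$ in the basis $e_{11},e_{12},e_{21},e_{22}$. By \cref{lem:nc projectors commute}, all the relevant projectors commute. Using the transpose trick on the maximally entangled pairs, I move the $C$- and $D$-isotypic projectors to the $A$- and $B$-legs; transposition preserves isotypic projectors, since they lie in the span of permutations. This shows that
\[
\prod_b P^{V_b}_{\lambda^{(b)}}\,\mathsf{S}_p^{\kron n}\neq0
\]
holds exactly when the operator $P^{AB}_\nu X^{\ot n}Q$ on $(V_A\ot V_B)^{\ot n}$ is nonzero. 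Here $Q$ is the product of the (transposed) singleton projectors and $\nu=\lambda^{(AB)}$.

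Since $X^{\ot n}$ lies in the image of $\GL(V_A\ot V_B)$, it commutes with $P^{AB}_\nu$. Being invertible, $X^{\ot n}$ then gives $P^{AB}_\nu X^{\ot n}Q=X^{\ot n}P^{AB}_\nu Q$, which is nonzero iff $P^{AB}_\nu Q\neq0$; this is precisely the $p=\tfrac12$ condition. So $\mathsf{S}_p$ and $\Phi$ have the same admissible tuples of Young diagrams, and $E^\theta(\mathsf{S}_p)=E^\theta(\Phi)=M^\theta(\Phi)=M^\theta(\mathsf{S}_p)$, the last equality because all flattening ranks coincide.

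All hypotheses are now verified, and \cref{cor:separations} yields $E_\theta(\mathsf{S}_p)<E^\theta(\mathsf{S}_p)$. I expect the main obstacle to be the transpose/commutation bookkeeping in the previous paragraph. In particular, I need to check carefully that the symmetrizer $P_{(n)}$ inside $P^{V_b}_\lambda$ and the transposes of singleton projectors behave as claimed. If this causes trouble, a fallback is to bound $E^\theta(\mathsf{S}_p)$ from below through a degeneration $\mathsf{S}_p\degengeq\varphi$ with $E_\theta(\varphi)=M^\theta$, combined with monotonicity.
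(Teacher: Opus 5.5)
\textbf{Gap in the main step.} In
$\prod_b P^{V_b}_{\lambda^{(b)}}\mathsf{S}_p^{\kron n}=P^{AB}_\nu P^A_\alpha P^B_\beta P^C_\gamma P^D_\delta\,(X^{\ot n}\ot I)\Phi^{\kron n}$,
only the $C$- and $D$-projectors can be moved past $X^{\ot n}$ and transposed onto the $AB$-legs. The $A$- and $B$-projectors, which are present whenever $\theta_A$ or $\theta_B>0$, stay to the left of $X^{\ot n}$. So the operator you actually have to test is $P^A_\alpha P^B_\beta P^{AB}_\nu X^{\ot n}P^A_\gamma P^B_\delta$, not $P^{AB}_\nu X^{\ot n}Q$.

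Mutual commutation of the projectors (\cref{lem:nc projectors commute}) does not rescue this. $X^{\ot n}$ commutes with $P^{AB}_\nu$, but not with $P^A_\alpha$ or $P^B_\beta$. For $p\neq\frac12$, $X$ is not of the form $a\ot b$: otherwise $\mathsf{S}_p$ would lie in the local $\GL$-orbit of $\mathsf{S}_{1/2}$ and have the same $E_\theta$, contradicting the statement.

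In fact your conclusion that $\mathsf{S}_p$ and $\Phi$ have the same admissible tuples is false. For $\Phi=\Phi_{AC}\ot\Phi_{BD}$, every admissible tuple has $\alpha=\gamma$. Now take $g$ with $g^*g=\begin{pmatrix}1&x\\x&1\end{pmatrix}$, $0<x<1$. The tensor $(g\ot I\ot I\ot I)\mathsf{S}_p$ has $A$-marginal spectrum $\frac{1\pm x}{2}$ but $C$-marginal spectrum $\frac{1\pm 2\sqrt{p(1-p)}\,x}{2}$. Hence the entanglement polytope of $\mathsf{S}_p$ leaves the hyperplane $\alpha=\gamma$. For $\theta$ with weight on $A$ and $C$, there are tuples admissible for $\mathsf{S}_p$ but not for $\Phi$.

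\textbf{The missing idea.} You only need one direction, for one tuple, and that tuple is special. Take $n=4m$, $\nu=(m,m,m,m)$ and $\alpha=\beta=\gamma=\delta=(2m,2m)$.
\begin{itemize}
\item $\mathbb S_\nu(\C^4)=\det^m$ restricts to $\det^{2m}\boxtimes\det^{2m}$ on $\GL_2\times\GL_2$.
\item Hence the range of $P^{AB}_\nu$ lies in the range of $P^A_{(2m,2m)}P^B_{(2m,2m)}$, and all singleton projectors are absorbed.
\item $X^{\ot n}$ acts on that range by the scalar $(\det X)^m=(4p(1-p))^m\neq 0$.
\item So your operator equals $(4p(1-p))^m P^{AB}_\nu\neq0$.
\end{itemize}
This is exactly the paper's argument with $n=4$. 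There the absorption reads $P^{AB|CD}_{(1,1,1,1)}P^A_{(2,2)}P^B_{(2,2)}P^C_{(2,2)}P^D_{(2,2)}=P^{AB|CD}_{(1,1,1,1)}$, and $\|P^{AB|CD}_{(1,1,1,1)}\mathsf{S}_p^{\ot 4}\|^2=\abs{\det((\mathsf{S}_p)_{AB|CD})}^2=p^2(1-p)^2/16$.

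With this patch, the rest of your proof matches the paper and is fine: the support of $\theta$, the marginals, the hypotheses of \cref{prop:HEM_collapse}, and $H_\theta<M^\theta$. Your value $1+h(p)$ is the correct $AB$-entropy.

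\textbf{The fallback cannot work.} $E_\theta$ is monotone under degeneration. So a degeneration $\varphi$ of $\mathsf{S}_p$ with $E_\theta(\varphi)=M^\theta(\mathsf{S}_p)$ would force $E_\theta(\mathsf{S}_p)=M^\theta(\mathsf{S}_p)$, the opposite of what you are proving.
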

\begin{proof}
With respect to the standard inner product on $\mathbb C^{2}$ making $\{e_1, e_2\}$ an orthonormal basis, the eigenvalues of all singleton marginals of $\mathsf{S}_p$ are all uniform, e.g.,
\[
    \mathrm{eig}_{A|BCD}(\mathsf{S}_p) = \left(\frac{1}{2},\frac{1}{2}\right),
\]
and therefore the singleton entropies of $\mathsf{S}_p$ are maximal, e.g., $H_{A|BCD}(\mathsf{S}_p) = \log R_{A|BCD}(\mathsf{S}_p) = \log 2$.
At the same time, for the $AB|CD$ bipartition, we have non-uniform eigenvalues
\[
    \mathrm{eig}_{AB|CD}(\mathsf{S}_p) = \left(\frac{p}{2},\frac{p}{2},\frac{1-p}{2},\frac{1-p}{2}\right).
\]
For $p \in (0,1)$, we have $R_{AB|CD}(\mathsf{S}_p) = 4$ and thus $\mathsf{S}_p$ satisfies the conditions of \cref{prop:HEM_collapse}.

For $p \in (0,1)$, the singleton and $AB|CD$ flattening ranks of $\mathsf{S}_p$ are $2$ and $4$ respectively meaning
\begin{equation}
    M^{\theta}(\mathsf{S}_p) = \begin{cases}
        (1-\theta(AB|CD)) \log 2 + \theta(AB|CD) \log 4 & p \in(0,1), \\
        \log 2 & p = 0,1.
    \end{cases}
\end{equation}
To apply \cref{cor:separations}, we must establish that $E^{\theta}(\mathsf{S}_p) = M^{\theta}(\mathsf{S}_p)$.
To accomplish this, note that it suffices to prove $P_{\ydiagram{1,1,1,1}}^{AB|CD} P_{\ydiagram{2,2}}^{A} P_{\ydiagram{2,2}}^{B} P_{\ydiagram{2,2}}^{C} P_{\ydiagram{2,2}}^{D} \mathsf{S}_p^{\ot 4} \neq 0$.
For the tensor space $\mathbb C^{2} \ot \mathbb C^{2} \ot \mathbb C^{2} \ot \mathbb C^{2}$, note that $P_{\ydiagram{1,1,1,1}}^{AB|CD}$, given by
\[
    P_{\ydiagram{1,1,1,1}}^{AB|CD} = (P_{\ydiagram{1,1,1,1}}^{AB} \ot I_{CD}^{\ot 4}) P_{\ydiagram{4}}^{ABCD} = (I_{AB}^{\ot 4} \ot P_{\ydiagram{1,1,1,1}}^{CD}) P_{\ydiagram{4}}^{ABCD}
\]
has rank one because $\dim(\mathbb S(\mathbb C^{4})_{\ydiagram{1,1,1,1}}) = 1$ 
and $\dim([\vcenter{\hbox{\,\ydiagram{1,1,1,1}\,}}]) = 1$.
Moreover, the restriction from $\GL_4$ on $AB$ to the tensor product subgroup $\GL_2 \times \GL_2$ on $A$ and $B$ of $\mathbb S(\mathbb C^{4})_{\ydiagram{1,1,1,1}}$ is of the form
\[
    \mathrm{Res}^{\GL_4}_{\GL_2 \times \GL_2} \mathbb S_{\ydiagram{1,1,1,1}}(\C^{4}) \simeq \mathbb S_{\ydiagram{2,2}}(\C^{2}) \ot \mathbb S_{\ydiagram{2,2}}(\C^{2}).
\]
This restriction relation then implies
$P_{\ydiagram{1,1,1,1}}^{AB|CD} P_{\ydiagram{2,2}}^{A} P_{\ydiagram{2,2}}^{B} P_{\ydiagram{2,2}}^{C} P_{\ydiagram{2,2}}^{D} = P_{\ydiagram{1,1,1,1}}^{AB|CD}$.
Therefore,
\[
    \norm{P_{\ydiagram{1,1,1,1}}^{AB|CD} P_{\ydiagram{2,2}}^{A} P_{\ydiagram{2,2}}^{B} P_{\ydiagram{2,2}}^{C} P_{\ydiagram{2,2}}^{D} \mathsf{S}_p^{\ot 4}}^{2} = \norm{P_{\ydiagram{1,1,1,1}}^{AB|CD} \mathsf{S}_p^{\ot 4}}^{2} = \abs{{\det}((\mathsf{S}_p)_{AB|CD})}^2
\]
where $(\mathsf{S}_p)_{AB|CD}$ is the $AB|CD$ flattening of $\mathsf{S}_p$.
Now note that $\abs{{\det}((\mathsf{S}_p)_{AB|CD})}^2 = \frac{p^2(1-p)^2}{16}$. This does not vanish when~$p \in (0,1)$, establishing $E^{\theta}(\mathsf{S}_p) \geq M^{\theta}(\mathsf{S}_p)$. 
By \cref{prop:E_upper_M_bound} ($\theta$ is laminar), we have $E^{\theta}(\mathsf{S}_p) \leq M^{\theta}(\mathsf{S}_p)$ also and therefore also
\[
    E^{\theta}(\mathsf{S}_p) = M^{\theta}(\mathsf{S}_p).
\]
Regarding the lower quantum functional, the $AB|CD$ entropy of $\mathsf{S}_p$ is
\begin{equation}
    H_{AB|CD}(\mathsf{S}_p) = 2h(p) = -2 p\log p - 2 (1-p)\log p,
\end{equation}
and therefore
\begin{equation}
    H_{\theta}(\mathsf{S}_p) = (1-\theta(AB|CD) \log 2 + \theta(AB|CD) \cdot 2h(p),
\end{equation}
Therefore, for all $p \in (0,1/2) \cup (1/2, 1)$, we have the strict inequality $H_{\theta}(\mathsf{S}_p) < M^{\theta}(\mathsf{S}_p)$ and thus \cref{prop:HEM_collapse} implies the strict inequality $E_{\theta}(\mathsf{S}_p) < M^{\theta}(\mathsf{S}_p) = E^{\theta}(\mathsf{S}_p)$ also.
\end{proof}
By embedding $\mathsf{S}_p$ as a higher-order tensor, we establish separations for arbitrary~$k \geq 4$ and~$\theta \in \distsnc \setminus \distssing$.
\begin{proof}[Proof of~\cref{thm:upper lower separation general k}.]
    As $\theta$ is a non-singleton-supported distribution, there exists a bipartition $b=(S,\overline{S})$ in the support of $\theta$ such that $|S|\geq 2$, and $|\overline{S}|\geq 2$.
    Choose $A,B \in S$ and $C, D \in \overline{S}$ and define $\varphi$ to be a tensor whose local dimensions are $d_A =d_B =d_C =d_D = 2$ and $d_X = 1$ for all other $X \in [k]\setminus\{A,B,C,D\}$.
    
    Let $\varphi$ be the tensor of the form
    \begin{equation}
        \varphi_{ABCDX_1\cdots X_{k-4}} = \psi_{\frac{1}{3};ABCD} \ot v_1\ot\cdots \ot v_{k-4}
    \end{equation}
    where $v_i$ are any unit vectors in the one-dimensional spaces.
    Now as $\theta$ is laminar, we know that 
    \begin{equation}
        \theta(ABX|CD X') > 0, \quad
        \theta(ACX|BD X') = 0, \quad 
        \theta(ADX|BC X') = 0,
    \end{equation}
    where $X$ denotes any string of indices in $[k]\setminus\{A,B,C,D\}$ and $X'$ its complement in $[k]\setminus\{A,B,C,D\}$.
    We already established that $E_{\tilde\theta}(\psi_{1/3}) < E^{\tilde\theta}(\psi_{1/3})$ for all distributions $\tilde \theta$ on bipartitions of the seven bipartitions of $(A,B,C,D)$ satisfying
    \begin{equation}
        \tilde \theta(AB|CD) > 0, \quad
        \tilde \theta(AC|BD) = 0, \quad 
        \tilde \theta(AD|BC) = 0,
    \end{equation}
    To prove $E_{\theta}(\varphi)<E^{\theta}(\varphi)$, it thus suffices to prove
    \begin{equation}
        E_{\theta}(\varphi)=\kappa E_{\tilde\theta}(\psi_{\frac{1}{3}}), \qquad E^{\theta}(\varphi) =\kappa E^{\tilde\theta}(\psi_{\frac{1}{3}}),
    \end{equation}
    for some distribution $\tilde \theta$ and normalization constant $\kappa >0$.
    Indeed, $\tilde \theta$ and $\kappa$ are chosen such that
    \begin{align}
        \begin{split}
            \kappa\tilde \theta(A|BCD)&=\sum \theta(AX|BCD X'),\\
            \kappa\tilde \theta(B|ACD)&=\sum \theta(BX|ACD X'),\\
            \kappa\tilde \theta(C|ABD)&=\sum \theta(CX|ABD X'),\\
            \kappa\tilde \theta(D|ABC)&=\sum \theta(DX|ABC X'),\\
            \kappa\tilde \theta(AB|CD)&=\sum \theta(ABX|CD X'),
        \end{split}
    \end{align}
    where each sum is over all subsets $X \subset [k]\setminus\{A,B,C,D\}$ and $X'$ the corresponding complement.
    Note that $\kappa$ and $\tilde \theta$ are then uniquely fixed by requiring $\tilde \theta$ sum to one (and also noting that at least $\tilde \theta(AB|CD) > 0$ so $\kappa > 0$ is well-defined).
\end{proof}

\subsection{Quantitative separation}
\label{subsec:quantitative-separation}

In this section, we improve upon~\cref{thm:upper lower separation general k} by constructing a \textit{quantitative} upper bound for the lower quantum functional which enables us to determine, for some tensors, the magnitude of the separation $E_{\theta}(\psi) < E^{\theta}(\psi)$. 
For simplicity we focus on~$k=4$, and consider only the~$\theta$ given by~$\theta(AB|CD) = 1$ (all other weights are zero).
In this case, we write $E_{AB}(\psi) = E_\theta(\psi)$ for the logarithmic lower quantum functional and $E^{AB}(\psi) = E^\theta(\psi)$ for the logarithmic upper quantum functional evaluated on a $4$-tensor $\psi$.

We first recall a definition from~\cite{burgisserTheoryNoncommutativeOptimization2019}\footnote{This notion can be traced back to \cite{gurvitsClassicalComplexityQuantum2004}, where it was used in the context of Edmonds' problem.}, which helps us state a clean bound. Let~$V = \C^{d_1} \ot \dotsb \ot \C^{d_k}$, and let~$\SL = \SL_{d_1} \otimes \dotsb \otimes \SL_{d_k}$ act on~$V$ via the tensor-product action.
Then the~\emph{capacity} of~$\psi \in V \setminus \{0\}$ is defined by
\[
    \capa(\psi) \coloneq \inf_{g \in \SL} \norm{g \psi}.
\]
Recall that a tensor~$\psi$ is called semistable if~$\capa(\psi) > 0$ (equivalently, $0 \not\in \overline{\SL \cdot \psi}$).
By~\cref{thm:kempf-ness-general}, $\capa(\psi) = \norm{\psi}$ if and only if~$\mu(\psi) = 0$.

Let~$\psi \in \C^{d_A} \ot \C^{d_B} \ot \C^{d_C} \ot \C^{d_D} \setminus \{0\}$ with~$d \coloneq d_A d_B = d_C d_D$.
Then the flattening~$\psi_{AB|CD}$ can be viewed as a~$d \times d$ matrix.
It turns out that its determinant provides quantitative information constraining~$E_{AB}$.
\begin{lemma}
    \label{lem:det bound}
    Assume~$\capa(\psi) > 0$.
    Then for all~$\varphi \in \overline{\SL \cdot \psi}$,
    \[
        \frac{\abs{\det(\varphi_{AB|CD})}^2}{\norm{\varphi}^{2 d}} \leq \frac{\abs{\det(\psi_{AB|CD})}^2}{\capa(\psi)^{2d}}.
    \]
\end{lemma}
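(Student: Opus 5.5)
Since $\abs{\det(\varphi_{AB|CD})}^2/\norm{\varphi}^{2d}$ is continuous on the projective space, and $\overline{\SL\cdot\psi}$ does not contain $0$ when $\capa(\psi)>0$, we first prove the inequality for $\varphi = g\psi$ with $g = (g_A,g_B,g_C,g_D)\in\SL$ and then pass to limits.

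The key observation is that the determinant of the flattening is a relative invariant of $\SL$. Indeed,
\[
  (g\psi)_{AB|CD} = (g_A\ot g_B)\,\psi_{AB|CD}\,(g_C\ot g_D)^{T}.
\]
We have $\det(g_A\ot g_B) = \det(g_A)^{d_B}\det(g_B)^{d_A} = 1$, and similarly $\det(g_C \ot g_D) = 1$. Hence $\det((g\psi)_{AB|CD}) = \det(\psi_{AB|CD})$. Consequently
\[
  \frac{\abs{\det((g\psi)_{AB|CD})}^2}{\norm{g\psi}^{2d}} = \frac{\abs{\det(\psi_{AB|CD})}^2}{\norm{g\psi}^{2d}} \leq \frac{\abs{\det(\psi_{AB|CD})}^2}{\capa(\psi)^{2d}},
\]
because $\norm{g\psi}\geq \capa(\psi)$ by the definition of capacity as an infimum.

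For a general $\varphi\in\overline{\SL\cdot\psi}$, we write $\varphi=\lim_m g_m\psi$. The numerator converges, since the determinant is a polynomial, and so does the norm. Moreover $\norm{\varphi} = \lim \norm{g_m\psi} \geq \capa(\psi) > 0$, so the quotient is continuous at $\varphi$ and the inequality survives the limit.

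There is no real obstacle here. The only care needed is checking that $\det(g_A\ot g_B)=1$, which uses the Kronecker determinant formula, and noting that the positivity of the capacity keeps the denominators nonzero under the limit. If the closure is meant projectively, the left-hand side is scale-invariant (degree $2d$ in both numerator and denominator), so the same argument applies to representatives.
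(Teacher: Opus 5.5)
Your proof is correct and follows the paper's argument: $\SL$-invariance of $\det(\psi_{AB|CD})$, plus $\norm{\varphi}\geq\capa(\psi)$ on the orbit closure by continuity of the norm. You simply spell out what the paper leaves implicit, namely the Kronecker identity $\det(g_A\otimes g_B)=\det(g_A)^{d_B}\det(g_B)^{d_A}=1$ and the limiting step.
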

\begin{proof}
    The determinant is an~$\SL$-invariant, so~$\det(\varphi_{AB|CD}) = \det(\psi_{AB|CD})$.
    Moreover if~$\varphi \in \overline{\SL \cdot \psi}$ then~$\norm{\varphi} \geq \capa(\psi)$ (by definition of the capacity, and continuity of the norm).
\end{proof}

This lemma finds its use as follows: 
recall the logarithm of the lower quantum functional~$E_{AB}(\psi)$ is the maximum achievable entropy~$H_{AB}(\rho_{AB})$ over all $AB$-marginals, $\rho_{AB}$, of (normalizations of) tensors $\varphi \in \overline{\SL \cdot \psi}$.
The quantities in \cref{lem:det bound} correspond to the determinants of such marginals, i.e.,
\[
    \det(\rho_{AB}) = \frac{\abs{\det(\varphi_{AB|CD})}^2}{\norm{\varphi}^{2d}}.
\]
Therefore, we can derive an upper bound on the lower quantum functional $E_{AB}(\psi)$ by relaxing the orbit-membership constraint and simply compute the maximum possible entropy $H_{AB}(\eta)$ over all $4$-tensors $\eta$ which satisfy the determinant bound only:
\begin{corollary}
  \label{cor:Edetab bound}
For semistable~$\psi$, let ~$c_\psi = \frac{\abs{\det(\psi_{AB|CD})}^2}{\capa(\psi)^{2d}}$. Then $E_{AB}(\psi) \leq \tilde E^{\det}_{AB}(c_\psi)$ where
\begin{equation}
     \tilde E^{\det}_{AB}(c) \coloneq\max\{ H_{AB}(\eta) \,|\, \eta \in \C^{d_A} \ot \C^{d_B} \ot \C^{d_C} \ot \C^{d_D}, \, \norm{\eta}=1, \abs{\det(\eta_{AB|CD})}^2 \leq c \}.
\end{equation}
\end{corollary}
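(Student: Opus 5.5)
The plan is to combine \cref{lem:lower qfunc proj orbit closure} with \cref{lem:det bound}, and then drop the orbit-closure constraint in favour of the determinant constraint alone.

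First, by \cref{lem:lower qfunc proj orbit closure}, $E_{AB}(\psi)$ is the maximum of $H_{AB}(\varphi)$ over projective classes $[\varphi] \in \overline{\SL \cdot [\psi]}$. I will need to check carefully that any such class has a representative $\varphi$ lying in the affine closure $\overline{\SL \cdot \psi} \subseteq V$, since \cref{lem:det bound} is stated for affine points. Semistability of $\psi$ gives $\capa(\psi)>0$. Hence, for a sequence $g_m\psi$ whose classes converge to $[\varphi]$, the norms $\norm{g_m\psi}$ are bounded below by $\capa(\psi)$. If these norms also stay bounded, pass to a convergent subsequence; the limit is a nonzero affine point representing $[\varphi]$.

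If the norms blow up, I will avoid the affine representative altogether and work with the normalized vectors $\eta_m = g_m\psi/\norm{g_m\psi}$ directly. Since $\det$ is $\SL$-invariant, $\det(\eta_{m,AB|CD}) = \det(\psi_{AB|CD})/\norm{g_m\psi}^d$. This gives
\[
\abs{\det(\eta_{m,AB|CD})}^2 \leq \frac{\abs{\det(\psi_{AB|CD})}^2}{\capa(\psi)^{2d}} = c_\psi .
\]
The inequality is closed and $\eta\mapsto\det(\eta_{AB|CD})$ is continuous, so it passes to the limit $\eta = \varphi/\norm{\varphi}$. This argument in fact handles both cases at once, because it uses only the invariance of the determinant and the defining bound $\norm{g\psi}\geq \capa(\psi)$. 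It is essentially the proof of \cref{lem:det bound} applied on the unit sphere, and it makes the affine-versus-projective issue disappear.

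Next, $H_{AB}$ is scale invariant, so $H_{AB}(\varphi) = H_{AB}(\eta)$ with $\norm{\eta}=1$ and $\abs{\det(\eta_{AB|CD})}^2\leq c_\psi$. Thus $\eta$ is feasible for the optimization defining $\tilde E^{\det}_{AB}(c_\psi)$, so $H_{AB}(\varphi)\le \tilde E^{\det}_{AB}(c_\psi)$. Taking the maximum over $[\varphi]$ gives $E_{AB}(\psi)\le \tilde E^{\det}_{AB}(c_\psi)$.

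It remains to justify writing $\max$ rather than $\sup$ in the definition of $\tilde E^{\det}_{AB}$. The feasible set is a closed subset of the unit sphere, hence compact, and it is nonempty because it contains the normalized $\psi$ itself ($\norm{\psi}\geq\capa(\psi)$). The entropy $H_{AB}$ is continuous on unit vectors, so the maximum is attained. The only delicate point is the limiting argument in the previous paragraphs, and carrying it out on normalized vectors resolves it.
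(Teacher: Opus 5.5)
Your proposal is correct and follows the paper's own route: write $E_{AB}(\psi)$ as a maximum over the $\SL$-orbit closure, use the $\SL$-invariance of $\det(\psi_{AB|CD})$ together with $\norm{g\psi}\ge\capa(\psi)$ to show every normalized point is feasible, and then relax the orbit constraint to the determinant constraint alone. Your limiting argument on normalized vectors handles the projective-versus-affine closure issue carefully. You could also skip closures altogether: $E_{AB}(\psi)=\sup_{g\in\SL}H_{AB}(g\psi)$, and each $g\psi/\norm{g\psi}$ is already feasible.
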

\noindent
We determine the value of $\tilde E^{\det}_{AB}(c)$ exactly in~\cref{prop:Edetab-exact-value}.
We also exhibit a continuous family~$\mathsf{Q}_\gamma$ of tensors in~$(\C^2)^{\ot 4}$ such that~$E_{AB}(\mathsf{Q}_\gamma) = \tilde E^{\det}_{AB}(c_{\mathsf{Q}_\gamma})$ in~\cref{sec:werner-gamma}, showing that the bound can be tight.

Note the dimensions of a tensor $\psi \in \mathbb C^{d_{A}} \ot \mathbb C^{d_{B}} \ot \mathbb C^{d_{C}} \ot \mathbb C^{d_{D}}$ impose constraints on the maximum possible $AB|CD$ determinant.
When $d=d_Ad_B=d_Cd_D$, the value of $c_{\psi}$ is always bounded by $c_\psi \leq 1/d^d$ and consequently $\tilde E^{\det}_{AB}(c_\psi) \leq \log(d)$.
Importantly, whenever the first inequality is strict, so is the second:
\begin{lemma}
    \label{lem:strict-det-ineq-implies-strict-Edet-ineq}
    For~$\psi \in \C^{d_A} \ot \C^{d_B} \ot \C^{d_C} \ot \C^{d_D}$ and~$d \coloneqq d_A d_B = d_C d_D$, $\tilde E^{\det}_{AB}(c_\psi) < \log (d)$ whenever~$c_\psi < \frac{1}{d^d}$.
\end{lemma}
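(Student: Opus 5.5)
The plan is to reduce the claim to a statement about density matrices via compactness. Write $\rho = \rho_{AB}$ for the normalized $AB$-marginal of a unit tensor $\eta$. Then $\det(\rho_{AB}) = \abs{\det(\eta_{AB|CD})}^2$, since $\rho_{AB} = \eta_{AB|CD}\eta_{AB|CD}^*$ for $\norm{\eta}=1$. Moreover $H_{AB}(\eta) = H(\rho_{AB})$. So
\[
\tilde E^{\det}_{AB}(c) = \max\{H(\rho) \mid \rho \text{ a reduced state on } \C^{d}, \ \det\rho \le c\}.
\]
Every $d\times d$ density matrix arises as such a marginal, by purification, because $d_Cd_D = d$. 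Hence the maximum is effectively over all density matrices $\rho$ on $\C^d$ with $\det \rho \le c$.

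First, I will note that the maximum is attained. The feasible set $\{\eta : \norm{\eta}=1,\ \abs{\det(\eta_{AB|CD})}^2\le c\}$ is compact, being closed in the unit sphere. It is also nonempty, since it contains product tensors, which have determinant $0$. The map $\eta\mapsto H_{AB}(\eta)$ is continuous on the unit sphere. So there is a maximizer $\eta^\star$ with $\tilde E^{\det}_{AB}(c) = H(\rho^\star)$, where $\rho^\star$ is its $AB$-marginal.

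Next I use the equality case of the entropy bound. For any density matrix $\rho$ on $\C^d$ we have $H(\rho)\le \log d$, with equality if and only if $\rho = I/d$. In that case $\det\rho = d^{-d}$. Since $c = c_\psi < d^{-d}$, the maximizer satisfies $\det\rho^\star \le c < d^{-d}$, so $\rho^\star \ne I/d$. Therefore $H(\rho^\star) < \log d$, which gives $\tilde E^{\det}_{AB}(c_\psi) < \log d$.

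The only point needing care is that the maximum is achieved rather than only a supremum. A supremum of strict inequalities might approach $\log d$, so compactness is essential here. This holds because the constraint $\abs{\det}^2 \le c$ is a closed condition, the determinant being continuous. Positivity of $c_\psi$ (semistability) is not needed for this lemma. The uniqueness of the maximum-entropy state is the standard fact that the Shannon entropy of the eigenvalue vector is strictly concave and maximized only at the uniform distribution.
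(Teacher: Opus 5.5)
Your proof is correct and is essentially the paper's argument. The paper argues by contrapositive and you argue directly, but the key step is the same: an attaining $\eta$ with $H(\rho_{AB}) = \log d$ forces $\rho_{AB} = I/d$, hence $\det \rho_{AB} = d^{-d} > c_\psi$, contradicting feasibility. Your compactness argument for attainment of the maximum makes explicit a step the paper only implicitly assumes by writing $\max$ in the definition of $\tilde E^{\det}_{AB}$.
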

\begin{proof}
    Suppose~$\tilde E^{\det}_{AB}(c_\psi) = \log(d)$. Then there exists some $\eta \in \C^{d_A} \ot \C^{d_B} \ot \C^{d_C} \ot \C^{d_D}$ such that~$\norm{\eta}=1$ and~$\rho_{AB} = \eta_{AB} \eta_{AB}^*$ satisfies $H(\rho_{AB}) = \log(d)$ and $\det(\rho_{AB}) = \abs{\det(\eta_{AB|CD})}^2 \leq c_\psi$.
    Since~$\rho_{AB}$ is a~$d \times d$ matrix, $H(\rho_{AB}) = \log(d)$ implies its eigenvalues are all~$1/d$, hence their product is~$1/d^d$ and $c_\psi \geq 1/d^{d}$.
\end{proof}

\begin{lemma}
\label{lem:semistable det upper bound gives opens and dense}
    Assume~$d_A = d_B = d_C = d_D \geq 2$, $d = d_A d_B$ ($= d_C d_D$) and~$c > 0$.
    The set~$U_c \subseteq V = \C^{d_A} \ot \C^{d_B} \ot \C^{d_C} \ot \C^{d_D}$ consisting of~$\psi$ which are semistable, such that~$c_\psi < c$, is non-empty and contains a Euclidean-open subset.
    Moreover, when~$c = 1/d^d$ the set $U_c$ is Euclidean-dense.
\end{lemma}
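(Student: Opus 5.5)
The plan is to split the statement into the open/non-empty part and the density part, using two facts: $\capa$ is upper semicontinuous on $V$, as an infimum of the continuous functions $\psi\mapsto\norm{g\psi}$, and $\psi\mapsto\abs{\det(\psi_{AB|CD})}^2$ is a polynomial.

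\textbf{Set-up.} Introduce the auxiliary set
\[
  W=\{\psi \in V : \mu(\psi)=0,\ \det(\psi_{AB|CD})\neq 0\}.
\]
For $\psi\in W$, \cref{cor:kempf-ness-tensor} gives $\capa(\psi)=\norm{\psi}$. Normalizing $\norm\psi=1$, we get $c_\psi=\det(\rho_{AB})$, where $\rho_{AB}$ is the $AB$-marginal of $\psi$. By AM--GM, $\det(\rho_{AB})\le 1/d^d$, with equality iff $\rho_{AB}=I/d$. So it suffices to exhibit points with all singleton marginals uniform, the $AB$-flattening invertible, and $\rho_{AB}$ arbitrarily close to singular.

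\textbf{Step 1: a point of $U_c$.} Take $\psi$ to be a direct analogue of $\mathsf{S}_p$ in $(\C^{d_A})^{\ot 4}$. Writing $n=d_A$, let
\[
  \psi=\sum_{i,j}\alpha_{ij}\, e_i\ot e_j\ot e_i\ot e_j ,
\]
where the pair $(i,j)$ runs over $[n]^2$ and $\alpha_{ij}>0$ with $\sum_i \alpha_{ij}^2 = \sum_j \alpha_{ij}^2 = 1/n$, i.e.\ $n\alpha^2$ is doubly stochastic.
\begin{itemize}
  \item Every singleton marginal is diagonal with entries $1/n$, so $\mu(\psi)=0$.
  \item $\psi_{AB|CD}$ is diagonal in the product basis with entries $\alpha_{ij}$, so it is invertible and $\rho_{AB}=\mathrm{diag}(\alpha_{ij}^2)$.
\end{itemize}
Hence $c_\psi=\prod_{i,j}\alpha_{ij}^2$. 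Choosing $n\alpha^2=(1-t)P+tJ/n$, with $P$ a permutation matrix and $J$ the all-ones matrix, gives $c_\psi\to 0$ as $t\to0^+$. So $U_c\neq\emptyset$ for every $c>0$.

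\textbf{Step 2: openness.} This is the step I expect to be the main obstacle. Semistability itself is open: the semistable locus is the complement of the null cone, which is Zariski closed. The difficulty is that $\psi\mapsto c_\psi=\abs{\det}^2/\capa^{2d}$ has the wrong semicontinuity, since an upper semicontinuous $\capa$ sits in the denominator, so "$c_\psi<c$" is not obviously open.

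I would handle this with the Kempf--Ness picture. Near a point $\psi_0\in W$ the $\SL$-orbit is closed and $\mu(\psi_0)=0$. For nearby $\psi$, the minimal-norm point of $\overline{\SL\psi}$ can be reached by $g$ close to $I$, by the standard local slice / implicit function argument on the map $g\mapsto\mu(g\psi)$ restricted to positive-definite $g$. This makes $\capa$ continuous at $\psi_0$.

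Alternatively, I would simply exhibit an explicit open set:
\begin{enumerate}
  \item Perturb the Step~1 point $\psi_0$ inside a neighborhood $N$ on which $\abs{\det}^2<c\,\epsilon$ for small $\epsilon$.
  \item Use that $\capa$ is bounded below on a compact neighborhood of a stable point. Since $\capa(\psi)^2$ is the $d$-th root of the maximum of $\abs{f}^{2/\deg f}$ over normalized invariants $f$, it is bounded below by a continuous positive function near $\psi_0$, namely the one built from the invariant $\det(\psi_{AB|CD})$ itself.
\end{enumerate}
Concretely, this gives $\capa(\psi)^{2d}\ge C\,\abs{\det\psi_{AB|CD}}^2$ for some constant $C$. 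That bound alone does not suffice, though; combined with continuity of $\capa$ at $\psi_0$ it shows $c_\psi<c$ on a neighborhood.

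\textbf{Step 3: density for $c=1/d^d$.} The complement of $U_{1/d^d}$ within the semistable locus consists of $\psi$ with $c_\psi=1/d^d$. By the equality case above, together with \cref{cor:kempf-ness-tensor}, these are exactly the $\psi$ whose Kempf--Ness representative $\varphi$ has $\rho_{AB}(\varphi)=I/d$.

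I would show this set is contained in a proper real-algebraic (in fact $\SL$-saturated) subset. It is the saturation of the real-analytic set $\{\mu=0,\ \rho_{AB}=I/d\}$, which has positive codimension inside $\mu^{-1}(0)$: the $\mathsf{S}_p$-type points above lie in $\mu^{-1}(0)$ but not in it.

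Since $\psi\mapsto c_\psi$ factors through the quotient and is real-analytic on the stable locus, its level set $\{c_\psi=1/d^d\}$ at the maximum is nowhere dense. Because the null cone is also nowhere dense, $U_{1/d^d}$ is dense.
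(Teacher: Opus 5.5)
Your Step~1 is correct and more explicit than the paper's ``clearly'' (for $d_A=2$ your family is exactly $\mathsf{S}_p$). Step~2, which you rightly flag as the crux, is not closed. The implicit-function argument for $p\mapsto\mu(p\psi)$ on positive-definite $p$ needs the derivative at $(I,\psi_0)$ to be invertible. Up to a positive factor, that derivative is the symmetric form $(X,Y)\mapsto\mathrm{Re}\langle X\psi_0,Y\psi_0\rangle$ on Hermitian traceless tuples, so it is degenerate as soon as some nonzero such $X$ annihilates $\psi_0$. Your point $\psi_0=\sum_{i,j}\alpha_{ij}\,e_i\ot e_j\ot e_i\ot e_j$ is fixed by the torus $\{(a,b,a^{-1},b^{-1})\}$ with $a,b$ diagonal in $\SL_{d_A}$. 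Concretely, $X=(\mathrm{diag}(s),\mathrm{diag}(r),-\mathrm{diag}(s),-\mathrm{diag}(r))$ kills $\psi_0$. So $\psi_0$ is polystable but not stable, and the implicit function theorem does not apply there.

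Your alternative route explicitly falls back on the same continuity. The bound coming from the invariant $\det$ itself only reproduces $c_\psi\le 1/d^d$. The paper avoids the issue by combining generic stability of this format \cite{bryanExistenceLocallyMaximally2018a} with Kirwan's homeomorphism $\mu^{-1}(0)/\SU\cong(\SL\cdot\mu^{-1}(0))/\SL$, which makes $\psi\mapsto c_\psi$ continuous on the stable locus.

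Within your own setup the gap is easy to fill in either of two ways.
\begin{enumerate}
\item Prove lower semicontinuity of $\capa$ on the semistable locus by compactness. Minimal-norm points $\varphi_n\in\overline{\SL\cdot\psi_n}$ subconverge to some $\varphi$ with $\mu(\varphi)=0$ and the same invariants as $\psi_0$. Separation of closed orbits by invariants, together with Kempf--Ness uniqueness, then forces $\norm{\varphi}=\capa(\psi_0)$.
\item Dispense with continuity altogether. Work at the semistable point $\unitk{d_A}{4}$ (the $t=0$ end of your family), where $\det$ of the $AB|CD$ flattening vanishes. Take a homogeneous invariant $f$ of degree $D$ with $f(\unitk{d_A}{4})\neq0$. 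The inequality $\abs{f(\psi)}\le\norm{f}_\infty\capa(\psi)^D$, with $\norm{f}_\infty$ the maximum of $\abs{f}$ on the unit sphere, bounds $\capa$ below by some $m>0$ on a neighbourhood. Shrinking that neighbourhood until $\abs{\det(\psi_{AB|CD})}^2<c\,m^{2d}$ gives an open subset of $U_c$.
\end{enumerate}

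Step~3 is missing two inputs. First, real-analyticity of $c_\psi$ on the stable locus $V^s$ only gives density if $V^s$ is non-empty, hence Zariski-open, dense and connected. (That real-analyticity does hold, via the implicit function theorem, which \emph{is} valid at stable points.) Non-emptiness of $V^s$ is exactly the generic-stability fact the paper cites, and you never supply it. Second, you need a \emph{stable} point with $c_\psi<1/d^d$ to rule out $c\equiv 1/d^d$ on $V^s$. Your $\mathsf{S}_p$-type witnesses are not stable, so this has to come from the open set of Step~2 meeting the dense set $V^s$. With both ingredients, the level set $\{c_\psi=1/d^d\}$ is nowhere dense and density follows. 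Your claims about a proper real-algebraic saturated set and positive codimension inside $\mu^{-1}(0)$ are neither justified nor needed.
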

\begin{proof}
    Let~$V^s$ denote the set of \emph{stable} tensors in~$V$, that is, those with closed~$\SL$-orbit and discrete stabilizer (in particular, it cannot be the zero tensor).
    For~$d_A = d_B = d_C = d_D$, a generic tensor is stable~\cite{bryanExistenceLocallyMaximally2018a}, as the dimension of the geometric invariant theory quotient~$\Proj(V) // \SL$ is the expected dimension~$\dim \Proj(V) - \dim \SL$ (this relies on the specific format of the tensors).

    Let~$\mu$ denote the moment map from~\cref{prop:moment map}.
    Then by~\cite[Thm.~7.4, Rmk.~7.5]{kirwanCohomologyQuotientsSymplectic1984} the natural map~$f\colon \mu^{-1}(0)/\SU \to (\SL \cdot \mu^{-1}(0))/\SL$ is a homeomorphism.
    The stable tensors are naturally a subset of~$\SL \cdot \mu^{-1}(0)$, and so the map~$\psi \mapsto c_\psi$ is the composition of the inclusion~$V^s \to \SL \cdot \mu^{-1}(0)$, the quotient map~$\SL \cdot \mu^{-1}(0) \to (\SL \cdot \mu^{-1}(0)) / \SL$, $f^{-1}$, and then the map
    \[
        [[\psi]_{\Proj(V)}]_{\SU} \mapsto \frac{\abs{\det(\psi_{AB|CD})}^2}{\norm{\psi}^{2d}},
    \]
    where~$[\psi]_{\Proj(V)} \in \mu^{-1}(0)$ and~$[\cdot]_{\SU}$ denotes its equivalence class in~$\mu^{-1}(0)/\SU$.
    All these maps are continuous, hence so is~$\psi \mapsto c_\psi$ on~$V^s$.
    By continuity, the sublevel sets~$\{\psi : c_\psi < c\}$ of~$\psi \mapsto c_\psi$ are Euclidean-open; they are clearly also non-empty whenever~$c > 0$, and Euclidean-dense when~$c = 1/d^d$ is the maximum value of~$c_\psi$, as~$V^s$ is dense in~$V$.
\end{proof}
This lemma helps us establish that the separation between upper and lower quantum functionals holds generically, for balanced order-$4$ tensors:
\begin{corollary}
\label{cor:generic-separation}
    Let~$d_A = d_B = d_C = d_D \geq 2$.
    Then the set of all~$\psi \in V$ with~$F_\theta(\psi) < F^\theta(\psi)$ for all~$\theta \in \distsnc^4 \setminus \distssing^4$ contains a Euclidean-open and dense subset of~$V$.
\end{corollary}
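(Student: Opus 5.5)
The plan is to produce one open dense set $W \subseteq V$ that works for every $\theta$ simultaneously. On $W$ I would show $E^\theta(\psi) = M^\theta(\psi)$ and $E_\theta(\psi) < M^\theta(\psi)$. For $k=4$ the bipartitions are the four singletons and the three pair bipartitions $AB|CD$, $AC|BD$, $AD|BC$. Any two pair bipartitions cross, so every $\theta \in \distsnc^4 \setminus \distssing^4$ is supported on the singletons plus exactly one pair bipartition $b$, with $\theta(b) > 0$. By permuting the legs it therefore suffices to handle $b = AB|CD$, and at the end to intersect the three analogous open dense sets obtained from the three pairings. Write $q = d_A$, so $d = q^2$.

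\textbf{Upper functional.} Let $W_1$ be the Zariski-open dense set where $\det(\psi_{AB|CD}) \neq 0$ (which forces every singleton flattening rank to equal $q$) and the analogous determinants for the other two pairings are nonzero. I would mimic the proof of \cref{prop:psi p gives separation} with $n = d$.
\begin{enumerate}
\item The projector $P^{AB|CD}_{(1^d)}$ has rank one, since $\mathbb S_{(1^d)}(\C^d) = \det$ and $[(1^d)]$ is the sign representation.
\item Restricting $\det$ from $\GL_d$ to $\GL_q \times \GL_q$ gives $\det^{q} \ot \det^{q}$, i.e.\ $\mathbb S_{(q^q)}(\C^q) \ot \mathbb S_{(q^q)}(\C^q)$. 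Hence $P^{AB|CD}_{(1^d)} \prod_{s} P^{s}_{(q^q)} = P^{AB|CD}_{(1^d)}$. For the $C$ and $D$ factors I would use the second expression $(I_{AB}^{\ot d} \ot P^{CD}_{(1^d)})P^{ABCD}_{(d)}$ of the same projector.
\item Consequently $\norm{\cdots \psi^{\ot d}}^2 = \abs{\det \psi_{AB|CD}}^2 \neq 0$.
\end{enumerate}
The rectangular diagram $(q^q)$ has normalized entropy $\log q$ and $(1^d)$ has entropy $\log d$, so $E^\theta(\psi) \geq M^\theta(\psi)$. Together with \cref{prop:E_upper_M_bound} this gives $E^\theta(\psi) = M^\theta(\psi)$ on $W_1$.

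\textbf{Lower functional.} For any $\varphi$ in the orbit closure, $H_\theta(\varphi) \leq \sum_s \theta(s)\log q + \theta(AB|CD) H_{AB}(\varphi)$. Taking the maximum over $\overline{\GL\cdot\psi}$ via \cref{lem:lower qfunc proj orbit closure} gives
\[
E_\theta(\psi) \leq \Big(\sum_s \theta(s)\Big)\log q + \theta(AB|CD)\, E_{AB}(\psi).
\]
Let $W_2 = U_{1/d^d}$ be the set from \cref{lem:semistable det upper bound gives opens and dense}, restricted to stable tensors (and the same for the other two pairings). For $\psi \in W_2$, \cref{cor:Edetab bound} and \cref{lem:strict-det-ineq-implies-strict-Edet-ineq} give $E_{AB}(\psi) \leq \tilde E^{\det}_{AB}(c_\psi) < \log d$. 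Because $\theta(AB|CD) > 0$, this yields $E_\theta(\psi) < M^\theta(\psi) = E^\theta(\psi)$ on $W_1 \cap W_2$.

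\textbf{Main obstacle.} The delicate point is that $W_2$ must be open and dense, not merely dense. The proof of \cref{lem:semistable det upper bound gives opens and dense} shows that $\psi \mapsto c_\psi$ is continuous on the stable locus $V^s$, which is itself open and dense. Hence $\{\psi \in V^s : c_\psi < 1/d^d\}$ is open, and the lemma says it is dense. A finite intersection of open dense sets is open dense, and the three Zariski-open dense sets in $W_1$ are also Euclidean open dense. So $W = W_1 \cap W_2$ over all three pairings is the desired Euclidean open dense set.
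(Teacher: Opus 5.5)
Your proposal is correct and follows essentially the same route as the paper. The upper functional attains $M^\theta$ on the Zariski-open set where the rank-one projector $P^{AB|CD}_{(1^d)}$ (which absorbs the rectangular singleton projectors) does not annihilate $\psi^{\ot d}$, i.e.\ where $\det(\psi_{AB|CD}) \neq 0$; the lower functional is pushed strictly below $M^\theta$ on the open dense set $\{c_\psi < 1/d^d\}$ from \cref{lem:semistable det upper bound gives opens and dense}; and one intersects over the finitely many (in fact three) possible pair bipartitions, with your restriction of $W_2$ to the stable locus $V^s$ simply making explicit the openness argument the paper's lemma uses implicitly.
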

\begin{proof}
    First fix a~$\theta \in \distsnc^4 \setminus \distssing^4$.
    Assume without loss of generality that~$\theta(AB|CD) \geq 0$ and~$ \theta(AC|BD) = \theta(AD|BC) = 0$.
    Let~$d = d_A d_B$ ($= d_C d_D$) as before, and set~$d' = d_A$ so that~$(d')^2 = d$.
    
    The set of~$\psi$ with~$F^\theta(\psi) = (\theta_A + \theta_B + \theta_C + \theta_D) \cdot \frac12 \log(d) + \theta(AB|CD) \log(d)$ contains a Euclidean-open and Euclidean-dense subset.
    Indeed, for~$\lambda = (d', \dotsc, d') \in \Z^{d'}$ and~$\nu = (1, \dotsc, 1) \in \Z^{d}$, the product of projectors 
    \[
        P = P^{AB|CD}_\nu P^A_\lambda P^B_\lambda P^C_\lambda P^D_\lambda
    \]
    is non-zero (in fact it is equal to~$P^{AB|CD}_\nu$, by the argument given in the proof of~\cref{prop:psi p gives separation}). 
    The set of~$\psi$ such that~$P \psi^{\kron d} \neq 0$ is a Euclidean-open and Euclidean-dense subset (it is even Zariski-open), and $F^\theta(\psi)$ achieves the claimed value for such~$\psi$.
    
    On the other hand, by \cref{lem:semistable det upper bound gives opens and dense} for a Euclidean-open and dense set of~$\psi$, we also have~$c_\psi < 1/d^d$, and hence
    \[
        \log F_\theta(\psi) = E_\theta(\psi) \leq \sum_{b\in \supp \theta} \theta_b E_b(\psi) < (\theta_A + \theta_B + \theta_C + \theta_D) \cdot \frac12 \log(d) + \theta(AB|CD) \log(d).
    \]
    since~$\tilde E^{\det}_{AB}(c_\psi) < \log(d)$.

    As the intersection of two Euclidean-open and dense subsets is open and dense, we obtain that~$F_\theta(\psi) < F^\theta(\psi)$ on a Euclidean-open and dense set~$U_\theta$.

    Now note that the construction of~$U_\theta$ depends only on~$\supp \theta$. Selecting one~$\theta$ for every possible laminar but non-singleton support, we see that~$U = \cap_\theta U_\theta$ is Euclidean-open and dense, as it is a finite intersection of open and dense sets.
\end{proof}
In~\cite{christandlAsymptoticTensorRank2025} it was shown that universal spectral points~$F$ have Zariski-closed sublevel sets~$F^{-1}((-\infty, r])$. If~$r$ is not the maximal value of~$F$, these sublevel sets will automatically be lower-dimensional, so cannot contain a non-empty Euclidean-open subset. Combining this with the above quantitative bound, we see that the lower quantum functional is never a universal spectral point when~$\theta \in \dists^k \setminus \distssing^k$:
\begin{theorem}
    \label{prop:general-lower-func-not-spectral}
    For all~$k \geq 4$ and~$\theta \in \dists^k \setminus \distssing^k$, the lower quantum functional~$F_\theta$ is not a spectral point for~$k$-tensors, i.e., $F_\theta \not\in \asympspec(\T^k)$.
\end{theorem}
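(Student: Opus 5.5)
The plan is to argue by contradiction, combining the Zariski-closedness of sublevel sets of universal spectral points from~\cite{christandlAsymptoticTensorRank2025} with the determinant bound of \cref{cor:Edetab bound} and \cref{lem:semistable det upper bound gives opens and dense}. Suppose $F_\theta\in\asympspec(\T^k)$ for some $\theta\in\dists^k\setminus\distssing^k$.

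The first step is a reduction to $4$-tensors. Since $\theta$ is not singleton-supported, there is a bipartition $S|\overline S$ in its support with $\abs{S},\abs{\overline S}\geq 2$. Choose $A,B\in S$ and $C,D\in\overline S$, and consider $4$-tensors embedded as $k$-tensors on the legs $A,B,C,D$ (as in \cref{def:embedded tensor}), with all other legs one-dimensional. On such embedded tensors, every bipartition of $[k]$ restricts to a bipartition of $\{A,B,C,D\}$ (or a trivial one, contributing entropy $0$). Hence $E_\theta(\tilde\psi)=\kappa\,E_{\tilde\theta}(\psi)$ for an induced distribution $\tilde\theta$ on bipartitions of $\{A,B,C,D\}$ with $\tilde\theta(AB|CD)>0$; the orbit closures also correspond. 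Note that $\tilde\theta$ may now have positive weight on $AC|BD$ or $AD|BC$ as well, since $\theta$ need not be laminar. This causes no difficulty, because only upper bounds on $E$ are needed. Restricting a universal spectral point along the embedding homomorphism $\T^4\to\T^k$ (which is monotone) yields a spectral point of $\T^4$, so it suffices to derive a contradiction for $\tilde\theta$ on $4$-tensors in $V=(\C^{d'})^{\ot 4}$ with $d'\geq2$.

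The second step applies the sublevel-set argument. Set $r_{\max}=\max_{\psi\in V\setminus 0}F_{\tilde\theta}(\psi)$, and let $M$ be the value of $2^{M^{\tilde\theta}}$ for full flattening ranks. Since $E_{\tilde\theta}\leq M^{\tilde\theta}$, we have $r_{\max}\leq M$. I will show that there is a nonempty Euclidean-open set $U$ of tensors $\psi$ with $F_{\tilde\theta}(\psi)\leq r$ for some $r<r_{\max}$. By~\cite{christandlAsymptoticTensorRank2025}, $\{F_{\tilde\theta}\leq r\}$ is Zariski-closed. A Zariski-closed set containing a Euclidean-open set is all of $V$, so $F_{\tilde\theta}\leq r<r_{\max}$ everywhere, which is a contradiction.

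To produce $U$, use $E_{\tilde\theta}(\psi)\leq\sum_b\tilde\theta(b)E_b(\psi)$, where $E_b$ denotes the lower functional for the point mass at $b$; this holds because the supremum of a sum is at most the sum of suprema. Bound $E_b\leq\log\trank_b\leq\log d$ for $b\neq AB|CD$. For $b=AB|CD$, apply \cref{cor:Edetab bound} together with \cref{lem:strict-det-ineq-implies-strict-Edet-ineq}: on the set $U_c$ of \cref{lem:semistable det upper bound gives opens and dense} with $c<1/d^d$, we get $E_{AB}(\psi)\leq\tilde E^{\det}_{AB}(c)<\log d$, where $d=d'^2$. I expect $\tilde E^{\det}_{AB}$ to be monotone in $c$, since it is a maximum over a growing set, giving a uniform bound $r<M$ on the nonempty open set $U_c$.

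The remaining point, and the main obstacle, is to ensure $r<r_{\max}$ rather than merely $r<M$. For this I need a tensor attaining $F_{\tilde\theta}=M$ exactly: one whose marginals are uniform across every bipartition of $\{A,B,C,D\}$ simultaneously, with full flattening ranks. Such a tensor is an absolutely maximally entangled tensor, e.g.\ the $\mathsf L$ tensor from \cref{eq:AME-state} for $d'\geq3$. For $d'=2$ no AME $4$-qubit state exists, so I would instead choose $d'=3$ (where AME states exist) or use a larger $d'$. The freedom to choose local dimensions in the reduction makes this harmless. Then $r_{\max}=M>r$, completing the contradiction.
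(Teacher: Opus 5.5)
Your argument has the same structure as the paper's proof. The AME tensor $\mathsf{L}$ in local dimension $3$ attains the maximal value $2^{M^\theta}$. The capacity/determinant bound (\cref{cor:Edetab bound}, \cref{lem:semistable det upper bound gives opens and dense}) pushes $E_{AB}$, and hence $E_\theta\le\sum_b\theta_b E_b$, strictly below that maximum on a nonempty Euclidean-open set. Zariski-closedness of sublevel sets of universal spectral points then gives the contradiction.

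The one step that fails as written is your reduction to $\T^4$. The standard embedding $\iota\colon\psi\mapsto\psi\otimes e_1^{\otimes(k-4)}$ is monotone and multiplicative, but it is not a semiring homomorphism. The sum $\iota(\psi)\oplus\iota(\varphi)$ has two-dimensional extra legs carrying GHZ-type correlations. For example, $\iota(\unitk{1}{4})\oplus\iota(\unitk{1}{4})=\unitk{2}{k}$, whereas $\iota(\unitk{2}{4})=\unitk{2}{4}\otimes e_1^{\otimes(k-4)}$, and these differ; the same example shows $\iota$ does not send unit tensors to unit tensors. Moreover, $F_\theta\circ\iota=F_{\tilde\theta}^{\,\kappa}$ takes the value $r^{\kappa}$ on $\unitk{r}{4}$, so it is not normalized when $\kappa<1$. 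Hence you cannot conclude $F_{\tilde\theta}\in\asympspec(\T^4)$, and you cannot invoke \cite{christandlAsymptoticTensorRank2025} on $\T^4$.

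The repair is not to pass to $\T^4$ at all. Apply the sublevel-set result to $F_\theta\in\asympspec(\T^k)$ directly on the format $(\C^3)^{\otimes 4}\otimes(\C^1)^{\otimes(k-4)}$, which is what the paper does. Since $\iota$ is a linear isomorphism onto this format and $F_\theta(\iota(\psi))=F_{\tilde\theta}(\psi)^{\kappa}$ with $\kappa>0$, each set $\{F_{\tilde\theta}\le r\}$ equals $\iota^{-1}(\{F_\theta\le r^{\kappa}\})$. It is therefore Zariski-closed, and the rest of your argument goes through unchanged.

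A smaller slip: for singleton $b$ you need $E_b\le\log\trank_b\le\log d'$, not $\log d$. With $\log d$, the resulting uniform bound need not lie below $\log M$ when $\tilde\theta$ has singleton weight.
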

\begin{proof}
    Let~$b = \{S, \overline{S}\}$ be a non-singleton bipartition of~$[k]$.
    Select distinct~$A, B \in S$ and~$C, D \in \overline{S}$, and consider~$d_{A} = d_{B} = d_{C} = d_{D} = 3$ and~$d_i = 1$ for all other~$i$.
    Define
    \begin{equation}\label{eq:AME-state}
        \mathsf{L} = \frac13 \sum_{\ell, \ell' = 1}^3 (e_{\ell} \ot e_{\ell'} \ot e_{\ell + \ell'} \ot e_{\ell + 2 \ell'})_{ABCD} \ot (e_1 \ot \dotsb \ot e_1)_{[k] \setminus \{A,B,C,D\}}.
    \end{equation}
    Then~$\mathsf{L}$ is \emph{absolutely maximally entangled} (AME) on $ABCD$~\cite[Sec.~III.B]{goyenecheAbsolutelyMaximallyEntangled2015}, i.e., the eigenvalues of its marginal along any bipartition are all uniform.
    This guarantees that the maximum value of~$E_\theta = \log F_\theta$ on~$V = \C^{d_1} \ot \dotsb \ot \C^{d_k}$ is equal to~$E_\theta(\mathsf{L}) = M^\theta(\mathsf{L}) = \sum_{b' \in \supp \theta} \log R_{b'}(\mathsf{L}) = \sum_{b' \in \supp \theta} \theta_{b'} E_{b'}(\mathsf{L})$.

    The set~$U \subseteq V$ of semistable~$\psi \in V$ with~$\abs{\det(\psi_{AB|CD})}^2/\capa(\psi)^{18} < 0.9 \cdot 1/3^3$ is non-empty, and contains a Euclidean-open subset of~$V$.
    These all satisfy~$E_{AB}(\psi) \leq c' \coloneq \tilde E^{\det}_{AB}(0.9 \cdot 1/3^3) < \log(3)$.
    Every~$\psi \in U$ then satisfies~$E_\theta(\psi) \leq \sum_{b \in \supp \theta} \theta_{b'} E_{b'}(\psi) \leq \sum_{b' \in \supp \theta} \theta_{b'} E_{b'}(\mathsf{L}) + \theta_b (\log(3) - c')$.
    The Zariski-closure of~$U$ is all of~$V$, since it contains a Euclidean-open subset of~$V$.
    If~$F_\theta = 2^{E_\theta}$ were a spectral point, it would be Zariski-lower-semicontinuous~\cite{christandlAsymptoticTensorRank2025};
    that is, it would satisfy that for every $r \in \R_{\geq 0}$, the sublevel set $F_\theta^{-1}((-\infty, r])$ is Zariski-closed.
    But~$F_\theta$ is at most~$2^{M^\theta(\mathsf{L}) - \theta_b}$ on~$U$, yet attains~$2^{M^\theta(\mathsf{L})}$ on~$\mathsf{L} \in V$.
    This is a contradiction.
\end{proof}

\begin{remark}
\label{remark:lower-not-spectral-via}
    To prove that~$F_\theta$ is not a spectral point for~$\theta \in \distsnc \backslash \distssing$, one may alternatively use the exact calculation of~$\tilde E^{\det}_{AB}(c)$ and the continuous family of tensors~$\mathsf{Q}_\gamma$ discussed in~\cref{sec:werner-gamma}. This family attains a continuum of values for the~$F_\theta$, which is also impossible for Zariski-lower-semicontinuous functionals, see~\cite{christandlAsymptoticTensorRank2025}.
    This approach does not obviously generalize to~$\theta \in \dists \setminus \distsnc$ as we do not know how to evaluate the lower quantum functional exactly in this case.
\end{remark}

\section{Agreement of upper and lower laminar functionals}\label{sec:new-functionals}

\subsection{Laminar-quantum-robust tensors}

In \cref{sec:separation}, we proved upper and lower quantum functionals of laminarly-supported distributions do not generally agree for higher-order tensors. 
Therefore, unlike the case for $3$-tensors, quantum functionals for higher-order tensors are not necessarily \textit{universal} spectral points.
In this section, we prove that quantum functionals of laminarly-supported distributions are nevertheless spectral points in the asymptotic spectrum of a particular sub-semiring of higher-order tensors, which we define now.

\begin{definition}
    \label{def:Tnck}
    A tensor $\psi \in \T^k$ is said to be laminar-quantum-robust if for all~$\theta \in \distsnc$, $F^\theta(\psi) = F_\theta(\psi)$.
    The set of laminar-quantum-robust tensors is denoted by $\Tnc^k$.
\end{definition}

\begin{proposition}
    \label{prop:Tnck is a semiring}
    The laminar-quantum-robust tensors~$\Tnc^k$ form a sub-semiring of~$\T^k$.
    Moreover, for every~$\theta \in \distsnc$, the restriction of $F^\theta$ (or equivalently $F_\theta$) to $\Tnc^k$ is in~$\asympspec(\Tnc^k)$.
\end{proposition}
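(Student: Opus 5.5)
The plan is a sandwich argument between the upper and lower quantum functionals, using \cref{prop:upper qfunc properties}, \cref{prop:lower qfunc properties}, and the general inequality $F_\theta \leq F^\theta$ for $\theta \in \distsnc$. We first check that $\Tnc^k$ contains the unit tensors. For every $\theta\in\distsnc$, item (i) of both propositions gives $F^\theta(\unitk{n}{k}) = n = F_\theta(\unitk{n}{k})$, so $\unitk{n}{k}\in\Tnc^k$; in particular the multiplicative unit $\unitk{1}{k}$ lies in $\Tnc^k$.

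Next we show closure under the two operations. Let $\psi,\varphi\in\Tnc^k$ and fix $\theta\in\distsnc$. For the direct sum we chain
\[
F_\theta(\psi)+F_\theta(\varphi) \leq F_\theta(\psi\oplus\varphi) \leq F^\theta(\psi\oplus\varphi) \leq F^\theta(\psi)+F^\theta(\varphi).
\]
The first inequality is super-additivity of $F_\theta$, the second is $F_\theta\le F^\theta$, and the third is sub-additivity of $F^\theta$. The two ends agree by assumption, so every inequality is an equality. In particular $F_\theta(\psi\oplus\varphi)=F^\theta(\psi\oplus\varphi)$. The same chain with products, using super- and sub-multiplicativity, gives $F_\theta(\psi\kron\varphi)=F^\theta(\psi\kron\varphi)$. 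Since $\theta$ was arbitrary, $\psi\oplus\varphi$ and $\psi\kron\varphi$ lie in $\Tnc^k$. Associativity, commutativity and distributivity are inherited from $\T^k$, so $\Tnc^k$ is a unital sub-semiring.

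For the spectral-point claim, let $F = F^\theta|_{\Tnc^k} = F_\theta|_{\Tnc^k}$. The equality chains above already show that $F$ is additive and multiplicative on $\Tnc^k$, and $F(\unitk{n}{k})=n$. Monotonicity under restriction follows from item (iv), because restriction implies degeneration. We also need $F$ to take values in $\R_{\geq 1}$, or at least to be nonzero on nonzero tensors. This holds because every nonzero tensor restricts to $\unitk{1}{k}$, so $F(\psi)\ge F(\unitk{1}{k})=1$.

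The one step that needs care is the input $F_\theta\le F^\theta$. The paper uses it but does not state it as a numbered result. I would cite it from \cite{cvz23universal}, where it is proved for laminar $\theta$. The restriction order on $\Tnc^k$ is inherited from $\T^k$, so it is a Strassen preorder, and $\asympspec(\Tnc^k)$ therefore makes sense.
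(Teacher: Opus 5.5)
Your proof is correct and is essentially the paper's argument: the same sandwich chain $F_\theta(\psi)+F_\theta(\varphi)\le F_\theta(\psi\oplus\varphi)\le F^\theta(\psi\oplus\varphi)\le F^\theta(\psi)+F^\theta(\varphi)$ (and its multiplicative analogue), followed by reading off the spectral-point properties from the two property lists. The differences are minor: you get $F^\theta(\unitk{n}{k})=n$ directly from item (i) rather than from $F^\theta\le 2^{M^\theta}$, and you spell out the monotonicity and $F\ge 1$ checks that the paper leaves implicit.
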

\begin{proof}
    First of all, we observe that the unit tensors are in~$\Tnc^k$.
    Clearly~$F_\theta(\unitk{n}{k}) = n$ for all~$n \geq 1$, and~$F^\theta(\unitk{n}{k}) \leq n$ by~\cref{prop:E_upper_M_bound}.
    Next, suppose~$\psi,\varphi \in \Tnc^k$ and let~$\theta \in \distsnc$.
    By superadditivity of~$F^\theta$ and subadditivity of~$F_\theta$, we obtain
    \[
        F^\theta(\psi) + F^\theta(\varphi) 
        \geq F^\theta(\psi \oplus \varphi)
        \geq F_\theta(\psi \oplus \varphi)
        \geq F_\theta(\psi) + F_\theta(\varphi).
    \]
    Since~$\psi,\varphi \in \Tnc^k$, these inequalities are all equalities, and hence~$\psi \oplus \varphi \in \Tnc^k$.
    The proof that~$\psi \kron \varphi \in \Tnc^k$ follows similarly.
    Therefore~$\Tnc^k$ contains the unit tensors, is closed under addition and multiplication, and hence a sub-semiring of~$\T^k$.
    The claim that~$F^\theta = F_\theta$ is a spectral point of~$\Tnc^k$ now follows directly from~\cref{prop:upper qfunc properties,prop:lower qfunc properties}.
\end{proof}

An extension result of Strassen \cite[Prop.~3.1.1]{Strassen1988spectrum} (see also~\cite[Thm.~4.16]{wigdersonAsymptoticSpectraTheorya}) implies the existence of universal spectral points which agree with~$F^\theta=F_\theta$ on~$\Tnc^k$:
\begin{proposition}
  \label{prop:Ftheta admits extension}
  For every~$\theta \in \distsnc^k$, there exists a spectral point~$F \in \asympspec(\T^k)$ such that~$ F(\psi) = F^\theta(\psi) = F_\theta(\psi)$ for all~$\psi \in \Tnc^k$.
\end{proposition}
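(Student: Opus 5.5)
The plan is to apply \cref{thm:spectral point extension} to the inclusion map $f\colon \Tnc^k \hookrightarrow \T^k$, equipping $\Tnc^k$ with the restriction preorder $\leq$ inherited from $\T^k$. We then feed in the spectral point supplied by \cref{prop:Tnck is a semiring}.

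\textbf{Step 1: the inclusion is an order-embedding homomorphism.} By \cref{prop:Tnck is a semiring}, $\Tnc^k$ is a unital sub-semiring of $\T^k$. The inclusion therefore preserves $\oplus$, $\kron$ and the unit $\unitk{1}{k}$, so it is a semiring homomorphism. Since the preorder on $\Tnc^k$ is by definition the restriction of the one on $\T^k$, we have $a \leq b$ in $\Tnc^k$ if and only if $f(a) \leq f(b)$ in $\T^k$. Hence $f$ is an order embedding.

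\textbf{Step 2: $\leq$ is a Strassen preorder on $\Tnc^k$.} As recalled in the preliminaries, the restriction preorder on $\T^k$ is a Strassen preorder, and this property is inherited by sub-semirings carrying the restricted order. This step is the only point needing a little care. The relevant axioms are:
\begin{itemize}
\item compatibility with $\oplus$ and $\kron$;
\item $n \leq m$ for natural numbers iff $n \leq m$ as unit tensors;
\item for nonzero $b$, the existence of $r$ with $a \leq r\, b$.
\end{itemize}
For the last axiom we would take $r$ to be the tensor rank of $a$, using $\unitk{1}{k} \leq b$ for any nonzero $b$. Since all unit tensors $\unitk{r}{k}$ lie in $\Tnc^k$, this witness stays inside the sub-semiring, so each axiom is verified within $\Tnc^k$ itself and not just in $\T^k$. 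I expect no real obstacle here; it is essentially bookkeeping.

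\textbf{Step 3: conclude.} By \cref{prop:Tnck is a semiring}, the function $G \coloneq F^\theta|_{\Tnc^k} = F_\theta|_{\Tnc^k}$ lies in $\asympspec(\Tnc^k)$. By \cref{thm:spectral point extension}, the pullback map $f^*\colon \asympspec(\T^k) \to \asympspec(\Tnc^k)$, $F \mapsto F \circ f$, is surjective. So there is $F \in \asympspec(\T^k)$ with $F \circ f = G$. Unwinding the definitions gives $F(\psi) = F^\theta(\psi) = F_\theta(\psi)$ for every $\psi \in \Tnc^k$, which is exactly the claim.
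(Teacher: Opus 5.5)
Your proposal is correct and is essentially the paper's argument: the paper obtains the statement directly from Strassen's extension theorem (\cref{thm:spectral point extension}), applied to the inclusion $\Tnc^k \hookrightarrow \T^k$ with the inherited restriction preorder, and uses that $F^\theta|_{\Tnc^k} = F_\theta|_{\Tnc^k} \in \asympspec(\Tnc^k)$ by \cref{prop:Tnck is a semiring}. Your Step~2 spells out the inheritance of the Strassen-preorder axioms, which the paper takes for granted; note that the relevant witness is the element $r b$, which lies in $\Tnc^k$ because $\Tnc^k$ is closed under $\oplus$.
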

Recall from~\cref{prop:general-lower-func-not-spectral} that the lower quantum functional $F_{\theta}$ (for any $\theta \in \distsnc^k\setminus\distssing^k$) is \textit{not} a spectral point for all $k$-tensors and therefore $F \neq F_\theta$. 
Nevertheless, it is possible that the upper quantum functional $F^{\theta}$ is a spectral point for all $k$-tensors and thus it is possible that $ F = F^\theta$.

Although the above establishes existence of spectral points extending~$F^\theta = F_\theta$ from~$\Tnc^k$ to~$\T^k$, it is currently unclear whether these spectral points provide new information about asymptotic restrictions.
Concretely, we pose the following problem:
\begin{problem}
    Do there exist tensors~$\psi, \varphi \in \Tnc^k$ such that $F(\psi) \leq F(\varphi)$ for all previously known spectral points $F$, but nevertheless there exists a distribution~$\theta \in \distsnc^k$ such that $F^\theta(\psi) > F^\theta(\varphi)$?
\end{problem}
By known spectral points, we mean singleton quantum functionals, or spectral points obtained by grouping parties together and then applying singleton quantum functionals.

We now turn to the problem of finding examples of laminar-quantum-robust tensors.
\begin{remark}
    \label{remark:AME-states-are-in-Tnc}
    Let~$\psi \in (\C^d)^{\ot k}$ be an \emph{absolutely maximally entangled state}, i.e., for all~$\emptyset \neq S \subsetneq [k]$ with~$\abs{S} \leq k/2$, one has~$\rho_S = \psi_S \psi_S^* / \norm{\psi}^2 = I_{d^s} / d^s$.
    Then clearly~$H_\theta(\psi) = M^\theta(\psi) \leq E_\theta(\psi) \leq E^\theta(\psi) \leq M^\theta(\psi)$ for all~$\theta \in \distsnc^k$ (by \cref{prop:E_upper_M_bound}), and so~$\psi \in \Tnc^k$.
    Such tensors are well-studied in quantum information theory, see e.g.~\cite{higuchiHowEntangledCan2000a,helwigAbsoluteMaximalEntanglement2012,goyenecheAbsolutelyMaximallyEntangled2015} and~\cite{rajchel-mieldziocAbsolutelyMaximallyEntangled2025} for a recent survey.
    
    More generally, if~$\psi \in (\C^d)^{\otimes k}$ is such that for every~$S \subsetneq [k]$, $\psi_S \psi_S^*$ is proportional to an orthogonal projector, then $M^\theta(\psi) = H^\theta(\psi) = E_\theta(\psi) = E^\theta(\psi)$.
    A canonical example is the \emph{determinant} tensor $\det_k = e_1 \wedge \dotsb \wedge e_k \in (\C^k)^{\otimes k}$.
\end{remark}

Our next result provides more examples of tensors which belong to the sub-semiring $\Tnc^k$ of $k$-tensors.
\begin{theorem}\label{thm:functionals-for-3structures}
    Any embedding of a unit tensor of order $\ell<k$, i.e. $\unitk{r}{\ell}\otimes e_1^{\otimes k-\ell}$, all the embeddings of $3$-tensors, and the embeddings of $\W_4 = e_2 \ot e_1 \ot e_1 \ot e_1 + e_1 \ot e_2 \ot e_1 \ot e_1 + e_1 \ot e_1 \ot e_2 \ot e_1 + e_1 \ot e_1 \ot e_1 \ot e_2$  as $k$-tensors, are in $\Tnc^k$. 
\end{theorem}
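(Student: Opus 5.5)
The plan has three parts, one for each class of tensors in the statement: a general embedding lemma, a reduction of embedded $3$-tensors to the singleton case, and a direct computation for $\W_4$.

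\textbf{Embedding lemma.} The first step is a general statement: if $\psi\in\Tnc^\ell$, then its $S$-embedding into $k$ legs lies in $\Tnc^k$. The mechanism is the same as in the proof of \cref{thm:upper lower separation general k}. Let $\theta\in\distsnc^k$ and let $\tilde\psi$ be the embedding of $\psi$ into the legs $S$. Every bipartition $b=\{T,\overline T\}$ of $[k]$ restricts to the pair $\{T\cap S,\overline T\cap S\}$. This pair is either a bipartition of $S$, or trivial, in which case one side is empty.

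On the one-dimensional legs, the projectors $P^{V_b}_\lambda$ act exactly like the corresponding projectors for the induced bipartition of $S$. For a trivial restriction, $V_T^{\ot n}$ is effectively a symmetric space, so only $\lambda=(n)$ survives and it contributes zero entropy. The same reasoning applies to marginals: $H_b(\tilde\psi)=H_{b|_S}(\psi)$, with trivial bipartitions contributing $0$. The action of $\GL$ on one-dimensional legs is by scalars, so the orbit closures match as well.

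Pushing $\theta$ forward along $b\mapsto b|_S$ and discarding trivial bipartitions gives a weighting $\kappa\tilde\theta$ on bipartitions of $S$. This gives
\[
E_\theta(\tilde\psi)=\kappa E_{\tilde\theta}(\psi),\qquad E^\theta(\tilde\psi)=\kappa E^{\tilde\theta}(\psi).
\]
The step that needs checking is that $\tilde\theta$ is again laminar. This holds because restricting two mutually laminar bipartitions to $S$ preserves the containment $A\subseteq B$. Several bipartitions may also merge into the same restricted one; for the upper functional I then use that $P^{V_b}_\lambda$ and $P^{V_{b'}}_\mu$ coincide as operators once $b|_S=b'|_S$. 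Nonzero products therefore force equal diagrams, and the sum of weights behaves correctly. Hence agreement for $\psi$ implies agreement for $\tilde\psi$.

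\textbf{Unit and $3$-tensors.} With the lemma in hand, these cases follow quickly. Every $3$-tensor lies in $\Tnc^3=\T^3$ by \cref{thm:sing-qfunc-agree}, since $\distsnc^3=\distssing^3$. For unit tensors $\unitk{r}{\ell}$ I argue directly rather than through the lemma. The marginal across every bipartition is $I_r/r$ on the $r$-dimensional support, so $H_\theta=M^\theta$, and the sandwich $M^\theta=H_\theta\le E_\theta\le E^\theta\le M^\theta$ from \cref{prop:E_upper_M_bound} gives agreement. The embedding lemma then covers embeddings of both.

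\textbf{The tensor $\W_4$.} This is the main obstacle, because $\W_4$ is not maximally entangled across any bipartition, so no sandwich argument applies. All flattening ranks equal $2$, since the $AB|CD$ flattening is spanned by $e_{11}$ and $e_{12}+e_{21}$. The first step is to compute $E^\theta(\W_4)$ by upper-bounding the admissible Young diagrams: each has at most two rows. For $\W$-type tensors the second row of the diagram across $T|\overline T$ can have length at most $n\cdot\min(|T|,|\overline T|)/4$ asymptotically; this is a known computation for singletons, $F=2^{h(1/4)}$, and should extend by counting the Hamming weight of the excitation in $T$. This suggests $E^\theta(\W_4)\le\sum_b\theta_b\,h(\min(|T_b|,4-|T_b|)/4)$.

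For the matching lower bound on $E_\theta$, I would exhibit a degeneration or local rescaling of $\W_4$ that achieves these marginal spectra simultaneously. The natural candidate is $\W_4$ itself: its $T$-marginal has eigenvalues $(|T|/4,\,1-|T|/4)$, so $H_\theta(\W_4)$ equals the bound above. The real work is then the upper bound on $E^\theta$. Here I would use laminarity: choose sides so that the supports form a laminar family, and bound the dimension of
\[
\prod_b P^{V_b}_{\lambda^{(b)}}\,\mathrm{Sym}\text{-type span of }\W_4^{\ot n}.
\]
The vector $\W_4^{\ot n}$ lies in the span of basis vectors with exactly one excitation per copy. Projecting onto a diagram across $T$ whose second row exceeds the number of excitations landing in $T$ kills the vector, and a concentration and typicality argument over the distribution of excitations gives the entropy bound. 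I expect this nested Schur--Weyl estimate, carried out jointly for all bipartitions of a laminar family, to be the hardest step.
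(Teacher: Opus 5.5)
Your embedding lemma and your treatment of unit tensors and $3$-tensors are correct and follow the paper's route (\cref{lem:lower qfunc on embedding,lem:upper qfunc on embedding,lem:embedded-tensors}). Your remark that projectors of bipartitions with the same restriction coincide, so that a nonzero product forces equal diagrams, is exactly what justifies merging the weights.

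The gap is in the $\W_4$ part, where the value you set out to prove is false. You claim $E^\theta(\W_4)\le\sum_b\theta_b\,h(\min(|T_b|,4-|T_b|)/4)=H_\theta(\W_4)$, i.e.\ that $\W_4$ itself is optimal. Take the laminar weighting with $\theta(A|BCD)=1$. The $A$-flattening of $\W_4$ has rank $2$, so $P^{A}_\lambda\W_4^{\kron n}\neq 0$ for every two-row $\lambda$, in particular $\lambda=(n/2,n/2)$ for even $n$. Hence $E^\theta(\W_4)=1>h(1/4)$. The lower functional also equals $1$ here, because $(I\ot I\ot\mathrm{diag}(1,\epsilon)\ot\mathrm{diag}(1,\epsilon))\W_4\to(e_2\ot e_1+e_1\ot e_2)\ot e_1\ot e_1$, which has maximally mixed $A$-marginal. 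The value $2^{h(1/4)}$ is correct only for the uniform singleton weighting. So on the lower side, $\W_4$ is not the maximizer of $H_\theta$ over its orbit closure, and on the upper side no argument can give your bound.

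The underlying error is methodological. The upper functional records \emph{non-vanishing} of projections, which sees the whole moment polytope of the orbit closure, not the typical spectrum of $\W_4$. Every excitation pattern occurs in $\W_4^{\kron n}$ with nonzero amplitude. A concentration or typicality argument on excitation counts can therefore only show that a projected norm is exponentially small, never that it is zero.

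What is missing is a domination argument in place of a closed formula; this is how the paper proceeds.
\begin{enumerate}
\item By laminarity on four parties, $\supp\theta$ consists of singletons and at most one $2|2$ bipartition, which by symmetry of $\W_4$ you may take to be $AB|CD$.
\item Let $(\alpha,\beta,\gamma,\delta,z)$ be the normalized second rows of diagrams whose commuting projectors jointly do not annihilate $\W_4^{\kron n}$. Sub-products of these projectors give necessary conditions:
\begin{enumerate}
\item all five quantities are at most $1/2$;
\item grouping $AB$ into one party turns $\W_4$ into (an embedding of) $\W_3$, whose entanglement polytope gives $\gamma+\delta\le 1-z$;
\item grouping $CD$ gives $\alpha+\beta\le1-z$ in the same way, after using that the $AB$- and $CD$-projectors agree on symmetric vectors;
\item the moment polytope of $\W_4$ gives $\alpha+\beta+\gamma+\delta\le 1$.
\end{enumerate}
\item Move coordinates toward $1/2$, which can only increase each binary entropy. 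This reaches a point with $\alpha'+\beta'+\gamma'+\delta'=1$ and $z\le z'=\min(\alpha'+\beta',\gamma'+\delta')\le 1/2$.
\item That point is realized exactly by the restriction $\sqrt{\alpha'}\,e_2\ot e_1\ot e_1\ot e_1+\dotsb+\sqrt{\delta'}\,e_1\ot e_1\ot e_1\ot e_2$ of $\W_4$.
\end{enumerate}
This yields $E^\theta(\W_4)\le E_\theta(\W_4)$ for every laminar $\theta$ without computing either side explicitly.
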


We prove \cref{thm:functionals-for-3structures} separately for embedded tensors in~\cref{lem:embedded-tensors} and for the~$\W_4$ in~\cref{thm:W4-non-crossing-agree}.

\begin{remark}
    As $\Tnc^k$ is a semiring, \cref{thm:functionals-for-3structures} implies that $\Tnc^k$ includes entanglement structures as in \cite[Def. 11]{christandl2020tensornetworksreps} and \cite{Christandl2024resourcetensors}, as long as the hyperedges of order at least $4$ use AME states, $\W_4$ or $\unitk{r}{\ell}$. It also includes direct sums of such entanglement structures.
\end{remark}

\subsection{Embedded tensors are in \texorpdfstring{$\Tnc^k$}{Tnc}}

We first establish that the upper and lower quantum functionals can be easily evaluated on embedded tensors, by appropriately redistributing the weight on the bipartitions. A common ingredient is the following definition:
\begin{definition}
    Let~$1 \leq \ell < k$.
    For a distribution $\theta\in\dists^k$ define its restricted distribution to $[\ell]$ by 
    \[
        \theta'_{S} \coloneq\frac{1}{C_{\theta,\ell}}\sum_{\substack{\{T,\overline{T}\}\in \bipartitions^k \\ T\cap[\ell]=S}}\theta_T
    \]
    with the normalization factor
    \[
        C_{\theta,\ell} \coloneq\sum_{\substack{\{T,\overline{T}\}\in \bipartitions^k\\ \emptyset\neq T \cap[\ell]\subsetneq[\ell]}}\theta_T.
    \]
\end{definition}
\begin{remark}
    Note that if $\theta$ is laminar, then so is $\theta'$: the support of $\theta'$ is the restrictions of the bipartitions in the support of $\theta$ to bipartitions of $[\ell]$, and weak inclusion relations are preserved by an intersection with another subset (if $S_1\subseteq S_2$ then $S_1\cap[\ell]\subseteq S_2\cap[\ell]$). 
\end{remark}
\begin{lemma}
    \label{lem:lower qfunc on embedding}
    Let~$\psi \in \C^{d_1} \ot \dotsb \ot \C^{d_\ell}$, let~$\theta \in \dists^k$ and~$\theta'$ its restricted distribution on~$[\ell]$.
    Then
    \[
        E_\theta(\psi \ot e_1^{\ot (k-\ell)}) = C_{\theta,\ell} E_{\theta'}(\psi).
    \]
\end{lemma}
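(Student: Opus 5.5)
The plan is to compare, bipartition by bipartition, the marginal entropies of the embedded tensor with those of $\psi$, and then observe that the relevant group actions correspond. Write $\tilde\psi = \psi \ot e_1^{\ot(k-\ell)}$ and $V = \C^{d_1}\ot\dotsb\ot\C^{d_\ell}\ot(\C^1)^{\ot(k-\ell)}$.

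First we establish a pointwise identity. Let $\{T,\overline T\}\in\bipartitions^k$ and $S = T\cap[\ell]$. Because the legs $\ell+1,\dots,k$ are one-dimensional, the $T$-flattening of any $\tilde\varphi=\varphi\ot e_1^{\ot(k-\ell)}$ is, up to the trivial identifications $\C^{d}\ot\C^1\cong\C^d$, the $S$-flattening of $\varphi$. Hence $\rho_T(\tilde\varphi)\cong\rho_S(\varphi)$ and $H_T(\tilde\varphi)=H_S(\varphi)$ whenever $\emptyset\neq S\subsetneq[\ell]$. When $S=\emptyset$ or $S=[\ell]$, the marginal lives on a one-dimensional space (either $V_T$ or $V_{\overline T}$ has dimension $1$), so it is a $1\times 1$ trace-one matrix and $H_T(\tilde\varphi)=0$. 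Summing with weights $\theta_T$ and grouping the bipartitions according to $S=T\cap[\ell]$ gives
\[
H_\theta(\tilde\varphi)=\sum_{\emptyset\neq S\subsetneq[\ell]}\Bigl(\sum_{T\cap[\ell]=S}\theta_T\Bigr)H_S(\varphi)=C_{\theta,\ell}\,H_{\theta'}(\varphi).
\]
Here I will be careful that $S$ and $[\ell]\setminus S$ name the same bipartition of $[\ell]$, so the grouping must be read in the same unordered sense in which $\theta'$ is defined.

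Next we take suprema. Any $A_1\ot\dotsb\ot A_k$ with $A_j\in\GL(V_j)$ acts on $\tilde\psi$ as $(A_1\ot\dotsb\ot A_\ell)\psi\ot (\prod_{j>\ell}a_j)e_1^{\ot(k-\ell)}$, where the $a_j\in\C^\times$ are the scalars $A_j$ for $j>\ell$. Since $H_\theta$ is scale invariant, the set of values $\{H_\theta(g\tilde\psi)\}$ over $g$ in the $k$-fold product of $\GL$'s equals $\{C_{\theta,\ell}H_{\theta'}(g'\psi)\}$ over $g'$ in the $\ell$-fold product. Taking suprema as in \cref{def:lower qfunc} yields $E_\theta(\tilde\psi)=C_{\theta,\ell}E_{\theta'}(\psi)$.

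The main obstacle is the degenerate case $C_{\theta,\ell}=0$, where $\theta'$ is undefined. In that case both sides should be read as $0$: every bipartition in the support of $\theta$ has $T\cap[\ell]\in\{\emptyset,[\ell]\}$, so $H_\theta\equiv0$ on embedded tensors, and we adopt the convention that $C_{\theta,\ell}E_{\theta'}=0$. Apart from this, the argument is just careful bookkeeping of the flattening identifications.
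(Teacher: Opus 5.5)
Your proposal is correct and follows essentially the paper's proof. The paper likewise shows that the $T$-marginal of $(A_1\ot\dotsb\ot A_k)\tilde\psi$ equals the $(T\cap[\ell])$-marginal of $(A_1\ot\dotsb\ot A_\ell)\psi$ up to rank-one factors and scalars, and then regroups the weights to pull out $C_{\theta,\ell}$. Your explicit handling of the bipartitions with $T\cap[\ell]\in\{\emptyset,[\ell]\}$, of the unordered reading of $\theta'$, and of the degenerate case $C_{\theta,\ell}=0$ fills in points the paper passes over silently.
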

\begin{proof}
    Recall that for a~$k$-tensor~$\varphi \in \C^{d_1} \ot \dotsb \ot \C^{d_k}$, 
    \[
        E_\theta(\varphi) = \sup_{\substack{A_i\in \GL_{d_i}(\C)\\i\in[k]}}\sum_{\{S,\overline{S}\}\in \supp \theta}\theta_{S} H_S((A_1 \ot \dotsb \ot A_k) \varphi),
    \]
    where
    \begin{align*}
        H_S(\varphi) = H(\frac{\varphi_S \varphi_S^*}{\norm{\varphi}^2}).
    \end{align*}
    If~$\varphi = (A_1 \ot \dotsb \ot A_k) (\psi \ot e_1^{\ot (k-\ell)})$, then the~$S$-flattening~$\varphi_S$ of~$\varphi$ satisfies
    \[
        \varphi_S \varphi_S^* = (\bigotimes_{i \in S \cap [\ell]} A_i) \psi_S (\bigotimes_{i \in \overline{S} \cap [\ell]} A_i^* A_i) \psi_S^* (\bigotimes_{i \in S \cap [\ell]} A_i^*) \otimes \bigotimes_{i = \ell+1, i \in S}^k A_i e_1 e_1^* A_i^* \cdot \prod_{i = \ell+1, i \in \overline{S}}^k e_1^* A_i^* A_i e_1
    \]
    Note that the $e_1 A_i^* A_i e_1$ are just scalars.
    The first part is equal to~$\xi_{S \cap [\ell]} \xi_{S \cap [\ell]}^*$ where~$\xi = (\ot_{i \in [\ell]} A_i) \psi$.
    The second part is a tensor product of (unnormalized) rank~$1$ projectors, and hence does not affect the non-zero eigenvalues of the normalized operator.
    The final rescaling factor can be ignored for computing~$H_S$, as it gets absorbed into the normalization.
    We conclude that $H_S(\varphi) = H_{S \cap [\ell]}(\xi)$.
    Therefore
    \begin{align*}
        E_\theta(\psi \ot e_1^{\otimes k-\ell}) 
        & = \sup_{\substack{A_i\in \GL_{d_i}(\C)\\i\in[k]}} \sum_{\{S,\overline{S}\}\in \supp \theta}\theta_{S} H_S((A_1 \ot \dotsb \ot A_k) (\psi \ot e_1^{\ot k-\ell})) \\
        & = \sup_{\substack{A_i\in \GL_{d_i}(\C)\\i\in[\ell]}} \sum_{\{S,\overline{S}\}\in \supp \theta}\theta_{S} H_{S \cap [\ell]}((A_1 \ot \dotsb \ot A_\ell) \psi) \\
        & = \sup_{\substack{A_i\in \GL_{d_i}(\C)\\i\in[\ell]}} \sum_{\{S,\overline{S}\}\in \supp \theta'} \big(\sum_{\substack{\{T, \overline{T}\} \in \supp \theta \\ T \cap [\ell] = S} }\theta_{S}\big) H_S((A_1 \ot \dotsb \ot A_\ell) \psi) \\
        & = C_{\theta,\ell} \sup_{\substack{A_i\in \GL_{d_i}(\C)\\i\in[\ell]}} \sum_{\{S,\overline{S}\}\in \supp \theta'} \frac{1}{C_{\theta,\ell}} \big(\sum_{\substack{\{T, \overline{T}\} \in \supp \theta \\ T \cap [\ell] = S} }\theta_{S}\big) H_S((A_1 \ot \dotsb \ot A_\ell) \psi) \\
        & = C_{\theta,\ell} \sup_{\substack{A_i\in \GL_{d_i}(\C)\\i\in[\ell]}}\sum_{\{S,\overline{S}\}\in \supp \theta'} \theta'_S H_S((A_1 \ot \dotsb \ot A_\ell) \psi) = C_{\theta,\ell} E_{\theta'}(\psi). \qedhere
    \end{align*}
\end{proof}
\begin{lemma}
    \label{lem:upper qfunc on embedding}
    Let~$\psi \in \C^{d_1} \ot \dotsb \ot \C^{d_\ell}$, let~$\theta \in \distsnc^k$ and~$\theta'$ its restricted distribution on~$[\ell]$.
    Then
    \[
        E^\theta(\psi \ot e_1^{\ot (k-\ell)}) = C_{\theta,\ell} E^{\theta'}(\psi).
    \]
\end{lemma}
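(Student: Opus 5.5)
The plan is to compare, for each $n$, the set of admissible tuples of Young diagrams for $\varphi := \psi \ot e_1^{\ot(k-\ell)}$ with the set of admissible tuples for $\psi$. Write $V = V_{[\ell]} \ot (\C^1)^{\ot(k-\ell)}$. Then $\varphi^{\kron n}$ is canonically identified with $\psi^{\kron n} \ot (e_1^{\ot n})^{\ot (k-\ell)}$, and the legs $\ell+1,\dots,k$ are one-dimensional. Fix $b = \{T,\overline T\} \in \supp\theta$ and put $S = T\cap[\ell]$. The space $V_T^{\ot n}$ is then $V_S^{\ot n} \ot (\C^1)^{\ot n}$, with $\GL(V_T)$ containing $\GL(V_S)$ acting identically (the $\C^1$ factors carry the trivial symmetric-group action). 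Hence the isotypic projector $P^{V_T}_\lambda$ is exactly $P^{V_S}_\lambda \ot \mathrm{id}$. This holds because the $S_n$-isotypic decomposition of $V_T^{\ot n}$ only sees the permutation action, which agrees with that on $V_S^{\ot n}$.

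This gives three cases.
\begin{enumerate}
\item If $S = \emptyset$ or $S = [\ell]$, then (after swapping $T$ and $\overline T$) $V_T$ is one-dimensional. Its $n$-th tensor power is the trivial representation, so $P^{V_b}_\lambda\varphi^{\kron n} \neq 0$ forces $\lambda = (n)$, whose entropy is $0$. The same holds when $S=[\ell]$, since $\overline T$ is then purely one-dimensional and the projector on the complementary side gives the same isotypic component.
\item If $\emptyset \ne S \subsetneq [\ell]$, then $P^{V_b}_\lambda = P^{V_{S|[\ell]\setminus S}}_\lambda \ot \mathrm{id}$. The symmetric projector $P^{V_{[k]}}_{(n)}$ also restricts to $P^{V_{[\ell]}}_{(n)}$, since all of it is diagonal in $S_n$ acting simultaneously.
\item Combining the two cases, $\prod_{b} P^{V_b}_{\lambda^{(b)}}\varphi^{\kron n} = \bigl(\prod_{b} P^{V_{T\cap[\ell]}}_{\lambda^{(b)}}\psi^{\kron n}\bigr)\ot e_1^{\ot\cdots}$. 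The bipartitions with trivial restriction contribute only the requirement $\lambda^{(b)}=(n)$, which costs no entropy.
\end{enumerate}

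Next, group the bipartitions $b$ with the same restriction $S$. Several $b$ may restrict to the same $S|[\ell]\setminus S$, and their projectors all equal the same $P^{V_{S'}}_{\lambda}$, where $S'$ is the restricted bipartition. Projectors onto different isotypic components are orthogonal, so the product is nonzero only if all those $\lambda^{(b)}$ coincide; call the common value $\lambda^{(S)}$. Therefore the nonvanishing tuples for $\varphi$ correspond bijectively to nonvanishing tuples $(\lambda^{(S)})_{S\in\supp\theta'}$ for $\psi$, padded with $(n)$ on the trivial bipartitions. The objective becomes
\[
\sum_{b}\theta(b)H(\overline{\lambda^{(b)}}) = \sum_{S}\Bigl(\sum_{T\cap[\ell]=S}\theta_T\Bigr)H(\overline{\lambda^{(S)}}) = C_{\theta,\ell}\sum_S\theta'_S H(\overline{\lambda^{(S)}}).
\]
Taking suprema over $n$ and over tuples gives $E^\theta(\varphi) = C_{\theta,\ell}E^{\theta'}(\psi)$. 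The remark before the lemma guarantees $\theta'$ is laminar, so $E^{\theta'}$ is well defined. If $C_{\theta,\ell}=0$, both sides are $0$: the right side vanishes by convention, and the left side vanishes because only $(n)$ is allowed.

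The main obstacle is the identification $P^{V_T}_\lambda = P^{V_S}_\lambda\ot \mathrm{id}$ together with the orthogonality step. This needs care because $P^{V_b}_\lambda$ was defined as $P^{V_S}_\lambda P^{V_{[k]}}_{(n)}$ with the symmetrizer included, and also because of the orientation ambiguity of which side of $b$ is used. I would handle both by writing the isotypic projectors via the character formula $P_\lambda = \frac{\dim[\lambda]}{n!}\sum_{\pi}\chi_\lambda(\pi)\,\pi_{T}$, with $\pi$ acting on the $T$-legs. On the one-dimensional legs $\pi$ acts as the identity, so $\pi_T = \pi_S\ot\mathrm{id}$ and the identity is immediate. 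On the symmetric subspace, acting by $\pi$ on the $T$-legs equals acting by $\pi^{-1}$ on the $\overline T$-legs, and $\chi_\lambda(\pi) = \chi_\lambda(\pi^{-1})$, so the choice of side is irrelevant.
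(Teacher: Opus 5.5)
Your proof is correct and follows essentially the same route as the paper: use the character formula to identify each isotypic projector on $V_T$ with the one on $V_{T\cap[\ell]}$ tensored with the identity, kill the bipartitions with trivial restriction via the complementary-side identity on the symmetric subspace, and regroup the weights into $\theta'$. Your explicit orthogonality argument is the one real addition: it forces equal diagrams on bipartitions with the same restriction, a step the paper's chain of equalities passes over silently. The same goes for your separate handling of the case $C_{\theta,\ell}=0$.
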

\begin{proof}
    We need a tool analogous to ignoring tensor-product vectors for the representational expressions these functionals use.
    In other words, we use the fact that a tensor-product factor on one of the local parties can be ignored: by \cref{eq:proj-restricted-char-form}, if $e_1$ is a non-zero vector on one of the local parties, w.l.o.g. on the last one $k$, $\emptyset\neq S\subsetneq[k]$ is a nontrivial subset of the parties, and $\lambda\vdash n$ is a partition, we first show that the bipartition $\{S,\overline{S}\}=\{[k-1],\{k\}\}$ can be ignored. For this bipartition, by the second equality of \cref{lem:complementaryProjectionsOnSymmetric}, $P^{V_S}_\lambda(\psi \otimes e_1)^{\kron n} = P^{V_{k}}_\lambda(\psi \otimes e_1)^{\kron n}= \psi^{\kron n} \otimes (P_\lambda e_1^{\kron n})$ which is non-zero only for $\lambda=(n)$.
    Therefore, this projector does not change the state on one hand, and on the other hand the entropy~$H(\lambda/n) = 0$, so it can be ignored for evaluating the upper  quantum functional.
    Next, suppose $S\setminus\{k\}$ is a non-trivial subset of $[k-1]$, in which case 
    \begin{align*}
        P^{V_S}_\lambda (\psi \otimes e_1)^{\kron n}
        & = \frac{\dim([\lambda])}{n!}\sum_{\pi\in S_n}\chi^*_\lambda(\pi)\tau_S(\pi)\otimes\tau_{\overline{S}}(e)(\psi^{\kron n}\otimes e_1^{\kron n}) \\
        & = \big(\frac{\dim([\lambda])}{n!}\sum_{\pi\in S_n}\chi^*_\lambda(\pi)\tau_{S\setminus\{k\}}(\pi)\otimes\tau_{\overline{S}\setminus\{k\}}(e)(\psi^{\kron n})\big)\otimes e_1^{\kron n}.
    \end{align*}
    Here~$\tau_S$ denotes the tensor permutation action on~$V_S^{\ot n}$.
    The equality holds because~$e_1^{\kron n}$ is permutation-invariant.
    Now we can peel off the single-party tensor-products one-by-one to ignore all of them for one projector.
    Then we can propagate our ignoring to the next operators one-by-one to see that $\bigg(\prod_{\{S,\overline{S}\}\in \supp \theta}P^{V_S}_{\lambda_S}\bigg)(\psi\otimes e_1^{\otimes k-\ell})^{\kron n} \neq 0$ iff $\bigg(\prod_{\{S,\overline{S}\}\in \supp \theta} P^{V_{S\cap[\ell]}}_{\lambda_S}\bigg)(\psi^{\kron n}) \neq 0$, which is the tool we use in the chain of equalities below:
    \begin{align*}
        & E^\theta(\psi\otimes e_1^{\otimes k-\ell}) \\
        & =\sup\{\sum_{(\lambda_{S})_{\{S,\overline{S}\}\in \supp \theta}} \theta_S H(\tfrac{1}{n}\lambda_S) : n\in\N, \lambda_S\vdash n, \bigg(\prod_{\{S,\overline{S}\}\in \supp \theta}P^{V_S}_{\lambda_S}\bigg)(\psi\otimes e_1^{\otimes k-\ell})^{\kron n}\neq0\}\\
        & =C_{\theta,\ell}\cdot\sup\{\sum_{\substack{(\lambda_{S})_{\{S,\overline{S}\}\in \supp \theta}\\ \emptyset\neq S\cap[\ell]\subsetneq[\ell]}}\frac{\theta_S}{C_{\theta,\ell}}H(\tfrac{1}{n}\lambda_S):n\in\N,\lambda_S\vdash n, \bigg(\prod_{\substack{\{S,\overline{S}\}\in \supp \theta\\ \emptyset\neq S\cap[\ell]\subsetneq[\ell]}}P^{V_{S\cap[\ell]}}_{\lambda_S}\bigg)(\psi^{\kron n})\neq0\}\\
        & =C_{\theta,\ell}\cdot\sup\{\sum_{(\lambda_{S})_{\{S,\overline{S}\}\in \supp \theta'}}\underbrace{\frac{1}{C_{\theta,\ell}}\bigg(\sum_{\substack{\{T,\overline{T}\}\in \supp \theta\\ T\cap[\ell]=S}}\theta_T\bigg)}_{=\theta'_S}H(\tfrac{1}{n}\lambda_S):n\in\N,\lambda_S\vdash n, \bigg(\prod_{\{S,\overline{S}\}\in \supp \theta'}P^{V_S}_{\lambda_S}\bigg)(\psi^{\kron n}) \neq 0\}\\
        & =C_{\theta,\ell}E^{\theta'}(\psi). \qedhere
    \end{align*}
\end{proof}
\begin{proposition}[Laminar functionals agree on tensor embeddings]\label{lem:embedded-tensors}
  Let~$1 \leq \ell < k$.
  Let $\psi\in\C^{c_1}\otimes\cdots\otimes\C^{c_\ell}$ be an $\ell$-tensor.
  Assume that for every laminarly-supported distribution over the bipartitions of the $\ell$ parties $\theta\in\distsnc^\ell$ the (logarithmic) upper and lower quantum functionals agree on $\psi$: $E^\theta(\psi)=E_\theta(\psi)$. 
  Then, for any~$\theta \in \distsnc^k$ the (logarithmic) upper and lower quantum functionals agree on the embedded tensor~$\tilde\psi = \psi \ot e_1^{\ot (k-\ell)}$: $E^\theta(\tilde\psi) = E_\theta(\tilde \psi)$.
  In other words, any embedding~$\tilde\psi$ of~$\psi \in \Tnc^\ell$ as a~$k$-tensor is in~$\Tnc^k$.
\end{proposition}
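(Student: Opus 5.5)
The plan is to reduce the statement directly to the two embedding lemmas just proved, \cref{lem:lower qfunc on embedding} and \cref{lem:upper qfunc on embedding}. Fix $\theta \in \distsnc^k$ and let $\theta'$ be its restricted distribution on $[\ell]$. The argument splits according to whether the normalization constant $C_{\theta,\ell}$ vanishes.

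\emph{Degenerate case} $C_{\theta,\ell} = 0$. Then every bipartition in $\supp\theta$ meets $[\ell]$ trivially, i.e.\ $T \cap [\ell] \in \{\emptyset, [\ell]\}$. In this case $\theta'$ is not defined, but both lemmas still apply with the convention that the right-hand side is $0$. Concretely, the proofs of the two lemmas show the following. Every marginal entropy $H_S$ with $S\cap[\ell]$ trivial equals the entropy of a rank-one marginal, hence is $0$. Likewise, every projector $P^{V_S}_\lambda$ with $S \cap [\ell]$ trivial acts nontrivially on $\tilde\psi^{\kron n}$ only for $\lambda = (n)$, which has zero entropy. So $E_\theta(\tilde\psi) = E^\theta(\tilde\psi) = 0$ and they agree. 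I would verify this briefly by reading off the computation in the lemmas' proofs, since those proofs treat such bipartitions term by term.

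\emph{Main case} $C_{\theta,\ell} > 0$. Here $\theta'$ is a genuine probability distribution on $\bipartitions^\ell$. By the remark preceding \cref{lem:lower qfunc on embedding}, $\theta'$ is laminar: intersecting with $[\ell]$ preserves the inclusions witnessing mutual laminarity, and bipartitions collapsing to the same $S \cap [\ell]$ merge harmlessly. Hence $\theta' \in \distsnc^\ell$, and the hypothesis gives $E^{\theta'}(\psi) = E_{\theta'}(\psi)$. The two lemmas then yield
\[
E^\theta(\tilde\psi) = C_{\theta,\ell} E^{\theta'}(\psi) = C_{\theta,\ell} E_{\theta'}(\psi) = E_\theta(\tilde\psi).
\]
Since $\theta$ was arbitrary, $\tilde\psi \in \Tnc^k$.

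For a general ordered subset $S$, the $S$-embedding is a permutation of the legs of the standard embedding. Both functionals are equivariant under relabeling parties, with $\theta$ relabeled accordingly, and laminarity is preserved under relabeling. So the general case follows from the standard one.

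The only delicate point is confirming that $\theta'$ is laminar. The issue is that two bipartitions can restrict so that containment holds only between complementary sides within $[\ell]$. I would check this carefully using the complement-symmetric form of \cref{def:mutually-laminar-bipartitions}. If $A \subseteq B$ with $A \in \{S,\overline S\}$ and $B \in \{T,\overline T\}$, then $A\cap[\ell] \subseteq B\cap[\ell]$, and the sides restricted to $[\ell]$ are still complementary within $[\ell]$. This settles it.
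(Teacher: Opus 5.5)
Your proposal is correct and matches the paper's proof, which is the single line that the statement follows from \cref{lem:lower qfunc on embedding,lem:upper qfunc on embedding} together with the laminarity of $\theta'$. You also fill in details the paper leaves implicit: the case $C_{\theta,\ell}=0$, where $\theta'$ is undefined so the lemmas do not literally apply (this always happens when $\ell=1$), and general $S$-embeddings, which you reduce to the standard embedding by relabeling parties.
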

\begin{proof}
    This follows directly from~\cref{lem:lower qfunc on embedding,lem:upper qfunc on embedding}.
\end{proof}

\cref{lem:embedded-tensors} combined with the fact that $\Tnc$ is a semiring (\cref{prop:Tnck is a semiring}) implies that the iterated matrix multiplication, the dome tensor~\cite[\S 4.1]{Christandl2019tensorSurgery}, and more generally any graph tensor as in~\cite[\S 2.1]{Christandl2019tensorSurgery} (see also \cite{Christandl2019AsymptoticRankGraphTensors}) are in $\Tnc$.

As a very particular case, the same normalization of upper and lower laminar quantum functionals on unit tensors, guarantees that they also agree after embedding.
Here we can very explicitly evaluate the upper and lower quantum functionals, and see that they agree:
\begin{lemma}
    \label{lem:ncfunc on subset units}
    Let~$\emptyset \neq S \subsetneq [k]$  and~$\theta \in \distsnc$.
    Consider ``the unit tensor of rank $2$''\footnote{This is not ``rank 2'' when~$\abs{S}=1$, but the formula evaluates to~$0$ as expected (it is equivalent to the rank~$1$ unit tensor).} on~$S$,
    \[
        \unitk{2}{S} = \sum_{i=1}^2 (e_i \ot \dotsb \ot e_i)_{S} \ot (e_1 \ot \dotsb \ot e_1)_{\overline{S}}.
    \]
    Then
    \[
        E^\theta(\unitk{2}{S}) = E_\theta(\unitk{2}{S}) = 1 - \sum_{\substack{b = \{T, \overline{T}\} \in \supp \theta \\ S \subseteq T \, \lor \, S \subseteq \overline{T}}} \theta_b.
    \]
\end{lemma}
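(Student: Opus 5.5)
The plan is to sandwich both functionals between two quantities that visibly coincide on~$\unitk{2}{S}$. On one side is the entropy~$H_\theta(\unitk{2}{S})$ of the tensor itself. On the other side is the flattening-rank bound~$M^\theta(\unitk{2}{S})$ from~\cref{prop:E_upper_M_bound}. These give the chain
\[
    H_\theta(\unitk{2}{S}) \leq E_\theta(\unitk{2}{S}) \leq E^\theta(\unitk{2}{S}) \leq M^\theta(\unitk{2}{S}).
\]
The first inequality is trivial, since the identity lies in~$\GL$. The second is~$E_\theta \leq E^\theta$. The third is~\cref{prop:E_upper_M_bound}, which applies because~$\theta$ is laminar. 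It then suffices to show that the two outer quantities both equal~$\sum_{b} \theta_b$, where the sum runs over the bipartitions~$b = \{T,\overline T\} \in \supp\theta$ that \emph{split}~$S$. Here~$b$ splits~$S$ if both~$S \cap T$ and~$S \cap \overline T$ are non-empty. Since~$\theta$ is a probability distribution and logarithms are base~$2$, this sum equals~$1 - \sum_{S \subseteq T \lor S \subseteq \overline T} \theta_b$, which is the claimed value.

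The key step is a direct computation of the marginal of~$\unitk{2}{S}$ across each bipartition~$b = \{T, \overline T\}$. Write~$\unitk{2}{S} = \sum_{i=1}^2 x_i \ot y_i$ with~$x_i = (e_i^{\ot |S\cap T|})_{S\cap T} \ot (e_1^{\ot |T\setminus S|})_{T \setminus S}$ and~$y_i$ defined analogously on~$\overline T$.

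First suppose~$b$ splits~$S$. Then~$x_1 \perp x_2$ and~$y_1 \perp y_2$, and all four are unit vectors. This is a Schmidt decomposition with two equal coefficients, so~$\rho_T$ has eigenvalues~$(\tfrac12,\tfrac12)$. Hence~$H_b = 1 = \log \trank_b$.

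Now suppose~$S \subseteq T$ (or symmetrically~$S \subseteq \overline T$). Then~$y_1 = y_2$, so the tensor is a product across~$b$. Hence~$\trank_b = 1$ and~$H_b = 0 = \log \trank_b$.

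Summing with the weights~$\theta_b$ gives~$H_\theta(\unitk{2}{S}) = M^\theta(\unitk{2}{S}) = \sum_{b \text{ splits } S} \theta_b$. This collapses the whole chain to equalities.

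The degenerate case~$|S| = 1$ needs no separate treatment. No bipartition splits a singleton, so every term vanishes and both functionals equal~$0$, consistent with~$\unitk{2}{S}$ being equivalent to a product tensor. I expect no real obstacle here. The only point requiring care is that~\cref{prop:E_upper_M_bound} genuinely needs laminarity, which holds by the hypothesis~$\theta \in \distsnc$. Beyond that, it only remains to confirm the orthonormality bookkeeping in the Schmidt decomposition when~$T \setminus S$ or~$\overline T \setminus S$ is non-empty; the~$e_1$ factors on those legs do not affect orthogonality.
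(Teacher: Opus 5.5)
Your proposal is correct and follows essentially the same route as the paper. The paper also sandwiches the functionals, using $E_\theta \leq E^\theta \leq M^\theta$ (from \cref{prop:E_upper_M_bound}) with flattening ranks $1$ or $2$ for the upper bound, and the marginal spectra $(1,0,\dotsc,0)$ or $(\tfrac12,\tfrac12,0,\dotsc,0)$ of $\unitk{2}{S}$ itself for the matching lower bound on $E_\theta$.
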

\begin{proof}
    Recall from~\cref{prop:E_upper_M_bound} that~$E_\theta(\psi) \leq E^\theta(\psi) \leq M^{\theta}(\psi)$ where~$M^{\theta}(\psi) = \sum_{b \in \supp \theta} \theta_b \log R_b(\psi)$.
    The flattening ranks of the unit tensor on~$S$ are given by
    \[
        R_{\{ T, \overline{T} \}}(\unitk{2}{S}) = \begin{cases}
            1 & \text{if }  S \subseteq T \text{ or } S \subseteq \overline{T}, \\
            2 & \text{otherwise.}
        \end{cases}
    \]
    This directly provides the right upper bound on~$E^\theta(\unitk{2}{S})$.
    Similarly, $E_\theta(\unitk{2}{S})$ admits the same lower bound, since the~$T$-marginal of~$\unitk{2}{S}$ has spectrum~$(1, 0, \dotsc, 0)$ if~$S \subseteq T$ or~$S \subseteq \overline{T}$, and spectrum~$(1/2, 1/2, 0, \dotsc, 0)$ otherwise, with entropy~$0$ or~$1$ respectively.
\end{proof}
\begin{remark}
    We note that~\cref{lem:ncfunc on subset units} can fail if~$\theta \not\in \distsnc$ (due to the failure of~\cref{prop:E_upper_M_bound}), see~\cref{sec:crossing}.
\end{remark}
\begin{corollary}
    \label{cor:ncfunc on all but one}
    For~$\theta \in \distsnc$, $E^\theta(\unitk{2}{[k] \setminus \{i\}}) = 1 - \theta_{\{\{i\}, [k] \setminus \{i\}\}}$.
\end{corollary}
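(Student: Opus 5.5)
This is a direct specialization of \cref{lem:ncfunc on subset units} with $S = [k]\setminus\{i\}$, so the plan is to evaluate the sum appearing there. First, \cref{lem:ncfunc on subset units} applies: $[k]\setminus\{i\}$ is non-empty and a proper subset of $[k]$ (since $k \geq 2$), and $\theta \in \distsnc$. It gives
\[
    E^\theta(\unitk{2}{[k]\setminus\{i\}}) = 1 - \sum_{\substack{b = \{T,\overline{T}\} \in \supp\theta \\ [k]\setminus\{i\} \subseteq T \,\lor\, [k]\setminus\{i\} \subseteq \overline{T}}} \theta_b .
\]

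The only step is to identify which bipartitions satisfy the condition under the sum. Suppose $[k]\setminus\{i\} \subseteq T$. Since $T$ is a proper subset of $[k]$, it must equal $[k]\setminus\{i\}$, so $\overline{T} = \{i\}$. By the same reasoning, $[k]\setminus\{i\} \subseteq \overline{T}$ forces $T = \{i\}$. In either case the unordered bipartition $\{T,\overline{T}\}$ is the singleton bipartition $\{\{i\},[k]\setminus\{i\}\}$. Conversely, that bipartition clearly satisfies the condition.

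So the sum collapses to the single weight $\theta_{\{\{i\},[k]\setminus\{i\}\}}$. If this bipartition is not in $\supp\theta$, its weight is $0$ and the formula still holds. This yields $1 - \theta_{\{\{i\},[k]\setminus\{i\}\}}$.

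There is no real obstacle here. The only care needed is to note that unordered bipartitions are counted once, so the two disjuncts do not double-count the same $b$.
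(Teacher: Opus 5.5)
Your proposal is correct and matches the paper, which states this corollary as an immediate specialization of \cref{lem:ncfunc on subset units} to $S = [k]\setminus\{i\}$ without further proof. Your observation that the singleton bipartition $\{\{i\},[k]\setminus\{i\}\}$ is the only one with a side containing $S$ is exactly the missing one-line justification.
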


We use the lemma and the corollary to show that the laminar quantum functionals are distinct functions over $\Tnc^k$:
\begin{lemma}[Recognition principle]
    \label{lem:recognition}
    The mapping $\distsnc^k\to\R^{2^k-2}$ defined by $\theta\mapsto\big(E^\theta(\unitk{2}{S})\big)_{\emptyset\neq S\subsetneq[k]}$ is injective.
\end{lemma}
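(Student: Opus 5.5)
We need to show that $\theta$ can be recovered from the values $f(S) \coloneq 1 - E^\theta(\unitk{2}{S})$. By \cref{lem:ncfunc on subset units}, for every $\emptyset\neq S\subsetneq[k]$,
\[
    f(S) = \sum_{\substack{b=\{T,\overline T\}\in\supp\theta\\ S\subseteq T \,\lor\, S\subseteq\overline T}} \theta_b .
\]
Since every bipartition has exactly one side not containing $k$, we work with $g(b) \coloneq \theta_b$ indexed by the side $T\not\ni k$. The plan is a Möbius-inversion-type recovery: show that $\theta$ is determined by $f$, first for the singleton-type coordinates and then inductively for larger bipartitions.

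\textbf{Step 1 (singletons).} By \cref{cor:ncfunc on all but one}, $f([k]\setminus\{i\}) = \theta_{\{\{i\},[k]\setminus\{i\}\}}$. This holds because the only bipartition with a side containing $[k]\setminus\{i\}$ is the singleton bipartition at $i$. So all singleton weights are read off directly.

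\textbf{Step 2 (induction on the size of the smaller side).} Suppose, more generally, that for a set $S$ we want to isolate $\theta_{\{\overline S, S\}}$, where $\overline S$ is the side of size $m$. The bipartitions counted in $f(S)$ are exactly those with a side $T\supseteq S$, i.e.\ with the complementary side $\overline T\subseteq\overline S$. Therefore
\[
    f(S) = \sum_{\emptyset\neq U\subseteq \overline S} \theta_{\{U,\overline U\}} ,
\]
which is a sum over nonempty subsets $U$ of $\overline S$ (with $U\neq[k]$ automatically). This is a downward-closed sum, so by Möbius inversion on the Boolean lattice,
\[
    \theta_{\{U,\overline U\}} = \sum_{\emptyset\neq W\subseteq U} (-1)^{|U|-|W|} f([k]\setminus W)
\]
for every proper nonempty $U$. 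Equivalently, one inducts on $|U|$: the weight $\theta_{\{U,\overline U\}}$ equals $f([k]\setminus U)$ minus the sum of the already-determined weights for proper nonempty subsets of $U$.

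Each bipartition $\{U,\overline U\}$ is recovered this way, using either side. The two choices give consistent answers because $f$ is a genuine function of $\theta$, so consistency need not be checked. Two distributions with the same image under the map therefore have identical weights on every bipartition, which proves injectivity.

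\textbf{Main obstacle.} The only delicate point is the formula for $f(S)$, namely that \cref{lem:ncfunc on subset units} applies. It requires $\theta\in\distsnc$, since it relies on \cref{prop:E_upper_M_bound}, and it must handle the $|S|=1$ case, where $\unitk{2}{S}$ is rank one and the formula gives $1-1=0$ correctly. With the formula in hand, the inversion above is purely combinatorial and needs no laminarity. Laminarity enters only through the validity of the evaluation formula.
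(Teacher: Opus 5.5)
Your proof is correct and follows essentially the same route as the paper's: read off the singleton weights via \cref{cor:ncfunc on all but one}, then recover each $\theta_{\{U,\overline U\}}$ from $E^\theta(\unitk{2}{[k]\setminus U})$ by subtracting the already-known weights of bipartitions with a side strictly inside $U$, inducting on $\abs{U}$. Your Möbius-inversion formula simply writes that induction in closed form, and you are right that laminarity enters only through \cref{lem:ncfunc on subset units}.
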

\begin{proof}
    First, one can learn the ``singleton weights'' on bipartitions~$\{\{i\}, [k] \setminus \{i\}\}$ using~\cref{cor:ncfunc on all but one}.
    For a bipartition~$\{\{i,j\}, [k] \setminus \{i,j\}\}$, one then has
    \[
        E^\theta(\unitk{2}{[k] \setminus \{i,j\}}) = 1 - \theta_{\{\{i,j\}, [k] \setminus \{i,j\}\}} - \theta_{\{\{i\}, [k] \setminus \{i\}\}} - \theta_{\{\{j\}, [k] \setminus \{j\}\}}
    \]
    and the latter two quantities are known.
    Similarly, assume one knows the weight of~$\theta$ on bipartitions where one side has cardinality~$\geq k-l+1$ with~$l \leq k/2$.
    Then for~$\emptyset \neq S \subsetneq [k]$ with cardinality~$k-l$, 
    if~$b = \{S, \overline{S}\}$, we have
    \[
        E^\theta(\unitk{2}{S}) = 1 - \theta_b - \sum_{\substack{b' = \{ T,  \overline{T} \} \\ S \subsetneq T}} \theta_{b'}  
    \]
    and the~$\theta_{b'}$ are all known, so one can learn~$\theta_b$.
\end{proof}

\subsection{Laminar functionals agree on \texorpdfstring{$\W_4$}{W4}}\label{sec:agree_on_W}
We will now show that the upper and lower quantum functionals which are parameterized by the same laminarly-supported distribution agree on $\W_4 \in \C^2 \ot \C^2 \ot \C^2 \ot \C^2$, defined by
\[
\W_4 \coloneq e_2 \otimes e_1 \otimes e_1 \otimes e_1 + e_1 \otimes e_2 \otimes e_1 \otimes e_1 + e_1 \otimes e_1 \otimes e_2 \otimes e_1 + e_1 \otimes e_1 \otimes e_1 \otimes e_2.
\]
\vspace{-2em}
\begin{theorem}\label{thm:W4-non-crossing-agree}
        For any $\theta=(\theta_{AB},\theta_A,\theta_B,\theta_C,\theta_D)$, $E^\theta(\W_4)= E_\theta(\W_4)$.
\end{theorem}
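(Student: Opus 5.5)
Since $E_\theta \leq E^\theta$ holds always, the task is the reverse inequality $E^\theta(\W_4)\leq E_\theta(\W_4)$. The laminar supports on four parties that contain $AB|CD$ consist of that bipartition together with the four singletons; every other laminar support is either singleton-only, and then \cref{thm:sing-qfunc-agree} applies, or has the same shape after relabeling the parties. By the $S_4$-symmetry of $\W_4$ it therefore suffices to treat $\theta=(\theta_{AB},\theta_A,\theta_B,\theta_C,\theta_D)$. I would reduce both functionals to a single concave optimization problem and show they give the same value.

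\textbf{Step 1 (upper bound on $E^\theta$).}
\begin{enumerate}
\item The flattening ranks of $\W_4$ are $\trank_s(\W_4)=2$ and $\trank_{AB}(\W_4)=3$; the $AB$-flattening is supported on $\mathrm{span}\{e_{11},e_{12},e_{21}\}$ in the $AB$-space.
\item Expand $\W_4^{\kron n}$ by how many $e_2$'s fall on each party. All terms have the form of a type class (weight-$n$ compositions), so nonvanishing of $\prod_b P^{V_b}_{\lambda^{(b)}}\W_4^{\kron n}$ forces the $\lambda^{(b)}$ to be dominated in a controlled way by the types that occur.
\item Concretely: a term in which party $s$ carries $e_2$ in $m_s$ of the $n$ copies, with $\sum_s m_s=n$, has singleton type $(n-m_s,m_s)$. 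Since $P_\lambda$ vanishes unless $\lambda$ majorizes the weight, this gives $\lambda^{(s)}\succeq$ the sorted type. The $AB$-flattening has weights $(n-m_A-m_B,m_A,m_B)$, so $\lambda^{(AB)}$ majorizes that sorted vector.
\item Entropy is Schur-concave, so $E^\theta(\W_4)$ is at most
\[ \max_{p\in\Delta_3}\Big[\theta_{AB}H(1-p_A-p_B,p_A,p_B)+\sum_s\theta_s h(p_s)\Big], \]
where $\Delta_3$ is the simplex of $(p_A,p_B,p_C,p_D)$ with $\sum_s p_s=1$ and $h$ is binary entropy.
\item The delicate point is that different terms have different types. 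Any fixed collection of isotypic projectors is nonzero only if some single type class survives in a common highest-weight argument. I would handle this with the standard Borel/highest-weight argument: the nonzero component is generated by a weight vector of the $\W$-support, so all the $\lambda^{(b)}$ must majorize weights coming from one common $p$.
\end{enumerate}

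\textbf{Step 2 (lower bound on $E_\theta$ matching it).} For a maximizer $p$ of the concave objective, I would exhibit tensors in $\overline{\GL\cdot\W_4}$ realizing it. Rescaling gives $(\mathrm{diag}(1,\sqrt{p_s}))_s\cdot\W_4=\sum_s\sqrt{p_s}\,e_2^{(s)}$, which is normalized. Its marginals are:
\begin{enumerate}
\item the $s$-marginal is $\mathrm{diag}(1-p_s,p_s)$;
\item the $AB$-marginal is $\mathrm{diag}(1-p_A-p_B,p_A,p_B)$ on $e_{11},e_{21},e_{12}$, with no off-diagonal coupling, because the $CD$ parts $e_{11}$, $e_{11}$, $e_{11}$ versus $e_{21}/e_{12}$ are handled correctly.
\end{enumerate}
I should double-check the last point. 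In fact the terms $e_2^{(A)}$ and $e_2^{(B)}$ share the $CD$-state $e_{11}$, and the $CD$-excited terms share the $AB$-state $e_{11}$. The $AB$-marginal is then the state $\sqrt{p_A}e_{21}+\sqrt{p_B}e_{12}$ plus a term of weight $p_C+p_D$ on $e_{11}$, so its spectrum is $(p_A+p_B,\,p_C+p_D)$, not three-valued.

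\textbf{Main obstacle.} The mismatch just found means that either the upper bound in Step 1 must be sharpened or the lower bound must exploit non-diagonal $\GL$ elements. I would reexamine Step 1: within the $AB$-flattening, copies where $A$ or $B$ is excited share the same $CD$ partner. Hence the rank-$3$ direction is only partially accessible, and the true bound should read $\theta_{AB}h(p_A+p_B)$ replaced by a refined expression. The cleanest route seems to be showing $E^\theta(\W_4)\le\max_p\big[\theta_{AB}h(p_A+p_B)+\sum_s\theta_s h(p_s)\big]$. The key claim is that $\lambda^{(AB)}$ effectively has at most two long rows, because $\W_4^{\kron n}$ restricted by the singleton projectors lies in a subspace where the $AB$-flattening factors through a $2$-row structure up to $o(n)$ corrections. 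This is the step I expect to be hardest. I would attack it via the grouping $AB|C|D$, which turns $\W_4$ into a $3$-tensor degeneration of $\W_3$-type, combined with \cref{thm:sing-qfunc-agree} applied to that grouping. Matching then follows from the diagonal scaling above and concavity.
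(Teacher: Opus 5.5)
\textbf{There is a genuine gap: the upper bound on $E^\theta(\W_4)$, which is the whole content of the theorem, is never proved.}

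Your corrected Step 2 is right. $\sum_s\sqrt{p_s}\,e_2^{(s)}$ has $s$-marginals $\mathrm{diag}(1-p_s,p_s)$ and $AB$-spectrum $(p_A+p_B,p_C+p_D)$, which is exactly the realizing family the paper uses on the lower side. The problem is the inequality everything hinges on, $E^\theta(\W_4)\le\max_{p}\bigl[\theta_{AB}h(p_A+p_B)+\sum_s\theta_s h(p_s)\bigr]$, which you state but do not prove. The ``key claim'' you single out for it is not the right one.
\begin{itemize}
\item $\trank_{AB}(\W_4)=2$, not $3$, because $\W_4=(e_2\ot e_1+e_1\ot e_2)\ot e_1\ot e_1+e_1\ot e_1\ot(e_2\ot e_1+e_1\ot e_2)$. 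So $\lambda^{(AB)}$ has at most two rows exactly, with no $o(n)$ corrections. Your Step~1 bound involving $H(1-p_A-p_B,p_A,p_B)$ is valid but can never match: for $\theta_{AB}=1$ it gives $\log 3>1$.
\item Two-rowness alone is still too weak. Even combined with your singleton bounds it only yields $h(z)\le1$. For $\theta_{AB}=\theta_A=\theta_B=\tfrac13$ this gives $\tfrac13+\tfrac23=1$ (at $p_A=p_B=\tfrac12$), whereas the true value is $h(\tfrac13)\approx0.918$.
\item What is needed is that $z$ is controlled by the \emph{same} $p$ that controls the singleton rows, namely $z\le\min(p_A+p_B,p_C+p_D)$.
\item Grouping $AB|C|D$ and applying \cref{thm:sing-qfunc-agree} constrains $(z,\gamma,\delta)$ only by itself, at best giving $\gamma+\delta\le1-z$. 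The mirror grouping constrains $(\alpha,\beta,z)$ only by itself. The tuple $(\tfrac12,\tfrac12,\tfrac12,\tfrac12,0)$ satisfies both constraints yet is infeasible for $\W_4$.
\item Even adding the common $p$ from your Step~1(v), you still need an argument that these separately derived constraints force the bound. ``Concavity'' does not supply that coupling.
\end{itemize}

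\textbf{How the paper closes it.} \cref{cl:ineqs-W4-upper-quantum-func} shows that every feasible tuple satisfies $\alpha,\beta,\gamma,\delta,z\le\tfrac12$, the two $\W_3$-polytope inequalities $\alpha+\beta\le1-z$ and $\gamma+\delta\le1-z$, and the genuinely four-party inequality $\alpha+\beta+\gamma+\delta\le1$ from the entanglement polytope of $\W_4$. An explicit exchange step then raises $\alpha$ and $\gamma$ toward $\tfrac12$ until the four rows sum to $1$. This preserves the inequalities and decreases no entropy. Afterwards $z\le\min(\alpha'+\beta',\gamma'+\delta')\le\tfrac12$, so $p=(\alpha',\beta',\gamma',\delta')$ is a realizable point that dominates the original tuple.

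\textbf{A more direct fix within your framework.} Your Step~1(v) can also be finished directly, and more elementarily.
\begin{itemize}
\item All the projectors commute with the local torus $T_A\times T_B\times T_C\times T_D$. So nonvanishing already occurs on a single weight component $w_m$ of $\W_4^{\ot n}$, with $\sum_s m_s=n$. This gives $\lambda^{(s)}_2\le\min(m_s,n-m_s)$.
\item $w_m$ is symmetric. It is a torus weight vector for $\GL(V_{AB})$ of weight $(m_C+m_D,m_B,m_A,0)$ in the basis $e_1\ot e_1,e_1\ot e_2,e_2\ot e_1,e_2\ot e_2$. It is also one for $\GL(V_{CD})$, of weight $(m_A+m_B,m_D,m_C,0)$.
\item By \cref{lem:complementaryProjectionsOnSymmetric}, the $AB$-projector acts on $w_m$ as the $CD$-projector. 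Since every weight of $\mathbb{S}_\mu$ is dominated by $\mu$, this gives $\lambda^{(AB)}_1\ge\max(m_A+m_B,m_C+m_D)$.
\item Combined with two-rowness, this is exactly $z\le\min(p_A+p_B,p_C+p_D)$ for $p=m/n$, and your Step~2 then matches.
\end{itemize}
Neither argument appears in your proposal, so as written the proof is incomplete.
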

By symmetry of~$\W_4$, we deduce that the above holds for any~$\theta \in \distsnc^4$, and hence:
\begin{corollary}
    $\W_4 \in \Tnc^4$.
\end{corollary}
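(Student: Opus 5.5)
The plan is to sandwich both functionals between one explicit quantity. Write $\Phi = \{\mathrm{A},\mathrm{B},\mathrm{C},\mathrm{D}\}$ for the support of $\W_4$: the four basis vectors with a single $e_2$, indexed by the party carrying it. For a probability distribution $P$ on $\Phi$ and a bipartition $b = \{S,\overline S\}$, let $P_b$ be the pushforward of $P$ under the map sending an excitation at party $x$ to "$x$ if $x \in S$, else $\ast$". Concretely, $P_b = (P_x)_{x\in S} \cup (\sum_{x\notin S}P_x)$. Set
\[
  G(\theta) \coloneq \max_{P \in \dists(\Phi)} \sum_{b \in \supp\theta} \theta_b H(P_b),
\]
with $\theta = (\theta_{AB},\theta_A,\theta_B,\theta_C,\theta_D)$. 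I will show $E_\theta(\W_4) \geq G(\theta) \geq E^\theta(\W_4)$. Together with $E_\theta \leq E^\theta$ this gives the theorem.

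\textbf{Lower bound.} This is the easy step. Acting on $\W_4$ with $\mathrm{diag}(1,\sqrt{P_x}) \in \GL_2$ on each party $x$ (using limits if some $P_x = 0$) produces
\[
  \varphi_P = \sum_x \sqrt{P_x}\, (\text{excitation at } x) \in \overline{\GL\cdot\W_4}.
\]
The single-party marginals of $\varphi_P$ are $\mathrm{diag}(1-P_x, P_x)$. For the $AB|CD$ cut we write
\[
  \varphi_P = e_{11}^{AB} \ot \parens*{\sqrt{P_C}\, e_{21} + \sqrt{P_D}\, e_{12}}^{CD} + \sqrt{P_A}\, e_{21}^{AB} \ot e_{11}^{CD} + \sqrt{P_B}\, e_{12}^{AB} \ot e_{11}^{CD}.
\]
The three $CD$-vectors appearing here are mutually orthogonal, so $\rho_{AB}$ is diagonal with spectrum $(P_C+P_D, P_A, P_B)$. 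Hence $H_\theta(\varphi_P) = \sum_b \theta_b H(P_b)$, and \cref{lem:lower qfunc proj orbit closure} gives $E_\theta(\W_4) \geq G(\theta)$.

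\textbf{Upper bound.} This is the main obstacle, and I would mimic Strassen's upper support functional argument. First I need the fact that, for a basis of $V_S$ consisting of weight vectors for a torus of $\GL(V_S)$, if $P^{V_S}_\lambda w \neq 0$ then $\lambda$ dominates the weight (type) of some basis tensor in the expansion of $w$. Since the entropy of a partition is Schur-concave, $\lambda$ dominating $\mu$ gives $H(\lambda/n) \leq H(\mu/n)$. For the singleton cuts I use the local basis $\{e_1,e_2\}$. For the $AB$ cut I use the product basis of $V_A \ot V_B$, which is compatible with the torus of $\GL(V_{AB})$. I would then handle the different cuts jointly: because the supports are laminar, the projectors commute (\cref{lem:nc projectors commute}), so I can peel them off one at a time. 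The aim is to show that a nonzero product $\prod_b P^{V_b}_{\lambda^{(b)}} \W_4^{\ot n}$ yields a single type $P$ on $\Phi$ that is simultaneously dominated for every cut.

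This simultaneity is exactly where I expect trouble. For a single fixed basis, the naive argument only gives a bound of the form $\max$ over types of each cut separately. To get a single $P$ I would, following Strassen and CVZ, first minimise over basis changes $g \in \SL_2^{\times 4}$ (upper functionals are invariant under these) to obtain a min--max expression. I would then use that $\Phi$ is a free support, since any two elements differ in exactly two coordinates, and invoke a minimax theorem to swap min and max, as in the proof that free tensors are support-robust. If the minimax step resists, my fallback is direct: exhibit, for each $\theta$, the optimal $P^\ast$ with its KKT conditions, and use the torus rescaling $\mathrm{diag}(1,t_x)$ with $t_x \to$ suitable values as the basis choice. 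The goal would be to show that every other type is penalised, so that the Schur-concavity bound collapses to $\sum_b \theta_b H(P^\ast_b)$. Finally, the symmetry of $\W_4$ under $S_4$ extends the result from supports of the form $\{AB, A, B, C, D\}$ to all of $\distsnc^4$, as stated after the theorem.
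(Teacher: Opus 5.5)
Your proposal has a genuine gap in each half: the lower-bound step, which you treat as easy, contains a computational error that makes the target $G(\theta)$ wrong, and the upper bound, where the real content lies, is left unsupplied by a route that cannot reach the correct value.

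\textbf{The lower bound.} In $\varphi_P$ the $AB$-vectors $e_{21}$ and $e_{12}$ are both paired with the same $CD$-vector $e_{11}$, so the three $CD$-vectors are not mutually orthogonal. The Schmidt decomposition is
\[
  \varphi_P = \bigl(\sqrt{P_A}\, e_{21} + \sqrt{P_B}\, e_{12}\bigr)^{AB} \ot e_{11}^{CD} + e_{11}^{AB} \ot \bigl(\sqrt{P_C}\, e_{21} + \sqrt{P_D}\, e_{12}\bigr)^{CD}.
\]
So $\rho_{AB}$ has spectrum $(P_A+P_B,\,P_C+P_D)$, as it must since $\trank_{AB|CD}(\W_4)=2$. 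The $AB$-term of $H_\theta(\varphi_P)$ is therefore the binary entropy $h(P_A+P_B)$, not $H(P_A,P_B,P_C+P_D)$. Your $P_b$ is a classical marginal: it even depends on which side of $b$ you call $S$, whereas quantum marginal entropies of complementary sides coincide. Hence $G(\theta)$ overshoots, and $E_\theta(\W_4)\geq G(\theta)$ is false. For $\theta_{AB}=1$ you get $G(\theta)=\log 3$, while $E_\theta(\W_4)\leq E^\theta(\W_4)\leq \log\trank_{AB|CD}(\W_4)=1$ by \cref{prop:E_upper_M_bound}. Your family $\varphi_P$ is the right one (it is the one the paper uses), but the target must be $\max_P\,[\theta_{AB}h(P_A+P_B)+\sum_x\theta_x h(P_x)]$.

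\textbf{The upper bound.} The sketch you give cannot reach even this corrected target. Type dominance in a product basis gives bounds cut by cut, but a common type cannot be extracted. The projector $P^{V_A}_\lambda$ permutes only $A$-factors and so does not preserve $AB$-types; for instance it turns $e_{21}\ot e_{12}\in V_{AB}^{\ot 2}$ into $e_{11}\ot e_{22}$. The freeness of $\Phi$ concerns the four single coordinates. It fails for the grouped cut $AB|CD$, where two support elements share the $CD$-part $e_{11}$; this is exactly the coherence that makes $\rho_{AB}$ non-diagonal. Any bound assembled from classical types in a product basis therefore controls the $AB$-term only through a classical marginal entropy such as $H(P_A,P_B,P_C+P_D)\geq h(P_A+P_B)$. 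No minimax or KKT refinement makes it tight: again $\theta_{AB}=1$ yields $\log 3$ against the true value $1$.

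\textbf{What is missing.} You need a quantum, simultaneous constraint on the Young diagrams. The paper (\cref{cl:ineqs-W4-upper-quantum-func}) shows that the normalized second rows $(\alpha,\beta,\gamma,\delta,z)$ of any nonvanishing product satisfy three families of constraints:
\begin{enumerate}
\item all are at most $\tfrac12$;
\item $\alpha+\beta\leq 1-z$ and $\gamma+\delta\leq 1-z$, from the entanglement polytope of $\W_3$ after merging $CD$ (resp.\ $AB$) into one party, using that the projectors commute so sub-products are also nonvanishing;
\item $\alpha+\beta+\gamma+\delta\leq 1$, from the moment polytope of $\W_4$.
\end{enumerate}
It then shows by an explicit adjustment that every such point is dominated, in $\theta$-weighted binary entropy, by a point $(P_A,P_B,P_C,P_D,\min\{P_A+P_B,P_C+P_D\})$ realized by some $\varphi_P$ with $\sum_x P_x=1$.
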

\begin{remark}
    Note that~$\W_4$ is not in the semiring generated by embedded~$3$-tensors, $\T_3^4$.
    First, observe that anything in~$\T_3^4$ is a direct sum of Kronecker products of embedded $3$-tensors, using distributivity. 
    If $\W_4$ is a direct sum of non-trivial 4-tensors, it can only be the direct sum of two~$1 \times 1 \times 1 \times 1$ tensors. Such a construction produces a semistable tensor (in fact, it is equivalent to~$\unitk{2}{4}$), but~$\W_4$ is unstable, hence not a direct sum of~$1 \times 1 \times 1 \times 1$ tensors.
    The only remaining option is for $\W_4$ to be a Kronecker product of two tensors. By multiplicativity of flattening ranks, the only option therefore is for $\W_4$ to be the Kronecker product of tensors of format~$2 \times 2 \times 1 \times 1$ and~$1 \times 1 \times 2 \times 2$ (up to permutation of the parties). 
    However, this would imply flattening rank of~$1$ across this bipartition, whereas all flattening ranks of~$\W_4$ are~$2$.
\end{remark}
To prove \cref{thm:W4-non-crossing-agree} we will use the following claim:
\begin{claim}\label{cl:ineqs-W4-upper-quantum-func}
    Let $\alpha,\beta,\gamma,\delta,z$ be the normalized second-rows of integer partitions of $n$ corresponding to $A,B,C,D, AB$ respectively, such that the composition of their Young projectors does not vanish on $\W_4$. Then
    \begin{enumerate}
        \item\label{eq:trivialBoundsSecondRows} $0 \leq \alpha, \beta, \gamma, \delta, z \leq \frac12$.
        \item\label{eq:triangle-inequality} $\abs{\alpha-\beta} \leq z$, and $\abs{\gamma-\delta} \leq z$.
        \item\label{eq:firstRowBoundingSecondRows} $\alpha + \beta \leq 1-z$, and $\gamma+\delta \leq 1-z$.
        \item\label{eq:W4-moment-polytope} $\alpha + \beta + \gamma + \delta \leq 1$.
    \end{enumerate}
\end{claim}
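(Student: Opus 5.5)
The plan is to turn nonvanishing of the commuting Young projectors into spectral constraints, and then read off each inequality from a suitable grouping of the four parties.

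First I would use commutativity (\cref{lem:nc projectors commute}). If $P^{AB}_{\lambda^{AB}}P^A_{\lambda^A}P^B_{\lambda^B}P^C_{\lambda^C}P^D_{\lambda^D}\W_4^{\kron n}\neq 0$, then every sub-product also fails to vanish on $\W_4^{\kron n}$. Each inequality therefore only needs the projectors it mentions.

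\textbf{Step (i).} The $A,B,C,D$ spaces are $\C^2$, so $\lambda^A,\dots,\lambda^D$ have at most two rows. The $AB$-flattening of $\W_4$ has rank $2$: its image is $\mathrm{span}\{e_1\ot e_1,\ e_1\ot e_2+e_2\ot e_1\}$. Hence, exactly as in the proof of \cref{prop:E_upper_M_bound}, $\lambda^{AB}$ has at most two rows. A second row of a two-row partition of $n$ is at most $n/2$, which gives (i).

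\textbf{Steps (iii) and (iv).} I would use the entanglement/moment polytope characterization underlying \cref{thm:sing-qfunc-agree} (Klyachko, Franz, Walter et al.): nonvanishing of singleton Young projectors on $\psi^{\kron n}$ forces $(\lambda^{(i)}/n)_i$ to lie in the moment polytope of $\overline{G\cdot\psi}$.
\begin{itemize}
\item For (iv), apply this to $\W_4$ with the four singleton projectors. The $\W_k$ polytope is described by the inequality that the sum of the smaller local eigenvalues is at most $1$, so $\alpha+\beta+\gamma+\delta\le 1$. I would verify this by the standard argument for $\W$-type states, namely the Higuchi-type bound obtained from the unique highest-weight structure of the orbit closure.
\item For (iii), regroup $\W_4$ as the $3$-tensor on $(AB)\,|\,C\,|\,D$, restricted to the rank-$2$ image of its $AB$-flattening. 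Under the isomorphism $e_1\ot e_1\mapsto f_1$ and $(e_1\ot e_2+e_2\ot e_1)\mapsto f_2$, this is exactly $\W_3$. The projector $P^{AB}_{\lambda^{AB}}$ is then a singleton projector for this grouping.
\item The $\W_3$ polytope gives $z+\gamma+\delta\le 1$. Regrouping as $A\,|\,B\,|\,(CD)$ and using that the $AB$ and $CD$ marginals are isospectral gives $\alpha+\beta+z\le 1$.
\end{itemize}

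\textbf{Step (ii).} This concerns only $P^{AB}_{\lambda^{AB}}P^A_{\lambda^A}P^B_{\lambda^B}$. Its nonvanishing implies that the Kronecker coefficient $g_{\lambda^A,\lambda^B,\lambda^{AB}}$ is positive, because $\mathbb S_{\lambda^A}\ot\mathbb S_{\lambda^B}$ must occur in $\mathrm{Res}\,\mathbb S_{\lambda^{AB}}(\C^4)$. Equivalently (by the asymptotic Klyachko correspondence) the normalized triple is the spectral data of a two-qubit state $\rho_{AB}$ with eigenvalues $(1-z,z,0,0)$ and local minimal eigenvalues $\alpha,\beta$. Bravyi's two-qubit marginal inequalities include $|\alpha-\beta|\le\min(p_1-p_3,p_2-p_4)$, which here equals $z$. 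The same argument for $C,D$ uses $\lambda^{CD}=\lambda^{AB}$, which holds because the $AB|CD$ projector is defined symmetrically via $P^{ABCD}_{(n)}$.

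\textbf{Main obstacle.} I expect the hardest part to be making the passage from finite-$n$ projector nonvanishing to polytope membership rigorous for the grouped tensors in step (iii). One must check that the identification of the $AB$ image with $\C^2$ intertwines $P^{AB}_{\lambda^{AB}}$ (a $\GL_4$-isotypic projector) with the $\GL_2$-isotypic projector for the regrouped $\W_3$. I would argue that $\W_4^{\kron n}$ lies in $(\mathrm{im})^{\ot n}\ot V_{CD}^{\ot n}$ and that $\GL_4$-isotypic components restrict to the corresponding $\GL_2$-isotypic components on this subspace. If that step resists, a fallback is to derive (iii) from (ii) together with the rank-$2$ constraint via Bravyi's remaining inequalities.
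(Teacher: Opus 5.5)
Your proof is correct. For (i), (iii) and (iv) it follows the paper. The two-row bounds come from the local dimensions and the rank-$2$ $AB$-flattening. For (iii) you use the $\W_3$ entanglement polytope after regrouping as $(AB)|C|D$ and $A|B|(CD)$, moving $\lambda^{AB}$ to $CD$ via \cref{lem:complementaryProjectionsOnSymmetric}. For (iv) you cite the $\W_4$ polytope (Ness, Walter et al.).

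The genuine difference is in (ii).

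\textbf{The paper's route.} It stays at the level of partitions. Positivity of $g_{\lambda^A\lambda^B\lambda^{AB}}$ together with the Klemm--Dvir--Clausen--Meier bound $\nu_1\le|\lambda\cap\mu|$ (\cref{thm:klemmDvirClausenMaier}) gives $1-z\le 1-|\alpha-\beta|$ in one line.

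\textbf{Your route.} You turn positivity of the Kronecker coefficient into an actual two-qubit state $\rho_{AB}$ with spectrum $(1-z,z,0,0)$ and local minimal eigenvalues $\alpha,\beta$. This is the Christandl--Mitchison direction of the correspondence, so it is an implication, not an equivalence. You then apply Bravyi's inequality $|\alpha-\beta|\le\min(p_1-p_3,p_2-p_4)=z$.

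\textbf{What each buys.} Your route needs two heavier inputs: the state-existence half of the Kronecker/marginal correspondence, and the full two-qubit marginal solution. The paper needs only a single combinatorial lemma. In exchange, your route places (ii) among the polygonal inequalities mentioned in the paper's closing remark, and it gives the remaining two-qubit constraints (e.g.\ $\alpha+\beta\ge z$) for free.

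\textbf{Side comment on the obstacle in (iii).} It is not a real obstacle. The isotypic projectors are defined through the $S_n$-action (\cref{eq:proj_char_form}). Let $W\subseteq\C^2\ot\C^2$ be the image of the $AB$-flattening. The $S_n$-action preserves $W^{\ot n}$ and commutes with $\phi^{\ot n}$ for any isomorphism $\phi\colon W\to\C^2$. So no $\GL_4\to\GL_2$ branching argument is needed.

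\textbf{Side comment on your fallback.} It would fail. Bravyi's inequalities for spectrum $(1-z,z,0,0)$ allow $\alpha=\beta=z=\tfrac12$, realized by $\rho_{AB}=\tfrac12(e_1e_1^*\ot e_1e_1^*+e_2e_2^*\ot e_2e_2^*)$. This violates $\alpha+\beta\le 1-z$. That inequality is genuinely a $\W$-class constraint, as the paper's remark stresses.
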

\begin{proof}[Proof of \Cref{thm:W4-non-crossing-agree}]
        Any distribution $\theta$ has $E^\theta\geq E_\theta$ \cite[Theorem 3.24 (in the arXiv version)]{cvz23universal}. Here we prove the inequality $E^\theta(\W_4)\leq E_\theta(\W_4)$.
        
        Define $H_\theta(\alpha,\beta,\gamma,\delta,z)\coloneq\theta_{AB}h(z)+\theta_Ah(\alpha)+\theta_Bh(\beta)+\theta_Ch(\gamma)+\theta_Dh(\delta)$ the weighted average of the entropies for the input, according to $\theta$. The entropies $h$ are binary entropies, as we deal with Schmidt-rank $2$ for every bipartition (e.g. for $\W_4$ itself see the proof of \cref{cl:ineqs-W4-upper-quantum-func}). This is the optimization function by which both $E_\theta$ and $E^\theta$ are defined, over their respective feasible, or allowed, $(\alpha,\beta,\gamma,\delta,z)$'s.
        
        Let $x\coloneq(\alpha,\beta,\gamma,\delta,z)$ be the normalized second-rows of partitions for which $P^{AB|CD}_zP^{A|B|C|D}_{\alpha,\beta,\gamma,\delta}\W_4^{\ot n}\neq0$, i.e. a feasible point for $E^\theta(\W_4)$.
        We will show there is another feasible point for $E_\theta(\W_4)$, $x'\coloneq(\alpha',\beta',\gamma',\delta',z')$, which satisfies $H_\theta(x')\geq H_\theta(x)$. 
        Note $x'$ is feasible if it is the vector of second-eigenvalues of the corresponding reduced density matrices of a normalized state that was SLOCC-converted (i.e. restricted) from $\W_4$. For any $\alpha',\beta',\gamma',\delta'$ which sum to $1$ we have the following such state $A_1\otimes A_2\otimes A_3\otimes A_4 (\W_4)=\sqrt{\alpha'}e_2\otimes e_1\otimes e_1\otimes e_1+\sqrt{\beta'}e_1\otimes e_2\otimes e_1\otimes e_1+\sqrt{\gamma'}e_1\otimes e_1\otimes e_2\otimes e_1+\sqrt{\delta'}e_1\otimes e_1\otimes e_1\otimes e_2$ (e.g. $A_1=\begin{pmatrix}
            1&0\\0&\sqrt{\alpha'}
        \end{pmatrix}$). We will show we can define these, together with $z'$, in a consistent manner (either $z'=\alpha'+\beta'$ or $z'=\gamma'+\delta'$, by $h(z')=h(1-z')$) which also achieves a weighted entropy at least as large as that of $x$, which will finish the proof.

        By \cref{cl:ineqs-W4-upper-quantum-func} we know $0\leq\alpha,\beta,\gamma,\delta,z\leq\tfrac12$, $\alpha+\beta,\gamma+\delta\leq1-z$, and $\sigma\coloneq\alpha+\beta+\gamma+\delta\leq1$.
        Split between the equality and the strict inequality cases, to define $x'$: 
        
        If $\sigma=1$, we keep $\alpha'\coloneq\alpha,\ \beta'\coloneq\beta,\ \gamma'\coloneq\gamma,\ \delta'\coloneq\delta$ and only (possibly) change $z$. $1=(\alpha+\beta)+(\gamma+\delta)\leq1-z+(\gamma+\delta)$ gives $z\leq\gamma+\delta$, and similarly $z\leq\alpha+\beta$. Define $z'\coloneq\min\{\alpha+\beta,\gamma+\delta\}$, as this is the marginal entropy's second value for the state defined by $\alpha,\beta,\gamma,\delta$. As both possible values are at least as large as $z$, $z\leq z'$. Together the two pairs add up to $1$, making $z\leq z'\leq\tfrac12$ which implies $h(z')\geq h(z)$. The other entropies didn't change so we're done. 
        
        If, however, there is a strict inequality $\sigma<1$, we will define $x'$ that will "correct" this into an equality, without decreasing the entropy. First we will correct the sum by adding to $\alpha,\gamma$, making the smaller pair larger, and if need be enlarging the larger pair as well. The strict inequality implies $\min\{\alpha+\beta,\gamma+\delta\}<\tfrac12$. Assume $\gamma+\delta\leq\alpha+\beta$ (otherwise swap between them in the following) and define $\gamma'\coloneq\gamma+\min\{\tfrac12-\gamma-\delta,1-\sigma\}$ and $\alpha'\coloneq\alpha+\max\{0,\tfrac12-\alpha-\beta\}$. 
        Note that both $\gamma'$ and $\alpha'$ are closer to $\tfrac12$ than $\gamma$ and $\alpha$, so $h(\gamma')\geq h(\gamma)$ and $h(\alpha')\geq h(\alpha)$. We made $\sigma'\coloneq\alpha'+\beta+\gamma'+\delta=1$ while keeping $\alpha',\beta,\gamma',\delta\leq\tfrac12$ and $1-z\geq\alpha'+\beta\geq\gamma'+\delta$ ($\alpha'>\alpha$ only if $\alpha+\beta<\tfrac12$ and in this case $\alpha'+\beta=\tfrac12$). Thus, $(\alpha',\beta,\gamma',\delta,z)$ satisfy the inequalities used for the $\sigma=1$ case which we can repeat now: $1=(\alpha'+\beta)+(\gamma'+\delta)\leq1-z+(\gamma'+\delta)$ implying $z\leq\gamma'+\delta\leq\alpha'+\beta$. Define $z'\coloneq\gamma'+\delta$. We have $z\leq\gamma'+\delta\leq \tfrac12$ meaning $h(z')\geq h(z)$. Set $\beta'\coloneq\beta,\ \delta'\coloneq\delta$ and we're done. 
        
        In both cases we have $H_\theta(x)\leq H_\theta(x')$ while $x'$ is a feasible vector for $E_\theta(\W_4)$
        as required.
\end{proof}

To establish~\cref{cl:ineqs-W4-upper-quantum-func} we use the following:
\begin{theorem}[Thm 1.5(b) in \cite{klemm77}, Thm 1.6(a) in \cite{DVIR1993125}, \cite{clausen1993}]\label{thm:klemmDvirClausenMaier}
        For all partitions $\mu,\nu,\lambda$ with $g_{\mu\nu\lambda}\neq0$, $\nu_1\leq|\lambda\cap\mu|=\sum_i\min\{\lambda_i,\mu_i\}$, and there is such a triple with equality.
    \end{theorem}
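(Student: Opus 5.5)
The approach is to read $g_{\mu\nu\lambda}$ as the multiplicity of the irreducible $S_n$-representation $[\nu]$ in $[\lambda]\ot[\mu]$, and to convert the condition on $\nu_1$ into the existence of invariant vectors under a Young subgroup. The bound then comes from comparing the restrictions of $[\lambda]$ and $[\mu]$ to a smaller symmetric group.

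\textbf{Step 1: characterize $\nu_1$ by branching.} Let $a\leq n$ and $S_a\subseteq S_n$. By the branching rule (Pieri's rule applied to $\operatorname{Res}^{S_n}_{S_a\times S_{n-a}}$), $\operatorname{Res}_{S_a}[\nu]$ contains the trivial representation if and only if $\nu$ contains a horizontal strip of size $a$. This holds exactly when $a\leq\nu_1$. In particular, $[\nu]$ has a nonzero $S_{\nu_1}$-invariant vector.

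\textbf{Step 2: transfer the invariant to $[\lambda]\ot[\mu]$.} Set $a=\nu_1$. If $g_{\mu\nu\lambda}\neq 0$, then $[\nu]\subseteq[\lambda]\ot[\mu]$, so $[\lambda]\ot[\mu]$ also has a nonzero $S_a$-invariant vector. Decompose $\operatorname{Res}_{S_a}[\lambda]=\bigoplus_{\alpha\vdash a,\,\alpha\subseteq\lambda} m_\alpha[\alpha]$ and similarly $\operatorname{Res}_{S_a}[\mu]=\bigoplus_{\beta\vdash a,\,\beta\subseteq\mu} m'_\beta[\beta]$. The multiplicities are positive exactly for diagrams contained in $\lambda$, respectively $\mu$, by iterated branching. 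Since every $[\alpha]$ is self-dual, the dimension of the $S_a$-invariants of $[\alpha]\ot[\beta]$ is $\delta_{\alpha\beta}$. Hence there must be some $\alpha\vdash a$ with $\alpha\subseteq\lambda$ and $\alpha\subseteq\mu$. This means $\alpha\subseteq\lambda\cap\mu$ (rowwise minimum), so
\[
\nu_1=a=\abs{\alpha}\leq\abs{\lambda\cap\mu}=\sum_i\min\{\lambda_i,\mu_i\}.
\]

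\textbf{Step 3: tightness.} For the existence of a triple attaining equality, take $\lambda=\mu$ and $\nu=(n)$. Then $g_{\lambda,(n),\lambda}=\dim\operatorname{Hom}_{S_n}([\lambda],[\lambda])=1\neq 0$ and $\nu_1=n=\abs{\lambda\cap\lambda}$.

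If one instead wants equality for every pair $(\lambda,\mu)$, I would try to show that the $S_a$-invariant line in $[\lambda\cap\mu]$-isotypic parts generates some $[\nu]$ whose restriction contains the trivial representation of $S_a$ but not of $S_{a+1}$, giving $\nu_1=\abs{\lambda\cap\mu}$. This refined version is where I expect the real difficulty; it is the content of the cited results of Klemm, Dvir and Clausen--Meier. For the statement as used here, Step 3 suffices, and the main care goes into Step 1, namely checking the horizontal-strip criterion for trivial constituents.
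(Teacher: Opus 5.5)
The paper gives no proof of this statement; it only cites Klemm, Dvir and Clausen--Meier. The only part used later, in the proof of \cref{cl:ineqs-W4-upper-quantum-func}, is the inequality $\nu_1\leq|\lambda\cap\mu|$. Your Steps~1--2 are a correct, self-contained proof of that inequality by the standard Frobenius-reciprocity/branching argument. Step~1 shows $[\nu]^{S_a}\neq 0$ iff $a\leq\nu_1$. Branching plus self-duality gives $\dim([\lambda]\otimes[\mu])^{S_a}=\sum_{\beta\vdash a}m_\beta m'_\beta$, which vanishes unless some $\beta\vdash a$ lies in $\lambda\cap\mu$. So nothing is missing for the paper's purposes.

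Where you fall short is the equality clause. Step~3 only establishes the literal, trivial reading, namely that some triple (e.g.\ $\lambda=\mu$, $\nu=(n)$) attains equality. The cited results assert sharpness for \emph{every} pair: $\max\{\nu_1 : g_{\mu\nu\lambda}\neq 0\}=|\lambda\cap\mu|$ for all $\lambda,\mu\vdash n$. You leave this open and expect it to be the hard part. In fact it follows at once from your own two steps, because both are equivalences.

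Put $a=|\lambda\cap\mu|$. Since $\lambda\cap\mu$ is itself a partition of $a$ contained in both $\lambda$ and $\mu$, your formula gives $\dim([\lambda]\otimes[\mu])^{S_a}\geq m_{\lambda\cap\mu}m'_{\lambda\cap\mu}>0$. On the other hand, $[\lambda]\otimes[\mu]\cong\bigoplus_\nu g_{\mu\nu\lambda}[\nu]$ as $S_n$-modules, so $([\lambda]\otimes[\mu])^{S_a}\cong\bigoplus_\nu g_{\mu\nu\lambda}[\nu]^{S_a}$. Hence some $\nu$ with $g_{\mu\nu\lambda}\neq 0$ has $[\nu]^{S_a}\neq 0$, i.e.\ $\nu_1\geq a$ by Step~1. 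The inequality already proved then forces $\nu_1=a$; this is also why the `not $S_{a+1}$' condition in your sketch is automatic. With this addition, your proposal proves the theorem in the strong form in which it is cited.
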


\begin{lemma}[Complementing projections agree on symmetric states, lemma 3.1 in (arXiv ver. of) \cite{cvz23universal}] \label{lem:complementaryProjectionsOnSymmetric}
    Let $S\subsetneq[k]$ be some proper non-empty subset of the $k$-parties of a tensor in $V_1\otimes\cdots\otimes V_k$, and $\lambda\vdash n$ a partition. Let $P^{V_{[k]}}_{(n)}$ be the projection of $(V_1\otimes\cdots\otimes V_k)^{\otimes n}$ to the symmetric subspace, and $P^{V_S}_\lambda,P^{V_{\overline{S}}}_\lambda$ the projections to the $\lambda$-isotypical subspaces in the parties that are in $S$ or $\overline{S}$ respectively. Then
    \begin{enumerate}
        \item $P^{V_S}_\lambda P^{V_{[k]}}_{(n)}=P^{V_{[k]}}_{(n)}P^{V_S}_\lambda$ and
        \item $P^{V_S}_\lambda P^{V_{[k]}}_{(n)}=P^{V_{\overline{S}}}_\lambda P^{V_{[k]}}_{(n)}$
    \end{enumerate}
\end{lemma}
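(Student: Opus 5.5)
The plan is to work entirely inside the group algebra of $S_n$ acting on $(V_1\ot\dotsb\ot V_k)^{\ot n}\cong V_S^{\ot n}\ot V_{\overline S}^{\ot n}$. Write $\tau_S$ and $\tau_{\overline S}$ for the permutation actions on the two factors, so that the diagonal action is $\tau_{[k]}(\pi)=\tau_S(\pi)\ot\tau_{\overline S}(\pi)$. The two projectors then have the standard forms
\[
P^{V_{[k]}}_{(n)}=\frac{1}{n!}\sum_{\pi\in S_n}\tau_S(\pi)\ot\tau_{\overline S}(\pi),
\qquad
P^{V_S}_\lambda=\frac{\dim[\lambda]}{n!}\sum_{\pi\in S_n}\chi_\lambda^*(\pi)\,\tau_S(\pi)\ot \mathrm{id}.
\]
The second formula is the character formula for isotypic projectors. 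By Schur--Weyl duality, the $\lambda$-isotypic projector for $S_n$ on $V_S^{\ot n}$ coincides with the projector onto $\mathbb S_\lambda(V_S)\ot[\lambda]$, so this is the projector used to define $P^{V_S}_\lambda$.

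For (i), I use that $\chi_\lambda$ is a class function. This makes $\sum_\pi\chi^*_\lambda(\pi)\pi$ central in $\C[S_n]$. Hence $P^{V_S}_\lambda$ commutes with every $\tau_S(\sigma)\ot\mathrm{id}$. It also trivially commutes with every $\mathrm{id}\ot\tau_{\overline S}(\sigma)$, since the two act on different tensor factors. It therefore commutes with each summand $\tau_S(\sigma)\ot\tau_{\overline S}(\sigma)$ of $P^{V_{[k]}}_{(n)}$, which gives (i).

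For (ii), it suffices to check the identity on vectors $x$ in the image of $P^{V_{[k]}}_{(n)}$, since both sides are right-multiplied by that projector. Such $x$ satisfy $(\tau_S(\pi)\ot\tau_{\overline S}(\pi))x=x$ for all $\pi$. Applying $\mathrm{id}\ot\tau_{\overline S}(\pi^{-1})$ to this relation gives
\[
(\tau_S(\pi)\ot\mathrm{id})\,x=(\mathrm{id}\ot\tau_{\overline S}(\pi^{-1}))\,x .
\]
Substituting this into the character formula gives
\[
P^{V_S}_\lambda x=\frac{\dim[\lambda]}{n!}\sum_\pi\chi^*_\lambda(\pi)\,\tau_{\overline S}(\pi^{-1})x .
\]
Reindexing $\pi\mapsto\pi^{-1}$ and using $\chi_\lambda(\pi^{-1})=\chi_\lambda(\pi)$, this becomes $P^{V_{\overline S}}_\lambda x$. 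The character identity holds because $\pi$ and $\pi^{-1}$ have the same cycle type, so they are conjugate in $S_n$; equivalently, characters of $S_n$ are real.

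This proves (ii). There is no real obstacle: the only points needing care are the convention $\chi^*$ versus $\chi$ and the direction of the permutation action. Both are harmless because the characters are real and $\pi\mapsto\pi^{-1}$ is a bijection of $S_n$.
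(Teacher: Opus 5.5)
Your proof is correct. The paper gives no proof of this lemma; it only cites Lemma~3.1 of the arXiv version of \cite{cvz23universal}, so there is no in-paper argument to compare against.

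Your route is a direct group-algebra computation. Part (i) follows from centrality of $\sum_\pi \overline{\chi_\lambda(\pi)}\,\pi$ in $\C[S_n]$. Part (ii) follows by trading $\tau_S(\pi)\ot\mathrm{id}$ for $\mathrm{id}\ot\tau_{\overline S}(\pi^{-1})$ on symmetric vectors, reindexing, and using that $S_n$-characters are real. This matches the character formulas the paper records in \cref{eq:proj_char_form,eq:proj-restricted-char-form}.

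The more common representation-theoretic argument applies Schur--Weyl duality to both factors. It writes the symmetric subspace as
\[
(V_S^{\ot n}\ot V_{\overline S}^{\ot n})^{S_n}\cong\bigoplus_{\lambda,\mu}\mathbb S_\lambda(V_S)\ot\mathbb S_\mu(V_{\overline S})\ot([\lambda]\ot[\mu])^{S_n}.
\]
The invariant space $([\lambda]\ot[\mu])^{S_n}$ is one-dimensional if $\lambda=\mu$ and zero otherwise, because $S_n$-irreducibles are self-dual. This is the Cauchy decomposition $\mathrm{Sym}^n(V_S\ot V_{\overline S})\cong\bigoplus_\lambda\mathbb S_\lambda(V_S)\ot\mathbb S_\lambda(V_{\overline S})$, and both parts of the lemma are immediate from it.

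Your version is more elementary: it uses Schur--Weyl only to identify $P^{V_S}_\lambda$ with the $S_n$-isotypic projector. The Cauchy route gives more: it identifies the image of $P^{V_b}_\lambda$ as $\mathbb S_\lambda(V_S)\ot\mathbb S_\lambda(V_{\overline S})$. That is the kind of fact behind dimension counts such as the rank-one claim for $P^{AB|CD}_{(1,1,1,1)}$ in the proof of \cref{prop:psi p gives separation}.
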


\begin{proof}[Proof of \Cref{cl:ineqs-W4-upper-quantum-func}]
    Let $\mu_A,\mu_B,\mu_C,\mu_D, \mu_{AB}$ be the corresponding integer partitions of $n$ such that $P^{V_A}_{\mu_A}P^{V_B}_{\mu_B}P^{V_C}_{\mu_C}P^{V_D}_{\mu_D}P^{V_A\otimes V_B}_{\mu_{AB}}(\W_4)\neq0$. 
    
    \ref{eq:trivialBoundsSecondRows} As the local dimension of each of the four parties in $\W_4$ is $2$ (qubits), their corresponding partitions are of at most two rows. $\mu_{AB}$ is also of at most two rows, as $\W_4=(e_{2} \otimes e_1 + e_1 \otimes e_2) \otimes e_1\otimes e_1 + e_1\otimes e_1 \otimes (e_2 \otimes e_1 + e_1 \otimes e_2)$. Denote the second coefficients of the normalised partitions $\bar{\mu}_A(=\tfrac1{n}\mu_A),\ldots,\bar{\mu}_D,\bar{\mu}_{AB}$ by $\alpha,\beta,\gamma,\delta,z$ respectively. As the normalisation of the second rows, meaning the shorter ones, these are all at most $\tfrac12$.

    \ref{eq:triangle-inequality} For the inequalities~$\abs{\alpha-\beta} \leq z$, $\abs{\gamma-\delta} \leq z$ we will use \cref{thm:klemmDvirClausenMaier}.
    Applying it to $\nu\coloneq\mu_{AB},\mu\coloneq\mu_A,\lambda\coloneq\mu_B$, as $\W_4$ is non-vanishing for the composition of their projectors, the Kronecker coefficient is non-zero. By the theorem this implies $1-z\leq \min\{1-\alpha,1-\beta\}+\min\{\alpha,\beta\}=\min\{1-\alpha+\beta,1-\beta+\alpha\}=1-|\alpha-\beta|$ which is $|\alpha-\beta|\leq z$. For $\abs{\gamma-\delta}\leq z$ we do the same with $\nu\coloneq\mu_{AB},\mu\coloneq\mu_C,\lambda\coloneq\mu_D$. To use $\mu_{AB}$ on the pair $CD$ we use \cref{lem:complementaryProjectionsOnSymmetric} and the fact that $\W_4$ is a symmetric tensor, implying that projecting $AB$ on $\mu_{AB}$ is the same as projecting $CD$ on the same isotypic component $\mu_{AB}$.

\ref{eq:firstRowBoundingSecondRows} For the inequalities~$\alpha + \beta \leq 1-z$, $\gamma+\delta \leq 1-z$ we observe that from the perspective of the tripartition $\{AB\},\{C\},\{D\}$, $\W_4$ is equivalent to the tripartite $\W_3$ (as shown above).
    Using the defining inequality for the entanglement polytope of $\W_3$, we know the three normalized first-rows of the single-parties' partitions must add up to at least $2$ (see~\cite{entanglementPolytope2013}): $(1-\gamma)+(1-\delta)+(1-z)\geq2$ which translates to $\gamma+\delta\leq1-z$. Doing the same for $\{A\},\{B\},\{CD\}$ gives $\alpha+\beta\leq1-z$.

    \ref{eq:W4-moment-polytope} The inequality~$\alpha + \beta + \gamma + \delta \leq 1$ follows directly from from~\cite[Thm.~6.1]{nessStratificationNullCone1984}; alternatively, the moment (entanglement) polytope of~$\W_4$ is given explicitly in~\cite[p.1207 or eq. S17 in Supplemental Materials]{entanglementPolytope2013}.
\end{proof}

\begin{remark}
    The inequalities in \cref{cl:ineqs-W4-upper-quantum-func}\ref{eq:triangle-inequality} generalize to what are called \emph{polygonal} inequalities in the quantum information theory community~\cite{higuchiOneQubitReducedStates2003,bravyiCompatibilityLocalMultipartite2004}.
    For instance, for general~$4$-qubit states, one has~$\alpha+\beta+\gamma \geq \delta$. In our case this inequality is implied by~$\alpha+\beta \leq 1-z \leq 1-\gamma+\delta$, but the inequality~$\alpha+\beta \leq 1-z$ is specific for~$\W_4$ (or rather, it comes from~$\W_3$'s entanglement polytope).
\end{remark}

\ifanonymous
\else
\section*{Acknowledgements}
AB acknowledges support from Uniandes Faculty of Science project No. INV-2023-162-2814.
MC, TF and HN acknowledge financial support from Villum Fonden via the QMATH Centre of Excellence (Grant No.~10059) and the Novo Nordisk Foundation (grant NNF20OC0059939 `Quantum for Life').
HN also acknowledges support by the European Union via a ERC grant (QInteract, Grant No.~101078107), and via HORIZON-MSCA-PF-2024-PF-01 grant agreement, project number: 101212204 (AsympTensorPolytope).
IL thanks the QMATH Center at the University of Copenhagen for the hospitality during his stays, and the Danish-Israeli Study Foundation in Memory of Josef and Regine Nachemson for funding his visits. The research of IL was funded by the European Union (ERC, EACTP, 101142020).
Views and opinions expressed are however those of the author(s) only and do not necessarily reflect those of the European Union or the European Research Council Executive Agency. Neither the European Union nor the granting authority can be held responsible for them.
\fi

\appendix

\section{Maximizing entropy with determinant constraints}
\label{sec:max_ent_with_det}
In this section, we determine the maximum possible entropy of a probability distribution $p$ over $\{1,\ldots, k\}$ subject to the determinant-like constraint $p_1 p_2 \cdots p_k \leq c$, which we denote by $S_k(c)$.

\begin{proposition}
\label{prop:Edetab-exact-value}
    Let $c > 0$ and $k \geq 2$ and let $S_k(c)$ be defined as follows:
    \begin{equation}
        S_k(c) \coloneq \max\left\{ - \sum_{i=1}^{k}p_i\log p_i \,\middle|\, p_1 \geq p_2 \geq \cdots \geq p_k \geq 0,  \sum_{i=1}^{k}p_i = 1, \prod_{i=1}^{k} p_i \leq c \right\}.
    \end{equation}
    \begin{itemize}
        \item If $c \geq (\frac{1}{k})^{k}$, then $S_k(c) = \log k$ with maximum achieved by uniform $p_1 = \cdots=p_k=\frac{1}{k}$.
        \item Otherwise if $c < (\frac{1}{k})^{k}$, then
        \begin{align}
          \label{eq:Skc value}
             S_k(c) 
             &= -\gamma \log\frac{\gamma}{k-1} - (1-\gamma) \log (1-\gamma).
        \end{align}
        with maximum achieved by $p_1=p_2=\cdots=p_{k-1}=\frac{\gamma}{k-1}$ and $p_k = 1- \gamma$
        where $\gamma \in (1-\frac{1}{k},1)$ is the unique solution to the determinant constraint $\left(\frac{\gamma}{k-1}\right)^{k-1}(1-\gamma) = c$.
    \end{itemize}
\end{proposition}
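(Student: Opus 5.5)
The case $c \geq (1/k)^k$ is immediate. The uniform distribution is feasible, since its product equals $k^{-k} \leq c$, and it attains the global maximum $\log k$ of the entropy on the simplex.

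For $c < k^{-k}$ I would argue in four steps.

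\emph{Step 1: the constraint is active.} Entropy is strictly concave and the feasible set is compact, so a maximizer $p^*$ exists. The feasible set is not convex, because $\{\prod p_i \leq c\}$ is the complement of the convex set $\{\prod p_i > c\}$ intersected with the simplex. So concavity alone does not settle the problem. Suppose $\prod p^*_i < c$. Moving $p^*$ slightly toward the uniform distribution keeps it feasible and strictly increases the entropy, a contradiction. Hence $\prod p^*_i = c$.

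\emph{Step 2: Lagrange conditions.} Since $c > 0$, all $p^*_i > 0$, so the point is interior to the simplex. The stationarity conditions read
\[
-\log p_i - 1 = \nu + \mu / p_i \quad \text{for all } i.
\]
The function $x \mapsto -\log x - \mu/x$ takes each value at most twice on $(0,\infty)$: its derivative $-1/x + \mu/x^2$ changes sign at most once. Therefore the optimal $p^*$ takes at most two distinct values. Write it as $a$ repeated $m$ times and $b$ repeated $k-m$ times, with $a > b$. The degenerate case $\mu = 0$ forces uniformity, which is infeasible.

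\emph{Step 3: choose $m = k-1$.} This is the main obstacle: among two-valued distributions with product $c$, I must show that making all but one coordinate equal, with a single small coordinate $b$, gives the largest entropy. I would parametrize both quantities in the small-coordinate weight. Fix $m$ and solve $a^m b^{k-m} = c$ with $ma + (k-m)b = 1$. I would then compare the entropies across $m$ at equal product.

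A cleaner route is a smoothing/majorization argument. Among distributions with fixed product, that is, fixed $\sum \log p_i$, entropy is maximized by making the vector as "flat" as possible. Equivalently, I would look at the pair of functionals $\bigl(\sum \log p_i, -\sum p_i \log p_i\bigr)$ and show that the upper boundary of the attainable region is traced by the family
\[
\left(\tfrac{\gamma}{k-1}, \dots, \tfrac{\gamma}{k-1}, 1-\gamma\right).
\]
To do this, take any two-valued candidate with $m < k-1$ copies of the large value. Merge two of the small coordinates: replace $(b,b)$ by $(b+t,\, b-t)$, adjusting the other coordinates so that the product is preserved. A second-order computation should show that the entropy increases. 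If this becomes messy, I would fall back on a direct one-variable comparison for $m = k-1$ versus general $m$, using convexity in $\log$-coordinates.

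\emph{Step 4: the formula.} With $p = \bigl(\tfrac{\gamma}{k-1}, \dots, \tfrac{\gamma}{k-1}, 1-\gamma\bigr)$, substituting into the entropy gives \eqref{eq:Skc value}, since $-\sum_i p_i \log p_i = -\gamma \log\tfrac{\gamma}{k-1} - (1-\gamma)\log(1-\gamma)$. For the parameter, define
\[
g(\gamma) = \left(\tfrac{\gamma}{k-1}\right)^{k-1}(1-\gamma).
\]
The ordering $p_k = 1-\gamma$ smallest requires $\gamma \in (1-\tfrac1k, 1)$. On this interval $g$ is strictly decreasing, from $k^{-k}$ at $\gamma = 1-\tfrac1k$ to $0$ at $\gamma = 1$. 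Hence for each $c \in (0, k^{-k})$ there is exactly one $\gamma$ in the interval with $g(\gamma) = c$. This gives existence and uniqueness of $\gamma$. The competing configuration, with one large coordinate and $k-1$ small ones, is the one Step 3 has to rule out.
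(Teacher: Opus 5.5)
Your Steps 1, 2 and 4 are sound and follow the paper. The product constraint is active at the optimum; your move-toward-uniform argument is a cleaner version of the paper's footnote. The Lagrange conditions force at most two distinct values. The map $\gamma\mapsto(\frac{\gamma}{k-1})^{k-1}(1-\gamma)$ decreases strictly from $k^{-k}$ to $0$ on $(1-\frac{1}{k},1)$.

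The gap is Step~3, which is the only non-routine part of the statement. You never show that a two-valued critical point in which the smaller value occurs at least twice (i.e.\ $m\le k-2$, not only $m=1$) is not the maximizer. For $k\ge 3$ these are genuine critical points: every two-valued point with product $c$ admits Lagrange multipliers. So an actual estimate is needed, and ``a second-order computation should show that the entropy increases'' is not one. Your heuristics do not supply it either. Splitting $(b,b)$ into $(b+t,b-t)$ by itself \emph{lowers} the entropy (by order $t^2/b$), so everything hinges on whether the product-restoring correction elsewhere gains more. Majorization cannot decide anything here: $\sum_l\log p_l$ is strictly Schur-concave, so two distinct sorted distributions with the same product are never comparable in the majorization order.

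Your perturbation does work once made precise, via the second-order necessary condition. By Step~1, a maximizer is a local maximizer of $H$ on $\{\sum_l p_l=1,\ \prod_l p_l=c\}$, and LICQ holds there since $p$ is not uniform. Write $\mathcal L=H+\nu(\sum_l p_l-1)+\mu(\sum_l\log p_l-\log c)$. Subtracting the stationarity equations at the two values $a>b$ gives $\mu=-ab/\Lambda$, where $\Lambda=(a-b)/\log(a/b)$ is the logarithmic mean, so $b<\Lambda<a$. Suppose $p_i=p_j=b$. Then $v=e_i-e_j$ satisfies $\sum_l v_l=0=\sum_l v_l/p_l$, so it is a tangent direction. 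Since $\nabla^2\mathcal L$ is diagonal with entries $-1/p_l-\mu/p_l^2$, one gets $v^{\top}\nabla^2\mathcal L\,v=\frac{2}{b}\bigl(\frac{a}{\Lambda}-1\bigr)>0$, contradicting local maximality. Hence the smaller value occurs exactly once, $m=k-1$, and your Step~4 concludes.

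The paper proceeds differently. It lets $m$ range over the reals in $(0,k)$ and takes $q(m)\in(m/k,1)$ as the root of the product constraint. It then shows by implicit differentiation that the entropy $h(m)$ of the resulting two-valued distribution is increasing, which compares all critical points globally. In that computation the sign of $\partial_m F$ should be reversed; in fact $q'(m)>0$. So the squared-logarithm term in $h'(m)$ is negative, and $h'(m)>0$ amounts to the geometric--logarithmic mean inequality $\sqrt{xy}<(x-y)/\log(x/y)$ for $x=q/m>y=(1-q)/(k-m)$. Your local route, once completed, avoids the continuous-$m$ calculus and needs only the trivial bound $\Lambda<a$.
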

\begin{proof}
    First and foremost, if $c \geq (\frac{1}{k})^{k}$, then $S(c) = \log k$ and is achieved when the distribution is uniform, i.e., $p_i=\frac{1}{k}$ for all $i$.
    Henceforth we only consider the case where $c < (\frac{1}{k})^{k}$.
    Without loss of generality, we may assume saturation, $\prod_{i=1}^{k} p_i = c$, holds at the optimal value\footnote{Otherwise, if a distribution $q$ maximizes the entropy $-\sum_{i}q_i\log q_i$ while $\prod_{i=1}^{k} q_i < c$, it would constitute a local optimum distinct from the uniform distribution for the unconstrained optimization which is a contradiction by convexity and uniqueness of the optimum for the unconstrained optimization.}.
    The Lagrangian for the resulting optimization is
    \begin{equation*}
        \mathcal L(p, \mu_1, \mu_2) \coloneq - \sum_{i=1}^{k}p_i\log p_i + \mu_1(\sum_{i=1}^{k}p_i-1) + \mu_2 (\sum_{i=1}^{k}\log p_i-\log c).
    \end{equation*}
    Therefore the critical values for $(p_1, p_2,\ldots, p_k)$ must satisfy, for each $i \in \{1,\ldots, k\}$, the equation
    \begin{equation*}
        0 =\partial_{p_i} \mathcal L = - \log p_i - 1 + \mu_1 + \frac{\mu_2}{p_i},
    \end{equation*}
    or equivalently the equation
    \begin{equation*}
        p_i \log p_i = (\mu_1-1) p_i + \mu_2.
    \end{equation*}
    Since the left-hand side is a convex function in $p_i$ and the right-hand side is linear, there are at most two possible values for $p_i$.
    Therefore, the critical values for $(p_1, p_2,\ldots, p_k)$ are bimodal meaning they are of the form
    \begin{equation}
        (p_1, p_2,\ldots, p_k) = (\underbrace{\frac{\lambda}{m},\ldots, \frac{\lambda}{m}}_{m \text{ times}}, \underbrace{\frac{1-\lambda}{k-m},\ldots,\frac{1-\lambda}{k-m}}_{k-m \text{ times}})
        \label{eq:bimodal_dist}
    \end{equation}
    for some $m \in \{1, 2, \ldots, k-1\}$ and $\lambda \in[0,1]$ satisfying the determinant constraint given by
    \begin{equation*}
        F(m,\lambda) \coloneqq f_m(\lambda) = c - \left(\frac{\lambda}{m}\right)^{m} \left(\frac{1-\lambda}{k-m}\right)^{k-m} = 0.
    \end{equation*}
    By sorting the probabilities $(p_1, p_2,\ldots, p_k)$ and relabelling $m \leftrightarrow k-m$, we may also assume that $\frac{\lambda}{m} \geq \frac{1-\lambda}{k-m}$ or equivalently 
    \begin{equation*}
        \lambda \geq \frac{m}{k}.
    \end{equation*}
    For the sake of analysis, now suppose $m$ need not be an integer and $m \in (0, k)$. 
    Since $f_m(0) = f_m(1) = c > 0$, but $f_m(\frac{m}{k}) = c - (\frac{1}{k})^{k} < 0$, the intermediate value theorem implies there always exists a real root of $f_m$ for $\lambda \in (0,\frac{m}{k})$ and another real root of $f_m$ for $\lambda \in (\frac{m}{k}, 1)$.
    Moreover, as $f_m(\lambda) \leq c$ for all $\lambda \in [0,1]$, and
    \begin{align*}
        f_m'(\lambda)
        &= \frac{k(c-f_m(\lambda))}{\lambda(1-\lambda)}\left(\lambda-\frac{m}{k}\right)
    \end{align*}
    we conclude that $f'_{m}(\lambda) > 0$ for all $\lambda \in (\frac{m}{k},1)$, meaning $f_{m}(\lambda)$ is strictly increasing for all $\lambda \in (\frac{m}{k},1)$, and thus the real root in $(\frac{m}{k}, 1)$ is both simple and unique.
    Let $q(m) \in (\frac{m}{k}, 1)$ be this unique real root of $f_m$, meaning $F(m, q(m)) = f_m(q(m)) = 0$.
    
    In summary, we have established that the determinant constrained maximum entropy, $S_k(c)$,
    is given by
    \begin{equation*}
        S_k(c) = \max_{m \in \{1,2,\ldots, k-1\}} h(m)
    \end{equation*}
    where $h(m)$ is the entropy of the distribution in \cref{eq:bimodal_dist} where $\lambda$ is determined by $\lambda = q(m)$ for $m \in (0,k)$: 
    \begin{equation*}
        h(m) \coloneqq -q(m) \log\frac{q(m)}{m} - (1-q(m)) \log \frac{1-q(m)}{k-m}.
    \end{equation*}
    To conclude, we will establish that $m = k -1$ is the maximizer for $S_k(c)$, i.e., $S_k(c) = h(k-1)$, by proving $h(m)$ is an increasing function on the interval $m \in (0, k)$.
    Note that \begin{equation*}
        h'(m) = q'(m)\left( \log \frac{1- q(m)}{k-m} - \log\frac{q(m)}{m}  \right) + \frac{k}{m(k-m)}\left(q(m)-\frac{m}{k}\right)
    \end{equation*}
    From here we can compute $q'(m)$ because
    \begin{equation*}
        0 = F(m, q(m)) \implies 0 = \frac{\mathrm{d} [F(m, q(m))]}{\mathrm{d}m} = (\partial_{\lambda}F)(m,q(m)) q'(m) + (\partial_{m}F)(m,q(m)),   
    \end{equation*}
    and therefore
    \begin{equation*}
        q'(m) = -\frac{(\partial_{m}F)(m,q(m))}{(\partial_{\lambda}F)(m,q(m))} .
    \end{equation*}
    Note that $(\partial_{\lambda}F)(m,q(m)) = f_m'(q(m)) > 0$ so this $q'(m)$ is well-defined.   
    In fact, we have
    \begin{equation*}
        (\partial_{\lambda}F)(m,q(m)) = f_m'(q(m)) = \frac{kc}{q(m)(1-q(m))}\left(q(m)-\frac{m}{k}\right) > 0,
    \end{equation*}
    and \begin{equation*}
        (\partial_{m}F)(m,\lambda) = -(c-f_m(\lambda)) \left( \log \frac{1-\lambda}{k-m} - \log \frac{\lambda}{m}\right),
    \end{equation*}
    which means for $\lambda = q(m)$,
    \begin{equation*}
        (\partial_{m}F)(m,q(m)) = -c \left( \log \frac{1-q(m)}{k-m} - \log \frac{q(m)}{m}\right),
    \end{equation*}
    Altogether, we obtain
    \begin{equation*}
        h'(m) = \frac{c}{f_m'(q(m))}\left( \log \frac{1- q(m)}{k-m} - \log\frac{q(m)}{m}  \right)^2 + \frac{k}{m(k-m)}\left(q(m)-\frac{m}{k}\right),
    \end{equation*}
    and therefore $h'(m) > 0$ because $f_m'(q(m)) > 0$ and $q(m) > \frac{m}{k}$.
    Therefore, $S_k(c) = h(k-1)$ as claimed.
\end{proof}

\section{The \texorpdfstring{$\Werner_\gamma$}{Q gamma} tensor}
\label{sec:werner-gamma}

Here we consider the $4$-tensor $\Werner_\gamma \in \mathbb C^{2}\ot \mathbb C^{2} \ot \mathbb C^{2} \ot \mathbb C^{2}$ which can be defined in terms of four $2$-tensors on $\mathbb C^{2} \ot \mathbb C^{2}$:
\begin{align*}
    \Phi_{\pm} = \frac{e_1\ot e_1\pm e_2\ot e_2}{\sqrt{2}}, \qquad
    \Psi_{\pm} = \frac{e_1 \ot e_2 \pm e_2 \ot e_0}{\sqrt{2}}.
\end{align*}
For $\gamma \in [0,1]$, the tensor $\Werner_\gamma$ is of the form
\begin{align}
    \Werner_\gamma 
    &\coloneqq
    \sqrt{1-\gamma}\left(\Psi_{-}^{AB}\ot\Psi_{-}^{CD}\right)+\sqrt{\frac{\gamma}{3}}\left(\Phi_{+}^{AB}\ot\Phi_{+}^{CD}+\Phi_{-}^{AB}\ot\Phi_{-}^{CD}+\Psi_{+}^{AB}\ot\Psi_{+}^{CD}\right).
\end{align}
This unit tensor can understood as a purification of the so-called \textit{Werner state} \cite{werner1989quantum}
\begin{align}
    \rho_\gamma 
    &=  (1-\gamma)P_{\vee^2\mathbb C^2} + \frac{\gamma}{3} P_{\wedge^2\mathbb C^2}.
\end{align}
which is a positive semidefinite matrix on $\mathbb C^{2}\ot \mathbb C^{2}$ for $\gamma \in [0,1]$.
For any singleton bipartition, e.g. $A|BCD$, we have uniform eigenvalues for the corresponding marginal,
\[
    \mathrm{eig}_{A|BCD}(\Werner_\gamma) = \left(\frac{1}{2},\frac{1}{2}\right),
\]
while for the $AB|CD$ bipartition, we have non-uniform eigenvalues
\[
    \mathrm{eig}_{AB|CD}(\Werner_\gamma) = \left(1-\gamma,\frac{\gamma}{3},\frac{\gamma}{3},\frac{\gamma}{3}\right).
\]
The $AB|CD$ entropy of $\Werner_\gamma$ is therefore
\[
    H_{AB|CD}(\Werner_\gamma)=-\gamma \log \frac{\gamma}{3} -(1-\gamma)\log (1-\gamma).
\]
We use the determinant bound from \cref{cor:Edetab bound,sec:max_ent_with_det}.
First note that
\[
  c_{\Werner_\gamma} = \frac{\abs{\det((\Werner_\gamma)_{AB|CD})}^2}{\capa(\Werner_\gamma)^{8}} = (1-\gamma)\left(\frac{\gamma}{3}\right)^3,
\]
since~$\Werner_\gamma$ has uniform one-body marginals and hence has~$\capa(\Werner_\gamma)=\norm{\Werner_\gamma}=1$ (\cref{cor:kempf-ness-tensor}).
We can compute $\tilde E^{\det}_{AB}(c_{\Werner_\gamma})$ using 
\cref{prop:Edetab-exact-value} and determine 
\[
    \gamma \in \left[\frac{3}{4},1\right] \implies  E_{AB}(\Werner_\gamma) \leq \tilde E^{\det}_{AB}(c_{\Werner_\gamma}) = H_{AB|CD}(\Werner_\gamma).
\]
The interval $\frac{3}{4} \leq \gamma \leq 1$ corresponds to the regime where $\frac{\gamma}{3}$ is a larger eigenvalue than $1-\gamma$.
The last equality follows from~\cref{eq:Skc value}.
Since $H_{AB|CD}(\Werner_\gamma) \leq E_{AB}(\Werner_\gamma)$ by definition, we conclude $E_{AB}(\Werner_\gamma) = H_{AB|CD}(\Werner_\gamma)$ holds for all $\gamma \in \left[\frac{3}{4},1\right]$.

\section{Character formulas for isotypic projectors}
\label{sec:char_forms}

The definition of the upper quantum functional, \cref{def:upper qfunc}, on a tensor $\psi$, involves evaluating a product of projection operators, $\prod_{b \in \supp \theta} P_{\lambda^{(b)}}^{V_b}$, applied to the $n$-th Kronecker power $\psi^{\kron n}$.
The purpose of this section is to describe formulas for these projection operators in terms of characters, $\chi_{\lambda}$ of the symmetric group $S_n$.

For any representation $\eta\colon S_n \to \mathrm{End}(W)$ on a finite dimensional complex vector space $W$, the operator
\begin{equation}
    P_{\lambda}^{W} \coloneq  \frac{\dim([\lambda])}{n!} \sum_{\pi \in S_n} \overline{\chi_{\lambda}(\pi)} \eta(\pi)
    \label{eq:proj_char_form}
\end{equation}
is the projection operator onto the isotypic subspace $[\lambda] \otimes \mathrm{Hom}([\lambda], W) \xhookrightarrow{} W$ of $W$ \cite[Sec.~2.4]{fulton2013representation}.
Here $\chi_{\lambda}$ denotes the character of the irreducible representation $\pi_{\lambda}\colon S_n \to \mathrm{End}([\lambda])$ of the symmetric group $S_n$ labelled by the Young diagram $\lambda$.
Note that $\overline{\chi_{\lambda}(\pi)} = \chi_{\lambda^{*}}(\pi)=\chi_{\lambda}(\pi^{-1})$ holds for all groups, but additionally $\chi_{\lambda}(\pi^{-1}) = \chi_{\lambda}(\pi)$ holds for the symmetric group as permutations are conjugate to their inverses.

For the purposes of the upper quantum functional, the relevant representations of the symmetric group are the tensor-permutation representations $\tau \colon S_n \to \mathrm{End}(V^{\otimes n})$ which permute the tensor factors of $V$.
Given a bipartition $b = \{S, \overline{S}\}$ of $[k]$, let $\tau_{S}$ (resp. $T_{\overline{S}}$) denote the tensor permutation representation associated to $V_S$ (resp. $V_{\overline{S}}$).
After applying \cref{eq:proj_char_form} to the definition $P_{\lambda}^{V_{b}} \coloneqq P_\lambda^{V_S} P_{(n)}^{V_{[k]}}$ we obtain the formula,
\begin{equation}
    P_{\lambda}^{V_{b}} = \frac{\dim([\lambda])}{(n!)^{2}}\sum_{\pi, \sigma \in S_n} \overline{\chi_{\lambda}(\pi)}  \tau_{S}(\pi \sigma) \otimes \tau_{\overline{S}}(\sigma)
    \label{eq:proj-restricted-char-form}
\end{equation}
Using this formula for $P_{\lambda}^{V_{b}}$ we are able to evaluate any product of projectors $\prod_{b \in \supp \theta} P_{\lambda^{(b)}}^{V_b}$ relevant for the upper quantum functional $F^{\theta}$.

\section{Separations for distributions with non-laminar support}\label{sec:crossing}
We show that the upper and lower quantum functionals can be separated already for very simple distributions with non-laminar support\footnote{We follow the implicit suggestion in the original paper~\cite[Rem. 3.15]{cvz23universal} defining the upper quantum functional with respect to a~\emph{fixed} ordering of the projectors.}.
Specifically, for $\theta=(\theta_{AB|CD},\theta_{BC|AD})$ with~$\theta_{AB|CD}, \theta_{BC|AD} \neq 0$, we show that the rank-$2$ unit tensor~$\unit{2}$ on~$4$ systems~$A,B,C,D$ satisfies $E^\theta(\unit{2})>E_\theta(\unit{2})$.

\begin{theorem}\label{thm:fixed-order-crossing-separation}
    For any $0<\theta_{AB|CD}<1$, define $\theta_{BC|AD}=1-\theta_{AB|CD}$ and $\theta=(\theta_{AB|CD},\theta_{BC|AD})$.
    Let
    \[
        \unit{2} = \sum_{i=1}^2 e_i \ot e_i \ot e_i \ot e_i.
    \]
    Then we have
    \[
        E_\theta(\unit{2}) \leq 1 < E^\theta(\unit{2}),
    \]
    where for the definition of the logarithmic upper quantum functional we fix the order of operators as follows:
    \[
    \begin{split}
        E^\theta(\psi) = \sup \{& \theta_{AB|CD} H\big(\frac{\lambda_{AB|CD}}{n}\big) + \theta_{BC|AD} H\big(\frac{\lambda_{BC|AD}}{n}\big) : \\
        & n \geq 1, \, \lambda_{AB|CD}, \lambda_{BC|AD} \vdash n, \, P^{{AB|CD}}_{\lambda_{AB|CD}}P^{BC|AD}_{\lambda_{BC|AD}} \psi^{\ot n} \neq 0 \}. 
    \end{split}
    \]
\end{theorem}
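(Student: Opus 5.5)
The plan is to bound the lower functional by flattening ranks, and then exhibit one explicit witness for the upper functional. The witness relies on the fixed operator order: the first projector applied, $P^{BC|AD}$, can change the state so that the second projector $P^{AB|CD}$ sees Young diagrams with more than two rows.

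\emph{Lower bound.} Every flattening of $\unit{2}$ has rank $2$. By the footnote after \cref{prop:E_upper_M_bound}, for any $\varphi$ in the $\SL$-orbit closure of $\unit{2}$ we have $H_b(\varphi)\le \log R_b(\varphi)\le \log R_b(\unit{2}) = 1$. This argument holds for arbitrary $\theta\in\dists$, with no laminarity needed. Combining it with \cref{lem:lower qfunc proj orbit closure} gives $E_\theta(\unit{2})\le \theta_{AB|CD}+\theta_{BC|AD}=1$.

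\emph{Upper bound: choice of witness.} The rightmost projector $P^{BC|AD}_{\mu}$ acts directly on $\unit{2}^{\ot n}$. Its diagram $\mu$ is therefore constrained by the flattening rank $2$, so it has at most two rows. The subsequent projector $P^{AB|CD}_\lambda$, however, acts on $P^{BC|AD}_\mu\unit{2}^{\ot n}$, which need not have $AB$-Schur--Weyl support restricted to two rows, since $V_A\ot V_B\cong\C^4$. I would take $n=4$, $\mu=(2,2)$ and $\lambda=(1,1,1,1)$. This choice gives
\[
\theta_{AB|CD}\cdot 2+\theta_{BC|AD}\cdot 1 = 1+\theta_{AB|CD}>1,
\]
which proves the claim as soon as $P^{AB|CD}_{(1^4)}P^{BC|AD}_{(2,2)}\unit{2}^{\ot 4}\neq 0$. (For comparison, $n=3$ with $\mu=(2,1)$ and $\lambda=(1,1,1)$ would only give $1.585\,\theta_{AB|CD}+0.918\,\theta_{BC|AD}$, which does not exceed $1$ for all $\theta$.)

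\emph{Key computation, and the main obstacle.} As in the proof of \cref{prop:psi p gives separation}, $P^{AB|CD}_{(1^4)}$ has rank one, because $\dim \mathbb S_{(1^4)}(\C^4)=\dim[(1^4)]=1$. Its range is spanned by the vector $\omega=\det(\cdot)$ on the $AB|CD$ flattening, that is, the fully antisymmetrized $4\times4$ determinant tensor across $AB|CD$. Nonvanishing is therefore equivalent to
\[
\langle \omega, P^{BC|AD}_{(2,2)}\unit{2}^{\ot4}\rangle = \langle P^{BC|AD}_{(2,2)}\omega,\unit{2}^{\ot4}\rangle\neq0 .
\]
I would evaluate this with the character formula \eqref{eq:proj-restricted-char-form}. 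Because $\unit{2}^{\ot4}$ and $\omega$ are both invariant under the diagonal symmetrization, the sum collapses to
\[
\tfrac{2}{4!}\sum_{\pi\in S_4}\chi_{(2,2)}(\pi)\,\langle \omega,\tau_{BC}(\pi)\unit{2}^{\ot4}\rangle .
\]
The inner products $\langle \omega,\tau_{BC}(\pi)\unit{2}^{\ot 4}\rangle$ depend only on the conjugacy class of $\pi$. For each class they amount to a determinant of the $AB|CD$ flattening of the permuted tensor, indexed by basis strings in $\{1,2\}^4$. Computing these five class values and checking that the $\chi_{(2,2)}$-weighted sum is nonzero is the step I expect to be the real work.

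If the sum turns out to vanish, my fallback is to keep $\lambda=(1^4)$ and switch to $\mu=(3,1)$, whose entropy $h(1/4)\approx0.81$ still yields $2\theta_{AB|CD}+0.81\,\theta_{BC|AD}$. That value exceeds $1$ only for $\theta_{AB|CD}>0.16$, so for small $\theta_{AB|CD}$ I would instead take tensor powers: use $n=4m$ with $\mu=(2m,2m)$ and $\lambda$ close to the rectangle $(m,m,m,m)$. Nonvanishing in that case would follow from super-multiplicativity of the witness under Kronecker products of the $n=4$ witness with the trivial witness $((1,1),(1,1))$ at $n=2$.
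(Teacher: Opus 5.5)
Your plan has the same structure as the paper's proof. The flattening-rank bound $E_\theta(\unit{2})\le 1$ is exactly the paper's argument, and the upper bound again comes from one explicit witness at $n=4$ with $\lambda_{BC|AD}=(2,2)$, checked through \eqref{eq:proj-restricted-char-form}. The one difference is that you take $\lambda_{AB|CD}=(1,1,1,1)$, where the paper takes $(2,1,1)$. This buys two things. First, because $P^{AB|CD}_{(1^4)}$ has rank one, non-vanishing reduces to the single scalar $\langle\omega,P^{BC|AD}_{(2,2)}\unit{2}^{\ot4}\rangle$. The paper's $(2,1,1)$ projector has a much larger range, and the paper states non-vanishing without showing the computation. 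Second, your witness gives $E^\theta(\unit{2})\ge 1+\theta_{AB|CD}$, which is stronger than the paper's $\tfrac32\theta_{AB|CD}+\theta_{BC|AD}=1+\tfrac12\theta_{AB|CD}$.

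The scalar you left open is indeed non-zero. Write
\[
\tau_{BC}(\pi)\unit{2}^{\ot4}=\sum_{\mathbf i\in\{1,2\}^4}\bigotimes_{j=1}^4\bigl(e_{i_j}\ot e_{i_{\pi^{-1}(j)}}\ot e_{i_{\pi^{-1}(j)}}\ot e_{i_j}\bigr).
\]
In copy $j$, the $AB$-label is $(i_j,i_{\pi^{-1}(j)})$ and the $CD$-label is the swapped pair. Take $\omega=\sum_{\sigma,\tau\in S_4}\mathrm{sgn}(\sigma)\mathrm{sgn}(\tau)\bigotimes_j(f_{\sigma(j)}\ot g_{\tau(j)})$, with $f,g$ the product bases of $V_A\ot V_B$ and $V_C\ot V_D$ labelled by $11,12,21,22$. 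The swap acts as the transposition $12\leftrightarrow 21$ on these labels, so every surviving term contributes $-1$. Hence $\langle\omega,\tau_{BC}(\pi)\unit{2}^{\ot4}\rangle=-N_\pi$, where $N_\pi$ counts the $\mathbf i$ for which the four pairs $(i_j,i_{\pi^{-1}(j)})$ are distinct.

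On the classes of $\mathrm{id},(12),(123),(1234),(12)(34)$, with sizes $1,6,8,6,3$, one finds $N_\pi=0,4,6,4,0$. Since $\chi_{(2,2)}=2,0,-1,0,2$ on these classes, the scalar is $\tfrac{2}{24}\cdot 8\cdot 6=4\neq 0$. As a sanity check, the same computation gives $-4$ for $\mu=(4)$ and $0$ for the other three diagrams. These sum to $\langle\omega,\unit{2}^{\ot4}\rangle=0$, as they must, since $(\unit{2})_{AB|CD}$ has rank $2$.

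Your fallbacks are therefore unnecessary, which is just as well. The first one fails: with $\chi_{(3,1)}=3,1,0,-1,-1$ the weighted sum is $24-24=0$, so $P^{AB|CD}_{(1^4)}P^{BC|AD}_{(3,1)}\unit{2}^{\ot4}=0$. The tensor-power fallback would need feasible pairs to be closed under Kronecker products for ordered, non-commuting projectors. You have not justified this, and it is not automatic.
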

To prove \cref{thm:fixed-order-crossing-separation}, we upper bound the logarithmic lower quantum functional over $\unit{2}$ for $\theta \in \dists$, and then lower bound the logarithmic upper quantum functional over it for the specific distributions considered in the theorem.
\begin{claim}
    For any~$\theta \in \dists$, 
    \[
        E_\theta(\unit{2}) \leq \log_2(2) = 1.
    \]
\end{claim}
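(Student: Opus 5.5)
The plan is to bound every marginal entropy of every tensor in the orbit closure of $\unit{2}$ by the flattening rank, exactly as in the footnote following \cref{prop:E_upper_M_bound}. This route is valid for arbitrary $\theta \in \dists$, including non-laminar ones, because it never involves the projectors $P^{V_b}_\lambda$.

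\emph{Step 1: flattening ranks of $\unit{2}$.} For every bipartition $b = \{S, \overline S\}$ of $[4]$, the flattening $(\unit{2})_S$ sends $e_i^{\otimes \overline S}{}^*$ to $e_i^{\otimes S}$ for $i = 1,2$ and kills all other basis vectors. Hence $\trank_b(\unit{2}) = 2$.

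\emph{Step 2: ranks cannot increase in the orbit closure.} Take any $\varphi$ in $\overline{\GL \cdot \unit{2}}$; by \cref{lem:lower qfunc proj orbit closure} these are the tensors over which we maximise. The flattening of $(A_1 \otimes \dots \otimes A_4)\unit{2}$ is $(\bigotimes_{i\in S}A_i)\,(\unit{2})_S\,(\bigotimes_{i\in \overline S}A_i)^{T}$, so its rank is at most $2$. The set of matrices of rank $\le 2$ is Zariski-closed, defined by the vanishing of the $3\times 3$ minors. It is therefore also closed under limits, so $\trank_b(\varphi) \le 2$ for all $\varphi$ in the closure.

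\emph{Step 3: entropy bound.} The marginal $\rho_S$ of the normalization of $\varphi$ has rank $\trank_b(\varphi) \le 2$. The von Neumann entropy of a density matrix is at most the logarithm of its rank, so $H_b(\varphi) \le \log_2 2 = 1$. Averaging over $b$ with weights $\theta(b)$, which sum to $1$, gives $H_\theta(\varphi) \le 1$. Taking the maximum over the orbit closure via \cref{lem:lower qfunc proj orbit closure} then yields $E_\theta(\unit{2}) \le 1$.

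No step is a real obstacle. The only point needing care is that laminarity is never used: \cref{prop:E_upper_M_bound} itself is stated only for laminar $\theta$, so we argue directly on the lower functional rather than through $E_\theta \le E^\theta \le M^\theta$.
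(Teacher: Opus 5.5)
Your proof is correct and is essentially the paper's argument: every flattening rank of $\unit{2}$ is $2$, flattening ranks cannot increase under degeneration, and each marginal entropy is at most the logarithm of the rank, so the $\theta$-average is at most $1$ without any use of laminarity. As a minor simplification, $E_\theta$ is defined as a supremum over the $\GL$-orbit itself, where invertible local maps preserve flattening ranks exactly, so the passage to the orbit closure in your Step 2 is not actually needed.
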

\begin{proof}
    For any~$\emptyset \neq S \subsetneq \{A,B,C,D\}$, the flattening rank~$\rank_S(\unit{2})$ is~$2$.
    Since flattening ranks are degeneration monotone, if~$\unit{2} \degengeq \psi$, then~$\rank_S(\psi) \leq 2$.
    Therefore~$H_S(\psi) \leq 1$, and also~$H_\theta(\psi) \leq 1$.
\end{proof}

\begin{claim}\label{cl:crossing-not-normalised}
    Let $\theta=(\theta_{AB|CD},\theta_{BC|AD})$ for  $0<\theta_{AB|CD}<1$ and $\theta_{BC|AD}=1-\theta_{AB|CD}$. Then 
    \[E^\theta(\unit{2})\geq \frac{3}{2}\theta_{AB|CD}+\theta_{BC|AD}>1\]
\end{claim}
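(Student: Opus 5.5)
The plan is to exhibit a single witness: one $n$ and one pair of Young diagrams for which the ordered product of projectors does not annihilate $\unit{2}^{\ot n}$. Since $E^\theta$ is a supremum, such a witness is a lower bound. The target value $\tfrac32\theta_{AB|CD}+\theta_{BC|AD}$ suggests the following choice:
\[
n=4,\qquad \lambda_{AB|CD}=(2,1,1),\qquad \lambda_{BC|AD}=(2,2).
\]
Then $H(\lambda_{AB|CD}/4)=H(\tfrac12,\tfrac14,\tfrac14)=\tfrac32$ and $H(\lambda_{BC|AD}/4)=H(\tfrac12,\tfrac12)=1$. The strict inequality $\tfrac32\theta_{AB|CD}+\theta_{BC|AD}>1$ then follows immediately from $\theta_{AB|CD}>0$ and $\theta_{AB|CD}+\theta_{BC|AD}=1$. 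So everything reduces to proving
\[
P^{AB|CD}_{(2,1,1)}\,P^{BC|AD}_{(2,2)}\,\unit{2}^{\ot 4}\neq 0 .
\]

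First I would simplify the symmetric-subspace factors. The vector $\unit{2}^{\ot 4}$ lies in the symmetric subspace of $(V_{ABCD})^{\ot 4}$. By \cref{lem:complementaryProjectionsOnSymmetric}, $P^{V_{BC}}_{(2,2)}$ commutes with $P^{V_{[4]}}_{(4)}$, so $\xi\coloneqq P^{BC|AD}_{(2,2)}\unit{2}^{\ot 4}=P^{V_{BC}}_{(2,2)}\unit{2}^{\ot 4}$ is again symmetric. Consequently the remaining condition is simply $P^{V_{AB}}_{(2,1,1)}\xi\neq0$.

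This is the conceptual point of the claim. The $AB$-marginal of $\unit{2}$ has rank $2$, so $P^{V_{AB}}_{(2,1,1)}\unit{2}^{\ot4}=0$ by itself. However, projecting onto the $(2,2)$-isotypic component on $BC$ entangles the copies, and the $AB$-support of $\xi$ can then exceed two rows. Note also that $\xi\neq0$, since the $BC$-flattening of $\unit{2}$ has rank $2$ and $(2,2)$ is the rectangular diagram attained for such a tensor.

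To verify $P^{V_{AB}}_{(2,1,1)}\xi\ne0$ concretely, I would write $\unit{2}^{\ot4}=\sum_{x\in\{1,2\}^4} e_x^{A}\ot e_x^{B}\ot e_x^{C}\ot e_x^{D}$, with $e_x=e_{x_1}\ot\dotsb\ot e_{x_4}$. I would then apply the character formula \cref{eq:proj-restricted-char-form}. On $(\C^2\ot\C^2)^{\ot4}$, the $(2,2)$-projector acting on $BC$ reduces to an explicit combination of permutations acting simultaneously on the $B$ and $C$ copies. The cleanest test is to compute the scalar
\[
\langle w, P^{V_{AB}}_{(2,1,1)}\xi\rangle=\langle P^{V_{AB}}_{(2,1,1)}w,\ \xi\rangle
\]
for a well-chosen $w$. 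A natural candidate is $w=y\ot e_{(1,1,2,2)}^{C}\ot e_{(1,1,2,2)}^{D}$, where $y$ is a $(2,1,1)$-highest-weight vector in $(\C^2\ot\C^2)^{\ot4}$ on $AB$, built from antisymmetrizing three copies over $e_1\ot e_1$, $e_1\ot e_2$ and $e_2\ot e_1$. This turns the problem into a finite sum over $S_4\times S_4$ weighted by characters. Alternatively, I would compute the norm $\norm{P^{V_{AB}}_{(2,1,1)}P^{V_{BC}}_{(2,2)}\unit{2}^{\ot4}}^2$ directly; the space involved has dimension $2^{16}$, which is a small finite check.

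The main obstacle is this nonvanishing computation. Getting a nonzero number by hand requires a judicious choice of $w$ so that most terms cancel, and there is some risk that $n=4$ happens to vanish. If it does, my fallback is to scale up: take $n=4m$ with $\lambda_{AB|CD}=(2m,m,m)$ and $\lambda_{BC|AD}=(2m,2m)$. Here I would argue via a highest-weight vector computation that the relevant weight space of $\xi$ has a nonzero component in the $\mathbb S_{(2m,m,m)}(V_{AB})$ isotypic part. Only the normalized diagrams matter, so any $m$ that works gives the same bound.
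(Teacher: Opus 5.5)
Your witness ($n=4$, $(2,1,1)$ on $AB|CD$, $(2,2)$ on $BC|AD$), the entropy values, and the reduction to $P^{V_{AB}}_{(2,1,1)}\xi\neq0$ with $\xi=P^{V_{BC}}_{(2,2)}\unit{2}^{\ot4}$ are exactly the paper's, and they are correct. The paper then simply asserts the nonvanishing from the character formula. That nonvanishing is the whole content of the claim, and your proposal leaves it open. Worse, the test you single out provably returns $0$.

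Since $\tau_{BC}(\pi)$ permutes the $B$- and $C$-copies together, one has
\[
\xi=\sum_{x,x'\in\{1,2\}^4} M_{x,x'}\,\bigl(e^A_x\ot e^B_{x'}\bigr)\ot\bigl(e^C_{x'}\ot e^D_x\bigr),\qquad M_{x,x'}=\langle e_{x'},P_{(2,2)}e_x\rangle,
\]
where $P_{(2,2)}$ is the isotypic projector on $(\C^2)^{\ot4}$. Because $\mathbb S_{(2,2)}(\C^2)$ is one-dimensional of weight $(2,2)$, $M_{x,x'}\neq0$ forces both $x$ and $x'$ to contain two $1$'s and two $2$'s.

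Your highest-weight vector uses $e_1\ot e_1$ and never $e_2\ot e_2$. So its $A$- and $B$-contents cannot both be of this type. Since $P^{V_{AB}}_{(2,1,1)}$ preserves $A$- and $B$-weights, that vector is orthogonal to every $AB$-component of $P^{V_{AB}}_{(2,1,1)}\xi$.

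Independently, the factor $e^C_{1122}\ot e^D_{1122}$ selects only the $AB$-vector $(e_1\ot e_1)^{\ot2}\ot(e_2\ot e_2)^{\ot2}$. Its $S_4$-span is a permutation module of type $(2,2)$, which has no $(2,1,1)$-component. So your $w$ gives $0$, which proves nothing but would likely push you toward the unproven $n=4m$ fallback. The brute-force norm computation you mention would succeed, but as written it is a plan, not an argument.

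The missing step is short once $\xi$ is written this way. The vectors $e^C_{x'}\ot e^D_x$ are orthonormal for distinct pairs $(x,x')$, so
\[
\norm{P^{V_{AB}}_{(2,1,1)}\xi}^2=\sum_{x,x'}\abs{M_{x,x'}}^2\,\norm{P^{V_{AB}}_{(2,1,1)}\bigl(e^A_x\ot e^B_{x'}\bigr)}^2,
\]
and one pair with both factors nonzero suffices. Take $x=(1,1,2,2)$ and $x'=(1,2,1,2)$. The four permutations carrying $x$ to $x'$ are a transposition, two $3$-cycles and a $4$-cycle, so $M_{x,x'}=\tfrac{2}{24}(0-1-1+0)=-\tfrac16$.

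Moreover, $e^A_x\ot e^B_{x'}=(e_1\ot e_1)\ot(e_1\ot e_2)\ot(e_2\ot e_1)\ot(e_2\ot e_2)$ has four distinct basis vectors of $V_A\ot V_B$ in its copies. Its $S_4$-orbit therefore spans a copy of the regular representation, on which this unit vector $v$ satisfies $\norm{P_{(2,1,1)}v}^2=\dim[(2,1,1)]^2/4!=3/8$. Hence $\norm{P^{V_{AB}}_{(2,1,1)}\xi}^2\geq\tfrac1{36}\cdot\tfrac38>0$, and the claim follows.

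In test-vector language, a valid witness lives in the $(1,1,1,1)$-weight space of $\mathbb S_{(2,1,1)}(V_A\ot V_B)$, not in its highest-weight space.
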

\begin{proof}
    The lower bound follows from a particular pair of Young diagrams $\lambda_{AB|CD} = \vcenter{\hbox{\ydiagram{2,1,1}}}$ and $\lambda_{BC|AD} = \vcenter{\hbox{\ydiagram{2,2}}}$ such that $P^{{AB|CD}}_{\lambda_{AB|CD}}P^{BC|AD}_{\lambda_{BC|AD}} \psi^{\ot n} \neq 0$.
    Using the character formula \cref{eq:proj-restricted-char-form} for $P^{AB|CD}_{\ydiagram{2,1,1}}$ and $P^{BC|AD}_{\ydiagram{2,2}}$,
    we conclude the following does not vanish:
    \[ P^{AB|CD}_{\ydiagram{2,1,1}}P^{BC|AD}_{\ydiagram{2,2}}\unit{2}^{\otimes 4} \neq 0.
    \qedhere
    \]
    As the entropies of the normalizations of $\vcenter{\hbox{\ydiagram{2,1,1}}}$ and $\vcenter{\hbox{\ydiagram{2,2}}}$ are respectively $H((\frac{2}{4},\frac{1}{4},\frac{1}{4})) = \frac{2}{4}\log_2 \frac{4}{2} + 2 \cdot\frac{1}{4}\log_2 \frac{4}{1} = \frac{3}{2}$ and $H((\frac{2}{4},\frac{2}{4})) = 2 \cdot \frac{2}{4} \log_2 \frac{4}{2} = 1$, the claim follows.
\end{proof}
\begin{remark}
    Note that the ordering of the projection operators one considers in defining the upper functional is crucial here because $P^{AB|CD}_{\ydiagram{2,1,1}}\unit{2}^{\otimes 4} = 0$ (as the $AB|CD$ flattening rank of $\unit{2}$ is $2$), and thus
    \[        P^{BC|AD}_{\ydiagram{2,2}}P^{AB|CD}_{\ydiagram{2,1,1}}\unit{2}^{\otimes 4} = 0, \qquad
    P^{AB|CD}_{\ydiagram{2,1,1}}P^{BC|AD}_{\ydiagram{2,2}}\unit{2}^{\otimes 4} \neq 0.
    \]
\end{remark}

\bibliographystyle{alphaurl}
\bibliography{ref}

\end{document}